\title{Clifford Tori and the singularly perturbed Cahn-Hilliard equation}
\date{September 03,2015}
\author{Matteo Rizzi\thanks{mrizzi@sissa.it}}
\affil{SISSA, via bonomea 265, 34136, Trieste, Italy.}
\newcommand{\N}{\mathbb{N}}
\newcommand{\R}{\mathbb{R}}
\newcommand{\tr}{\text{tr}}
\newtheorem{theorem}{Theorem}
\newtheorem{proposition}[theorem]{Proposition}
\newtheorem{lemma}[theorem]{Lemma}
\newtheorem{remark}[theorem]{Remark}
\begin{document}

\maketitle

\begin{abstract}
In this paper we construct entire solutions $u_{\varepsilon}$ to the Cahn-Hilliard equation
$-\varepsilon^{2}\Delta(-\varepsilon^{2}\Delta u+W^{'}(u))+W^{''}(u)(-\varepsilon^{2}\Delta u+W^{'}(u))=0$, 
under the volume constraint $\int_{\R^{3}}(1-u_{\varepsilon})dx=4\sqrt{2}\pi^{2}$, whose nodal set approaches the Clifford Torus, that is the Torus with radii of ratio $1/\sqrt{2}$ embedded in $\R^{3}$, as $\varepsilon\to 0$. What is crucial is that the Clifford Torus is a Willmore hypersurface and it is non-degenerate, up to conformal transformations. The proof is based on the Lyapunov-Schmidt reduction and on careful geometric expansions of the laplacian.
\end{abstract}
\textbf{Keywords}: Lyapunov-Schmidt reduction; Cahn-Hilliard equation; Willmore surface; Clifford Torus.
\tableofcontents
\section{Introduction}
The Allen-Cahn equation 
\begin{eqnarray}
-\varepsilon^{2}\Delta u=u-u^{3},
\label{allen-cahn} 
\end{eqnarray}
arises in several physical contexts, such as the study of the stable configurations of two different fluids confined in a bounded container $\Omega$. If $u(x)$ is the density of one of the two fluids at a point $x\in\Omega$ and the energy per unit volume is given by a function $W$ of $u$, it looks reasonable to obtain stable configurations by minimizing the energy functional
\begin{eqnarray}\notag
E(u)=\int_{\Omega}W(u)dx
\end{eqnarray}
among all distributions fulfilling the volume constraint
\begin{eqnarray}
\int_{\Omega}udx=m.
\label{constraint}
\end{eqnarray}
If, for instance, $W(u)=(1-u^{2})^{2}$, and $m\in(-1,1)$, any piecewise constant function taking only the values $\pm 1$ and satisfying  (\ref{constraint}) is a minimizer, irrespectively of the shape of the interface. Therefore this model is unsatisfactory, since it is very far from the reasonable physical assumption that the interfaces are area minimizers, so one replaces the energy by
\begin{eqnarray}\notag
E_{\varepsilon}(u)=\int_{\Omega}\bigg(\frac{\varepsilon}{2}|\nabla u|^{2}+\frac{(1-u^{2})^{2}}{4\varepsilon}\bigg)dx.
\end{eqnarray}
We can see that there is a competition between the potential energy, that forces $u$ to be close to $\pm 1$, and the gradient term that penalizes the phase transition. By minimizing this functional, we are looking for the physical interfaces in which the phase transition can occur. 

The minimizers $u_{\varepsilon}$ of $E_{\varepsilon}$ are solutions to the Euler Lagrange equation, that is (\ref{allen-cahn}). In order to see if the interfaces are actually minimal surfaces, it is interesting to study the asymptotic behaviour of the level sets $\{u_{\varepsilon}=c\}$ as the parameter $\varepsilon\to 0$. It is useful to exploit the variational structure of the problem.
It was shown by Modica and Mortola that the energy $E_{\varepsilon}$, seen as a functional on $L^{1}(\Omega)$ and extended to be $+\infty$ when the integrand is not an $L^{1}$ function, $\Gamma-$converges to the functional
\begin{eqnarray}\notag
E(u)=
\begin{cases}
cPer_{\Omega}(\{u=1\})&\text{if $u=\pm 1$ a.e. in }\Omega\\\notag
+\infty &\text{otherwise in }L^{1}(\Omega)
\end{cases}
\end{eqnarray}
in the strong topology of $L^{1}(\Omega)$ (see \cite{MM}), where $c>0$ is a suitable constant. 

Moreover, Modica showed that, if $u_{\varepsilon}$ are minimizers of $F_{\varepsilon}$ under the volume constraint
\begin{eqnarray}\notag
\int_{\Omega}u_{\varepsilon}dx=m,
\end{eqnarray} 
for some $m\in(-1,1)$, then there exists a sequence $\varepsilon_{k}\to 0$ such that $u_{\varepsilon_{k}}$ converges to some function $u$ in $L^{1}(\Omega)$ (see proposition $3$ of \cite{Mo}). Furthermore, Theorem $1$ of \cite{Mo} asserts that $u=\pm 1$ a. e. in $\Omega$, and the set
\begin{eqnarray}\notag
E=\{x\in\Omega:u(x)=1\}
\end{eqnarray}
is actually a perimeter minimizer between all the subsets $F\subset\Omega$ satisfying the volume constraint
\begin{eqnarray}\notag
|F|=\frac{|\Omega|+m}{2}.
\end{eqnarray}
Further results about the relation between the minimizers of $E_{\varepsilon}$ and the minimizers of the perimeter can be found in \cite{Mo} and in \cite{CS}, where Choksi and Sternberg also described the relation between phase transition theory and the study of a certain kind of polymers.

Conversely, it is an interesting problem to understand if any minimal hypersurface can be achieved as the limit of nodal sets of minimizers of the Ginzburg-Landau energy $E_{\varepsilon}$. 

The first result in this direction is due to Kohn and Sternberg (see \cite{KS}). They considered a smooth bounded domain $\Omega\subset\R^{2}$ and, as an interface, a disjoint union of segments $l_{i}$ meeting the boundary $\partial\Omega$ orthogonally. They defined $u_{0}$ to be locally constant on $\Omega\backslash\cup_{i}l_{i}$, taking the values $\pm 1$, and constructed a sequence of minimizers $u_{\varepsilon}$ converging to $u_{0}$ in $L^{1}(\Omega)$.

In \cite{PR}, Pacard and Ritoré proved a more general result, that holds true for a larger class of interfaces. They started from a minimal hypersurface $\Sigma$ in a compact Riemannian manifold $M$ and, under suitable assumptions, they showed that it can be achieved as the limit as $\varepsilon\to 0$ of nodal sets (that is $0$-level sets) of solutions $u_{\varepsilon}$ of the rescaled Allen-Cahn equation (\ref{allen-cahn}). These solutions $u_{\varepsilon}$ were constructed with techniques such as fixed point theorems and the Lyapunov-Schmidt reduction, and are not necessarily minimizers. 

As regards the hypersurface $\Sigma$, they imposed some restrictions. They required it to be \textit{admissible}, that is the nodal set of a smooth function $f:M\to\R$. In the sequel, we will set
\begin{eqnarray}\notag
M^{+}(\Sigma)=\{p\in M:f(p)>0\}&\text{and }M^{-}(\Sigma)=\{p\in M:f(p)<0\}.
\end{eqnarray}
Moreover, $\Sigma$ has to be \textit{non-degenerate}. 
In order to explain the notion of non-degeneracy, let us give the variational characterization of minimal hypersurfaces. A hypersurface $\Sigma$ in a compact Riemannian manifold $M$ is said to be minimal if it is a minimizer for the area functional, whose critical points are characterized by the Euler equation $H=0$, where $H$ denotes the mean curvature of $\Sigma$. In the sequel, the mean curvature $H$ of a hypersurface $\Sigma$ embedded in $\R^{N}$ will always be
\begin{eqnarray}\notag
H=k_{1}+\dots+k_{N-1},
\end{eqnarray}
where the $k_{j}$'s are the principal curvatures.

The second variation of the area functional is given by
\begin{eqnarray}\notag
A^{''}(\Sigma)[\phi,\psi]=\int_{\Sigma} L_{0}\phi(y)\psi(y)d\sigma(y),
\end{eqnarray} 
where the self-adjoint operator
\begin{eqnarray}\notag
L_{0}\phi=-\Delta_{\Sigma}\phi-|A|^{2}\phi
\end{eqnarray}
is called the Jacobi operator of $\Sigma$ and 
\begin{eqnarray}\notag
|A|^{2}=k_{1}^{2}+\dots+k_{N-1}^{2}
\end{eqnarray}
is the squared norm of its second fundamental form. By definition, a minimal hypersurface $\Sigma$ is said to be non-degenerate if its Jacobi operator
\begin{eqnarray}\notag
L_{0}:C^{2,\alpha}(\Sigma)\to C^{0,\alpha}(\Sigma)
\end{eqnarray}
is an isomorphism. For an introduction to these topics, see also \cite{dPW}.

Moreover, the results in \cite{PR} hold even if the potential $W(t)=(1-t^{2})^{2}/4$ is replaced by a more general double-well potential, that is a smooth function $W$ such that
\begin{eqnarray}
\begin{cases}
W(t)\geq 0 &\text{for any }t,\\
W(t)=0 &\text{if and only if }t=\pm 1,\\
W^{''}(\pm 1)>0.
\end{cases}
\label{double-well}
\end{eqnarray}
To sum up, they proved the following Theorem.
\begin{theorem}[\cite{PR}]
Let $W$ be as in (\ref{double-well}). Let $\Sigma$ be an admissible non-degenerate minimal hypersurface in a compact Riemennian manifold $M$. Then there exists $\varepsilon_{0}>0$ such that for any $0<\varepsilon<\varepsilon_{0}$ there exists a solution $u_{\varepsilon}$ to the rescaled Allen-Cahn equation 
\begin{eqnarray}\notag
-\varepsilon^{2}\Delta u_{\varepsilon}+W^{'}(u_{\varepsilon})=0
\end{eqnarray}
such that $u_{\varepsilon}\to\pm 1$ on compact subsets of $M^{\pm}(\Sigma)$.
\label{thpr}
\end{theorem}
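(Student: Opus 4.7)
The plan is to carry out a Lyapunov-Schmidt reduction around the one-dimensional heteroclinic $u_0$ of $-u_0''+W'(u_0)=0$, $u_0(\pm\infty)=\pm 1$, placed along $\Sigma$. I would first introduce Fermi coordinates $(y,z)$ in a tubular neighborhood $U$ of $\Sigma$, writing any $x\in U$ as $x=\exp_y(z\nu(y))$ with $y\in\Sigma$, $\nu$ a unit normal and $|z|<\delta$. In these coordinates the Laplace-Beltrami operator splits as $\Delta=\partial_z^2-H(y,z)\,\partial_z+\Delta_{\Sigma_z}$, where $\Sigma_z$ is the parallel hypersurface. Because $\Sigma$ is minimal, the Taylor expansion of $H$ at $z=0$ reads $H(y,z)=z\bigl(|A|^2(y)+\mathrm{Ric}_M(\nu,\nu)\bigr)+O(z^2)$, which is exactly what will bring the Jacobi operator $L_0$ into the reduced problem.

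As an approximate solution I would take $U_{\varepsilon,h}(x)=u_0\bigl(z/\varepsilon-h(y)\bigr)$ in $U$, with $h\in C^{2,\alpha}(\Sigma)$ a small unknown, smoothly glued to the constants $\pm 1$ on $M^{\pm}(\Sigma)$ by using the defining function $f$ of $\Sigma$ to build the cutoff (this is where admissibility enters). A direct computation using the expansion of $H$ and the ODE for $u_0$ shows that the error $S_\varepsilon(U_{\varepsilon,h}):=-\varepsilon^2\Delta U_{\varepsilon,h}+W'(U_{\varepsilon,h})$ has a leading order whose $L^2$-projection in $t=z/\varepsilon-h(y)$ against $u_0'(t)$ is a nonzero constant multiple of $\varepsilon^2 L_0 h(y)$, the remaining terms being either of higher order in $\varepsilon$ or exponentially small in $1/\varepsilon$ coming from the cutoff. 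Writing $u_\varepsilon=U_{\varepsilon,h}+\phi$ and imposing the orthogonality condition $\int\phi(y,\varepsilon(t+h(y)))\,u_0'(t)\,dt=0$ for every $y\in\Sigma$, I would solve the orthogonal equation for $\phi$ by a contraction argument. The linearized operator $L_\varepsilon\phi=-\varepsilon^2\Delta\phi+W''(U_{\varepsilon,h})\phi$ is a small perturbation of the one-dimensional operator $-\partial_t^2+W''(u_0)$, whose kernel is spanned by $u_0'$ and which is coercive on its orthogonal complement. In weighted H\"older spaces encoding the exponential decay in $t$ this gives the uniform invertibility of $L_\varepsilon$ on the orthogonal complement and hence a solution $\phi=\phi(h)$ of size $O(\varepsilon^2)$, depending smoothly on $h$.

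Finally I would project the full equation onto $u_0'$, obtaining a scalar equation on $\Sigma$ of the form $L_0 h=\mathcal{G}(h,\varepsilon)$, with $\mathcal{G}(\cdot,\varepsilon)$ small as $\varepsilon\to 0$ and with small Lipschitz constant in $h$. The non-degeneracy hypothesis on $\Sigma$, that is the invertibility of $L_0:C^{2,\alpha}(\Sigma)\to C^{0,\alpha}(\Sigma)$, allows a second fixed-point argument that produces $h=h(\varepsilon)\to 0$ and thus a genuine solution $u_\varepsilon$ of the rescaled Allen-Cahn equation whose nodal set lies at distance $O(\varepsilon)$ from $\Sigma$; the limit $u_\varepsilon\to\pm 1$ on compact subsets of $M^{\pm}(\Sigma)$ then follows directly from the profile of $u_0$. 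I expect the hardest step to be the global set-up on $M$: patching the inner expansion in $U$ with the constants outside without spoiling the orthogonality condition, controlling the exponentially small errors produced by the cutoff, and verifying that $L_\varepsilon$ is uniformly invertible on the orthogonal complement when considered globally on $M$ rather than only on the tubular neighborhood.
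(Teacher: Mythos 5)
Your outline is correct and follows essentially the same scheme the paper attributes to Pacard--Ritor\'e and itself adapts to the fourth-order problem: Fermi coordinates, an approximate solution $u_0\big(z/\varepsilon-h(y)\big)$ glued to $\pm 1$ via the admissibility function, an auxiliary equation solved by a fixed point on the orthogonal complement of $u_0'$, and a bifurcation equation reduced at order $\varepsilon^{2}$ to the Jacobi operator $L_{0}$, whose invertibility (non-degeneracy) closes the argument. Note that the paper itself only cites \cite{PR} for this statement, so your sketch is compared here with the method it describes and reuses rather than with a written-out proof in the text.
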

Anyway, despite several results lead to think that, in some sense, the nodal sets of the solutions to the Allen-Cahn equation resemble minimal surfaces, there are also solutions for which the nodal set is far from being minimal. For instance, Agudelo, Del Pino and Wei constructed axially symmetric solutions $u=u(|x^{'}|,x_{3})$ in $\R^{3}$ such that the components of the nodal set, for $|x^{'}|$ large enough, look like a catenoid (see \cite{ADpW}).

The Lyapunov-Schmidt reduction was also applied to the non compact case, to construct entire solutions to the Allen-Cahn equation in $\R^{9}$ that are monotone in one variable but not one-dimensional, since their nodal set resembles the Bombieri-De Giorgi-Giusti graph, that is a minimal graph over $\R^{8}$ that is not affine (see \cite{BDG},\cite{dPKW}). This solutions are related to a famous conjecture of De Giorgi, that asserts that, at least for $N\leq 8$, any entire bounded solution $|u|<1$ to the Allen-Cahn equation
\begin{eqnarray}\notag
-\Delta u=u-u^{3}
\end{eqnarray}
satisfying $\partial_{N}u>0$ in the whole $\R^{N}$ must be one-dimensional, that is it must depend just on one euclidean variable, in other words $u(x)=u(<a,x>)$, for some unit vector $a\in S^{N-1}$. The result by Del Pino, Kowalczyk and Wei shows that de Giorgi's conjecture is sharp about the upper bound on the dimension. Up to now it is known that the conjecture is true in dimension $N=2$ (see \cite{GG},\cite{FSV}) and $N=3$ (see \cite{AC},\cite{FSV}). The conjecture is still open in dimension $4\leq N\leq 8$, although notable progress was made by Savin (see \cite{S}), that proved that the conjecture is true in dimension $4\leq N\leq 8$ under the reasonable assumption that, for any $x^{'}\in\R^{N-1}$,
\begin{eqnarray}\notag
\lim_{x_{N}\to\pm\infty}u(x^{'},x_{N})=\pm 1,
\end{eqnarray}
that yields that these solutions are minimizers of the energy
\begin{eqnarray}\notag
E(u)=\int_{\R^{3}}\big(\frac{1}{2}|\nabla u|^{2}+\frac{1}{4}(1-u^{2})^{2}\big)dx.
\end{eqnarray}

We are interested here in analogues of these results for the Cahn-Hilliard equation
\begin{eqnarray}
-\varepsilon^{2}\Delta(-\varepsilon^{2}\Delta u+W^{'}(u))+W^{''}(u)
(-\varepsilon^{2}\Delta u+W^{'}(u))=0,
\label{Cahn-Hilliard}
\end{eqnarray}
with $W$ satisfying (\ref{double-well}). Note that, as in the case of Allen-Cahn, we rescale the equation in order to treat $\Gamma$-convergence. If, for instance, we study the equation in a bounded domain $\Omega\subset\R^{N}$, it is possible to see that it is the Euler equation of the functional
\begin{eqnarray}\notag
\mathcal{W}_{\varepsilon}(u)=
\begin{cases}
\frac{1}{2\varepsilon}\int_{\Omega}\big(
\varepsilon\Delta u-\frac{W^{'}(u)}{\varepsilon}\big)^{2}dx &\text{ if $u\in L^{1}(\Omega)\cap H^{2}(\Omega)$}\\\notag
+\infty &\text{otherwise in $L^{1}(\Omega)$.}
\end{cases}
\end{eqnarray}
As in the case of the functionals $E_{\varepsilon}$ related to the Allen-Cahn equation, some $\Gamma-$convergence results are known about $\mathcal{W}_{\varepsilon}$. More precisely, the asymptotic behaviour of $\mathcal{W}_{\varepsilon}$ as $\varepsilon\to 0$ is related to the Willmore functional
\begin{eqnarray}\notag
\mathcal{W}(u)=
c\int_{\partial E\cap\Omega}H_{\partial E}^{2}(y)d\mathcal{H}^{N-1},
\end{eqnarray}
where $E=\{u=1\}$, if $u=\pm 1$ a. e., defined when the interface $\partial E$ is smooth enough. The nodal sets of the critical points $u$ of $\mathcal{W}$ are called \textit{Willmore hypersurfaces}. The Euler equation satisfied by this kind of hypersurfaces is
\begin{eqnarray}\notag
-\Delta_{\Sigma}H=\frac{1}{2}H^{3}-2HK,
\label{willmoreeq}
\end{eqnarray}
where $H$ is the mean curvature and $K$ is the Gauss curvature of $\Sigma=\partial E$. In the sequel, the Gauss curvature $K$ of hypersurface $\Sigma$ embedded in $\R^{N}$ will always be
\begin{eqnarray}\notag
K=k_{1}\dots k_{N-1}.
\end{eqnarray} 
An equivalent form of the Willmore equation is
\begin{eqnarray}
-\Delta_{\Sigma}H+\frac{1}{2}H(H^{2}-2|A|^{2})=0.
\end{eqnarray}
The Willmore functional arises naturally in general relativity, since it is related to the Hawking mass, that is
\begin{eqnarray}\notag
m_{H}(\Sigma)=\sqrt{\frac{Area(\Sigma)}{16\pi}}(1-\frac{1}{16\pi}\mathcal{W}(\Sigma)).
\end{eqnarray}
Here $\Sigma$ can be interpreted as the surface of a body whose mass has to be measured. Furthermore, this functional is also appears in biology, under the name of \textit{Helfrich energy}, and it is used to describe the behaviour of some lipid bilayer cell membranes. For further details and references, we suggest to see \cite{LMS,IMM1,IMM2}. 
\\

In \cite{BP} Bellettini and Paolini proved the $\Gamma-\limsup$ inequality for smooth Willmore hypersurfaces, while the $\Gamma-\liminf$ inequality is much harder to prove. Up to now it has been proved in dimension $N=2,3$ by Röger and Schätzle in \cite{RS}, and, independently, in dimension $N=2$, by Nagase and Tonegawa in \cite{NT}. The problem is still open in higher dimension, while it is known that the approximation does not hold, in general, for non smooth sets, even in dimension $N=2$.
\\

In view of these $\Gamma-$convergence results that establish a link between the Cahn-Hilliard functional and the Willmore functional, it is interesting to see if also the above counter-part is true. In other words, we try to answer the following question: given a Willmore hypersurface $\Sigma$, is it possible to construct a sequence of solutions $u_{\varepsilon}$ of the Cahn-Hilliard equation (\ref{Cahn-Hilliard}) whose nodal sets approach $\Sigma$ as $\varepsilon\to 0$? In the paper, we show that this result holds true if, for instance, $\Sigma$ is the standard Clifford Torus, that is the zero level set of the function
\begin{eqnarray}
f(x)=\Big(\sqrt{2}+\sqrt{x_{1}^{2}+x_{2}^{2}}\Big)^{2}+x_{3}^{2}-1.
\label{def_f}
\end{eqnarray}
It has been recently proved in \cite{MN} that the Clifford Torus is the unique minimizer of the Willmore energy (up to confromal transformations) among surfaces of genus greater or equal than $1$.

It is interesting to see that it is possible to construct these solutions in such a way that they respect the symmetries of the Torus, that is the symmetry with respect to the $x_{1}x_{2}$-plane and with respect to any rotation that fixes the $x_{3}$-axis. 
\begin{theorem}
Let $W$ be an even double-well potential satisfying (\ref{double-well}). Let $\Sigma$ be the Clifford Torus. Then there exists $\varepsilon_{0}$ such that for any $0<\varepsilon<\varepsilon_{0}$ there exists a solution $u_{\varepsilon}$ to (\ref{Cahn-Hilliard}) satisfying the volume constraint 
\begin{eqnarray}
\int_{\R^{3}}(1-u_{\varepsilon})dx=4\sqrt{2}\pi^{2},
\label{constraint_vol}
\end{eqnarray} 
with $u_{\varepsilon}\to\pm 1$ and $\partial_{k}u_{\varepsilon}\to 0$ uniformly on compact subsets of $\Sigma^{\pm}$, for $1\leq k\leq 4$. Moreover, $u_{\varepsilon}(x_{1},x_{2},x_{3})=u_{\varepsilon}(x_{1},x_{2},-x_{3})$ and $u_{\varepsilon}(x)=u_{\varepsilon}(Rx)$, for any $x=(x_{1},x_{2},x_{3})\in\R^{3}$ and for any rotation $R\in SO(3)$ such that $R(0,0,1)=(0,0,1)$.
\label{mainth}
\end{theorem}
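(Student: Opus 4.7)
The plan is a Lyapunov-Schmidt reduction in the spirit of \cite{PR}, adapted to the fourth-order nature of the Cahn-Hilliard equation and to the presence of the volume constraint. Let $\mathbb{H}\colon\R\to(-1,1)$ be the one-dimensional heteroclinic solution of $-\mathbb{H}''+W'(\mathbb{H})=0$ with $\mathbb{H}(\pm\infty)=\pm 1$, and let $\Sigma$ denote the Clifford Torus. For a small smooth function $h\colon\Sigma\to\R$ in the symmetric class prescribed by the statement, let $\Sigma_{h}=\{y+h(y)\nu(y):y\in\Sigma\}$ be the normal graph over $\Sigma$, and introduce Fermi coordinates $(y,t)$ in a tubular neighbourhood of $\Sigma_{h}$, with $t$ the signed normal distance. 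I would take as approximate solution
\begin{equation*}
u_{0}(x)=\mathbb{H}(t/\varepsilon)+\varepsilon\phi_{1}(y,t/\varepsilon)+\varepsilon^{2}\phi_{2}(y,t/\varepsilon)+\ldots
\end{equation*}
near $\Sigma_{h}$, glued smoothly to the constants $\pm 1$ outside a tubular neighbourhood. The corrections $\phi_{j}$ are determined recursively by solving linear ODEs in the stretched variable $s=t/\varepsilon$, whose solvability (via Fredholm alternative against $\mathbb{H}'$) imposes, at each order, conditions on the geometry of $\Sigma_{h}$ involving the mean curvature $H$, the Gauss curvature $K$, and their covariant derivatives.

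The Cahn-Hilliard operator is then expanded in these Fermi coordinates using the formula $\Delta=\partial_{tt}-H(t,y)\partial_{t}+\Delta_{\Sigma_{t}}$, applied twice because of the fourth-order character. It is convenient to introduce the auxiliary unknown $v=-\varepsilon^{2}\Delta u+W'(u)$ so that the equation becomes a coupled elliptic system for $(u,v)$, analyzable order by order in $\varepsilon$. Plugging $u_{0}$ into this system and projecting the resulting residual onto the approximate kernel $\mathbb{H}'(t/\varepsilon)$ yields a hierarchy of solvability conditions whose leading term is precisely the Willmore equation for the moving interface $\Sigma_{h}$; since $\Sigma$ is itself a Willmore surface, the first obstruction vanishes automatically, and the next-order condition takes the form $\mathcal{J}_{W}h=g_{\varepsilon}$ with $g_{\varepsilon}=O(\varepsilon)$, where $\mathcal{J}_{W}$ is the Jacobi operator of the Willmore functional at $\Sigma$. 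The infinite-dimensional piece of the equation (the component orthogonal to $\mathbb{H}'$) is then solved by a contraction mapping argument in weighted H\"older spaces of class $C^{4,\alpha}$, in which the linearized fourth-order operator is uniformly invertible with controlled norm as $\varepsilon\to 0$; here the fourth order forces higher regularity than in the Allen-Cahn case of Theorem \ref{thpr}.

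The main obstacle is the lack of invertibility of $\mathcal{J}_{W}$ in full generality: the Clifford Torus is non-degenerate only modulo conformal transformations, so $\mathcal{J}_{W}$ possesses a nontrivial kernel spanned by the infinitesimal generators of the conformal group of $\R^{3}\cup\{\infty\}$ acting on $\Sigma$. The remedy is precisely the symmetry requirement built into the statement: the axial rotations about the $x_{3}$-axis and the reflection $x_{3}\mapsto -x_{3}$ eliminate every conformal mode except the one associated with dilations, and this surviving dilational mode is in turn pinned down by the volume constraint (\ref{constraint_vol}), which selects a Clifford Torus of definite size. Restricted to the symmetric subspace and coupled with a scalar Lagrange multiplier for the volume constraint, the bifurcation equation for $h$ has an invertible linear part and can be solved by the implicit function theorem for $\varepsilon$ small. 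Combining this with the solution of the projected equation produces the desired $u_{\varepsilon}$; the claimed convergence $u_{\varepsilon}\to\pm 1$ and $\partial_{k}u_{\varepsilon}\to 0$ on compact subsets of $\Sigma^{\pm}$ then follows from the exponential decay of $\mathbb{H}$ at $\pm\infty$ together with standard elliptic regularity applied to the remainder, while the stated symmetries of $u_{\varepsilon}$ are preserved throughout the construction since all the function spaces have been chosen inside the symmetric class.
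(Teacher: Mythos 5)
Your overall strategy is the same as the paper's (Lyapunov--Schmidt around the heteroclinic profile in Fermi coordinates over a normal graph of $\Sigma$, the Willmore equation as the leading solvability condition, nondegeneracy modulo conformal maps restored by the imposed symmetries plus the volume constraint, fixed point/IFT in symmetric spaces). The genuine gap is in the single sentence on which the whole bifurcation step rests: that after projecting the residual onto the kernel of the one-dimensional operator the condition reads $\mathcal{J}_{W}h=g_{\varepsilon}$ with $g_{\varepsilon}=O(\varepsilon)$ and (implicitly) with small Lipschitz dependence on $h$. In this fourth-order problem the Willmore Jacobi operator appears only one order in $\varepsilon$ below the Willmore equation itself: in the paper's normalization the projected error of the approximate solution is $-\varepsilon^{4}c_{\star}\tilde{L}_{0}\phi+O(\varepsilon^{5})$ (Proposition \ref{solve_eq_bifo}), while the correction $U$ produced by the projected auxiliary equation is only of size $\varepsilon^{3}$ (Proposition \ref{propaux_2}) and the conjugated linearized operator carries coefficients of size $\varepsilon$ (the terms $\varepsilon H\partial_{t}$, $\varepsilon t\,a_{1}^{ij}\partial_{ij}$, etc.). Hence the interaction term --- the analogue of $\tilde{p}_{4}$ in (\ref{proj_R}) --- is naively of size $\varepsilon^{4}$, i.e.\ exactly the size of the linear term $\varepsilon^{4}\tilde{L}_{0}\phi$, and its Lipschitz constant in $\phi$ is also of order $\varepsilon^{4}$ with a constant that is not small. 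With only these estimates neither the implicit function theorem nor a contraction closes, and one cannot even conclude $\phi=O(\varepsilon)$.

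The paper closes this by a parity argument that your sketch does not contain and which is the real fourth-order novelty: since the only $O(\varepsilon^{3})$ terms of the projected right-hand side are even in the normal variable $t$, the odd part of $U_{\varepsilon,\phi}$ is one order smaller, $O(\varepsilon^{4})$ (Lemma \ref{Lemma_oe} and Proposition \ref{propaux_2}), and the dangerous $O(\varepsilon^{4})$ contributions generated by the even part of $U$ through the order-$\varepsilon$ coefficients are odd in $t$, hence are annihilated by the projection against $v_{\star}^{'}$ (Lemma \ref{lemmaR}); only then is the remainder $O(\varepsilon^{5})$, strictly below the linear term, and the bifurcation equation becomes solvable for $\phi=O(\varepsilon)$ via the isomorphism $(\phi,\lambda)\mapsto(\tilde{L}_{0}\phi+\lambda,\int_{\Sigma}\phi)$. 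Whether your reformulation as a second-order system for $(u,v)$ with $v=-\varepsilon^{2}\Delta u+W^{'}(u)$ reproduces this cancellation is precisely what would need to be checked; as written, the claimed smallness of $g_{\varepsilon}$ is unsupported. A secondary omission: Fermi coordinates exist only in a tubular neighbourhood, so besides gluing the profile to $\pm1$ you must actually solve the equation in the outer region; the paper does this with a separate globally defined unknown $V$ and the uniformly positive potential $\Gamma_{\varepsilon,\phi}$, obtaining exponentially small outer contributions, a step your sketch mentions only implicitly.
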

In the statement of the Theorem, we denoted 
\begin{eqnarray}\notag
\Sigma^{+}=\{x\in\R^{3}:f(x)>0\} &\text{and }\Sigma^{-}=\{x\in\R^{3}:f(x)<0\} 
\end{eqnarray}
This result is a fourth order analogue of Theorem \ref{thpr} by Pacard and Ritoré (see \cite{PR}). The proof is based on the Lyapunov-Schmidt reduction, that is we split equation (\ref{Cahn-Hilliard}) into a system of two equations. The auxiliary equation will be solved by using the spectral decomposition of the linearized Allen-Cahn operator and the bifurcation equation will be solved thanks to the $nondegeneracy$ of the Clifford Torus, up to conformal maps. For a more detailed introduction to the techniques developed in the proof, see section $2$.

In order to explain what we mean by nondegeneracy, we go back to the variational definition of Willmore hypersurface and we consider the second variation of the Willmore functional, that is
\begin{eqnarray}\notag
\mathcal{W}^{''}(\Sigma)[\phi,\psi]=\int_{\Sigma}\tilde{L}_{0}\phi\psi d\sigma,
\end{eqnarray}
where $\tilde{L}_{0}$ is the self-adjoint operator given by
\begin{eqnarray}
\tilde{L}_{0}\phi=L_{0}^{2}\phi+\frac{3}{2}H^{2}L_{0}\phi-H(\nabla_{\Sigma}\phi,\nabla_{\Sigma}H)+2(A\nabla_{\Sigma}\phi,\nabla_{\Sigma}H)+\\\notag
2H<A,\nabla^{2}\phi>+\phi(2<A,\nabla^{2}H>+|\nabla_{\Sigma}H|^{2}+2H\tr A^{3}).
\label{willmore_lin}
\end{eqnarray}
Here we have denoted by $(\cdotp,\cdotp)$ the scalar product induced by the metric $g$ on $\Sigma$, indeed, for instance $(\nabla\phi,\nabla H)=g^{ij}H_{i}\phi_{j}$, and by $<\cdotp,\cdotp>$ the trace of the product of two matrices, so for instance $<A,\nabla^{2}\phi>=A^{ij}\nabla^{2}_{ij}\phi$, and $A^{ij}=g^{ik}g^{jl}A_{kl}$. It is possible to find the explicit computation of the first and the second variation of the Willmore functional $\mathcal{W}$ in \cite{LMS}, section $3$. 
This is the analogue of the Jacobi operator in the case of minimal hypersurfaces. In view of a result by White \cite{Wh}, the Willmore functional is invariant under conformal transformations of the Euclidean space, that is homotheties, isometries and M\"{o}bius transformations, i.e. inversions with respect to spheres. On the other hand, by Corollary $2$, page $34$, of \cite{W}, we know that its second variation is positive definite on the orthogonal complement of the space of conformal transformations, hence the kernel of $\tilde{L}_{0}$ exactly consists of these transformations. 
\begin{remark}
In view of the above discussion, $\tilde{L}_{0}$ is injective if restricted to the space of functions with zero average and fulfilling the symmetries of the Torus, that is the symmetry with respect to the $x_{1}x_{2}$-plane and with respect to all rotations of $\R^{3}$ that fix the $x_{3}$ axis.
\label{remtilde_L_0}
\end{remark}
In fact, by considering just functions with zero average we exclude non trivial homothethies. This constraint is equivalent to prescribe the integral of $1-u_{\varepsilon}$, that is to impose
\begin{eqnarray}\notag
\int_{\R^{3}}(1-u_{\varepsilon})dx=4\sqrt{2}\pi^{2}=2|\Sigma^{+}|_{3},
\end{eqnarray}
where $|\Sigma^{+}|_{3}=2\sqrt{2}\pi^{2}$ is the volume of the interior of the Clifford Torus, that is its $3$-dimensional Lebesgue measure. In principle, a Lagrange multiplier $\lambda_{\varepsilon}$ should appear in our equation: Anyway this will turn out to be $0$ (see Section $7$). By imposing rotational symmetry and symmetry with respect to the plane $x_{1}x_{2}$ we exclude non trivial isometries and M\"{o}bius transformations.\\

\textbf{Acknowledgments} The author is supported by the PRIN project \textit{Variational and perturbative aspects of nonlinear differential problems.} The author is also particularly grateful to F. Mahmoudi, M. Del Pino, M. Kowalckyk and M. Saez for their kind hospitality and for their precious collaboration.

\section{Some useful facts in differential geometry}
For $0<\varepsilon\leq 1$, we define the rescaled Clifford Torus as $\Sigma_{\varepsilon}:=\{\varepsilon^{-1}\zeta:\zeta\in\Sigma\}$. In other words, $\Sigma_{\varepsilon}=\{y\in\R^{3}:f_{\varepsilon}(y)=0\}$, where $f_{\varepsilon}(y):=\varepsilon^{-2}f(\varepsilon y)$ and $f$ is defined in (\ref{def_f}).

For $0<\tau<\sqrt{2}-1$ and $0<\varepsilon\leq 1$, we define the tubular neighbourhood of width $\tau/\varepsilon$ of $\Sigma_{\varepsilon}$ as 
\begin{eqnarray}\notag
V_{\tau/\varepsilon}=\{x\in\R^{3}:dist(x,\Sigma_{\varepsilon})<\tau/\varepsilon\}.
\end{eqnarray}
On this neighbourhood of $\Sigma_{\varepsilon}$, we introduce a new system of coordinates, known as Fermi coordinates. First we define
\begin{eqnarray}\notag
Z_{\varepsilon}:\Sigma_{\varepsilon}\times(-\tau/\varepsilon,\tau/\varepsilon)\to V_{\tau/\varepsilon}
\end{eqnarray} 
by the relation
\begin{eqnarray}
Z_{\varepsilon}(y,z)=\exp_{y}(z\nu(\varepsilon y))=y+z\nu(\varepsilon y),
\label{Fermi_coord}
\end{eqnarray}
where $\nu(\varepsilon y)$ is the outward-pointing unit normal to the original Torus $\Sigma$ at $\varepsilon y$, that coincides with the the outward-pointing unit normal to $\Sigma_{\varepsilon}$ at $y$, and $\exp_{y}$ is the exponential map of $\R^{3}$ at $y$ seen as a point of $\R^{3}$. If $\tau$ is small enough, that is $0<\tau<\sqrt{2}-1$ in the case of the Clifford Torus, $Z_{\varepsilon}$ is a diffeomorphism. In other words, $Z_{\varepsilon}$ is a change of coordinates on $V_{\tau/\varepsilon}$, and 
the coordinates $(y,z)=Z_{\varepsilon}^{-1}(x)$ are known as Fermi coordinates of the rescaled torus $\Sigma_{\varepsilon}$, or stretched Fermi coordinates of the Torus. 
\begin{remark}
Any function $u:V_{\tau/\varepsilon}\to\R$ can be seen as a function of $(y,z)$. More precisely, we can consider the composition $u^{\star}(y,z)=u(Z_{\varepsilon}(y,z))$. In the sequel, with a slight abuse of notation, we will write $u=u(y,z)$.
\label{remFc}
\end{remark}
Let us fix a point $\zeta_{0}\in\Sigma$ and a parametrization onto a neighbourhood $V\subset\Sigma$ of $\zeta_{0}$, that is a smooth function
\begin{eqnarray}\notag
Y:U\to V
\end{eqnarray}
on an open set $U\subset\R^{2}$ such that $Y(\xi_{0})=\zeta_{0}$, for some $\xi_{0}\in U$. Then, setting $U_{\varepsilon}=\varepsilon^{-1}U$ and $V_{\varepsilon}=\varepsilon^{-1}V$, the function
\begin{eqnarray}\notag
Y_{\varepsilon}:U_{\varepsilon}\to V_{\varepsilon}
\end{eqnarray}
given by $Y_{\varepsilon}(\text{y}):=\varepsilon^{-1}Y(\varepsilon\text{y})$ is a parametrization of $\Sigma_{\varepsilon}$. In the sequel, we will denote by y the points in $U_{\varepsilon}$ and by $y=Y_{\varepsilon}(\text{y})$ the points in $V_{\varepsilon}$. For any $|z|<\tau/\varepsilon$, we consider the surface
\begin{eqnarray}
\Sigma_{\varepsilon,z}:=\{y+z\nu(\varepsilon y),y\in\Sigma_{\varepsilon}\}.
\end{eqnarray} 
On this surface, we consider the parametrization
\begin{eqnarray}
X_{\varepsilon}(\text{y},z):=Y_{\varepsilon}(\text{y})+z\nu(\varepsilon Y_{\varepsilon}(\text{y})).
\label{param}
\end{eqnarray}
In particular, $X:=X_{1}$ is a parametrization of $\Sigma_{z}:=\Sigma_{1,z}$, the omothetic surface to $\Sigma$ at distance $z$. It is known that the tangent vectors $\{\partial_{i}X_{\varepsilon}(\text{y},z)\}_{i=1,2}$ constitute a basis of the tangent space $T_{y+z\nu(\varepsilon y)}\Sigma_{\varepsilon,z}$, that will be referred to as the standard basis. We define the coefficients of the metric of $\Sigma_{\varepsilon,z}$ at $y+z\nu(\varepsilon y)$ as follows
\begin{eqnarray}
\tilde{g}_{\varepsilon,ij}(y,z):=<\partial_{i}X_{\varepsilon}(\text{y}),\partial_{j}X_{\varepsilon}(\text{y})>=
\tilde{g}_{ij}(\varepsilon y,\varepsilon z),
\label{def_tg}
\end{eqnarray}
where $<\cdotp,\cdotp>$ denotes the scalar product of $\R^{3}$ and $i,j=1,2$. The Laplacian on $\Sigma_{\varepsilon,z}$ is given by
\begin{eqnarray}
\Delta_{\Sigma_{\varepsilon,z}}=\frac{1}{\sqrt{\det{\tilde{g}_{\varepsilon}(y,z)}}}\partial_{j}
\big(\sqrt{\det{\tilde{g}_{\varepsilon}(y,z)}}\tilde{g}_{\varepsilon}^{ij}(y,z)\partial_{i}\big)=
\tilde{g}_{\varepsilon}^{ij}(y,z)\partial_{ij}+\tilde{b}_{\varepsilon}^{i}(y,z)\partial_{i},
\label{laplaSigmaz}
\end{eqnarray}
where
\begin{eqnarray}
\tilde{b}_{\varepsilon}^{i}(y,z):=\partial_{j}\tilde{g}_{\varepsilon}^{ij}(y,z)+\frac{1}{2}
\partial_{j}\big(\log\det\tilde{g}_{\varepsilon}(y,z)\big)\tilde{g}_{\varepsilon}^{ij}(y,z)
\label{def_ab}
\end{eqnarray}
and $\tilde{g}_{\varepsilon}^{ij}:=(\tilde{g}_{\varepsilon}^{-1})_{ij}$ are the elements of the inverse of the metric. 
These quantities are related to the ones of $\Sigma_{z}$ through the relations
\begin{eqnarray}\notag
\tilde{g}_{\varepsilon}^{ij}(y,z)=\tilde{g}^{ij}(\varepsilon y,\varepsilon z),\\\notag
\tilde{b}_{\varepsilon}^{i}(y,z)=\varepsilon\tilde{b}^{i}(\varepsilon y,\varepsilon z),
\end{eqnarray}
with $\tilde{g}^{ij}:=\tilde{g}_{1}^{ij}$ and $\tilde{b}^{i}:=\tilde{b}_{1}^{i}$. We define the second fundamental form at $y+z\nu(\varepsilon y)\in\Sigma_{\varepsilon,z}$ to be the linear application of the tangent space $T_{y+z\nu(\varepsilon y)}\Sigma_{\varepsilon,z}$ into itself that, in the standard basis $\{\partial_{i}X_{\varepsilon}(\text{y},z)\}_{i=1,2}$, is represented by the matrix
\begin{eqnarray}
\tilde{A}_{\varepsilon,ij}(y,z):=-<\partial_{i}\nu(\varepsilon y),\partial_{j}X_{\varepsilon}(\text{y},z)>.
\label{def_A}
\end{eqnarray}
We introduce the mean curvature $\tilde{H}_{\varepsilon}(y,z)$ of $\Sigma_{\varepsilon,z}$ at $y+z\nu(\varepsilon y)$ as follows
\begin{eqnarray}\notag
\tilde{H}_{\varepsilon}(y,z):=(\tilde{A}_{\varepsilon})^{i}_{i}(y,z)=\tilde{g}_{\varepsilon}^{ij}(y,z)\tilde{A}_{\varepsilon,ij}(y,z).
\end{eqnarray}
In other words
\begin{eqnarray}\notag
\tilde{H}_{\varepsilon}(y,z)=\tilde{k}_{\varepsilon,1}(y,z)+\tilde{k}_{\varepsilon,2}(y,z),
\end{eqnarray}
where $\tilde{k}_{\varepsilon,i}(y,z)$ are the \textit{principal curvatures} of $\Sigma_{\varepsilon,z}$, that is eigenvalues of the matrix $\tilde{g}_{\varepsilon}^{-1}(y,z)\tilde{A}_{\varepsilon}(y,z)$. Therefore we can see that the metric $\tilde{g}_{\varepsilon,ij}(y,z)$ and the matrix representing the second fundamental $\tilde{A}_{\varepsilon,ij}(y,z)$ form depend on the parametrization, while this is not the case for $\tilde{H}_{\varepsilon}(y,z)$. Setting, as above $\tilde{A}_{ij}:=\tilde{A}_{1,ij}$ and $\tilde{H}:=\tilde{H}_{1}$, we have $\tilde{A}_{\varepsilon,ij}(y,z)=\varepsilon\tilde{A}_{ij}(\varepsilon y,\varepsilon z)$ and $\tilde{H}_{\varepsilon}(y,z)=\varepsilon\tilde{H}(\varepsilon y,\varepsilon z)$.
\begin{lemma}
For a function $u:V_{\tau/\varepsilon}\to\R$ of class $C^{2}$, the Laplacian in Fermi coordinates is given by
\begin{eqnarray} 
\Delta u(y,z)=\Delta_{\Sigma_{\varepsilon,z}}u(y,z)-\varepsilon\tilde{H}(\varepsilon y,\varepsilon z)\partial_{z}u(y,z)+\partial_{zz}u(y,z).
\label{laplFermi}
\end{eqnarray}
For the notation, see Remark \ref{remFc}.
\end{lemma}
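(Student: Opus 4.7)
My plan is to recognize the Euclidean Laplacian as the Laplace--Beltrami operator of the pulled-back metric in the Fermi chart $X_{\varepsilon}$ defined in (\ref{param}), and then exploit the block-diagonal structure of this metric to split off the $z$-direction cleanly. Let me denote by $G_{\alpha\beta}(\mathbf{y},z)=\langle\partial_{\alpha}X_{\varepsilon},\partial_{\beta}X_{\varepsilon}\rangle$, for $\alpha,\beta\in\{1,2,z\}$, the coefficients of the pulled-back Euclidean metric on $\Sigma_{\varepsilon}\times(-\tau/\varepsilon,\tau/\varepsilon)$. The tangential block $G_{ij}$ for $i,j\in\{1,2\}$ is $\tilde{g}_{\varepsilon,ij}(y,z)$ by definition (\ref{def_tg}), and $G_{zz}=|\nu(\varepsilon Y_{\varepsilon}(\mathbf{y}))|^{2}=1$.

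The key structural step is showing that the cross terms vanish, i.e.\ $G_{iz}=\langle\partial_{i}X_{\varepsilon},\nu(\varepsilon Y_{\varepsilon})\rangle=0$, which plays the role of the Gauss lemma here. Differentiating (\ref{param}) gives $\partial_{i}X_{\varepsilon}=\partial_{i}Y_{\varepsilon}+z\,\partial_{i}[\nu(\varepsilon Y_{\varepsilon}(\mathbf{y}))]$. The first summand is tangent to $\Sigma_{\varepsilon}$ at $Y_{\varepsilon}(\mathbf{y})$ and hence orthogonal to $\nu$; for the second, differentiating $|\nu|^{2}\equiv 1$ yields $\langle\partial_{i}\nu(\varepsilon y),\nu(\varepsilon y)\rangle=0$. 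Thus $G=\mathrm{diag}(\tilde{g}_{\varepsilon}(y,z),1)$, so $\det G=\det\tilde{g}_{\varepsilon}$ and $G^{zz}=1$.

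Writing $\Delta$ as the Laplace--Beltrami operator of $G$ in coordinates $(\mathbf{y},z)$ and using the block structure,
\begin{eqnarray*}
\Delta u=\frac{1}{\sqrt{\det\tilde{g}_{\varepsilon}}}\partial_{i}\!\left(\sqrt{\det\tilde{g}_{\varepsilon}}\,\tilde{g}_{\varepsilon}^{ij}\partial_{j}u\right)+\frac{1}{\sqrt{\det\tilde{g}_{\varepsilon}}}\partial_{z}\!\left(\sqrt{\det\tilde{g}_{\varepsilon}}\,\partial_{z}u\right).
\end{eqnarray*}
The first summand is precisely $\Delta_{\Sigma_{\varepsilon,z}}u$ by (\ref{laplaSigmaz}), and the second expands as $\partial_{zz}u+\tfrac{1}{2}\partial_{z}(\log\det\tilde{g}_{\varepsilon}(y,z))\,\partial_{z}u$.

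It remains to identify $\tfrac{1}{2}\partial_{z}\log\det\tilde{g}_{\varepsilon}(y,z)$ with $-\varepsilon\tilde{H}(\varepsilon y,\varepsilon z)$. Since $\partial_{z}X_{\varepsilon}=\nu(\varepsilon Y_{\varepsilon}(\mathbf{y}))$ is independent of $z$, we have $\partial_{z}\partial_{i}X_{\varepsilon}=\partial_{i}[\nu(\varepsilon Y_{\varepsilon})]$, so differentiating (\ref{def_tg}) and using (\ref{def_A}) together with symmetry of the second fundamental form gives $\partial_{z}\tilde{g}_{\varepsilon,ij}=-2\tilde{A}_{\varepsilon,ij}(y,z)$. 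Jacobi's formula then yields $\partial_{z}\log\det\tilde{g}_{\varepsilon}=\tilde{g}_{\varepsilon}^{ij}\partial_{z}\tilde{g}_{\varepsilon,ij}=-2\tilde{H}_{\varepsilon}(y,z)=-2\varepsilon\tilde{H}(\varepsilon y,\varepsilon z)$, which closes the argument. There is no real obstacle: the whole lemma rests on the orthogonality $G_{iz}=0$, and the rest is a computation mechanically dictated by the block form of the metric and by the definition of the second fundamental form.
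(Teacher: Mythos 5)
Your proof is correct and follows essentially the same route as the paper: pull back the Euclidean metric in the Fermi chart, observe it is block-diagonal $\mathrm{diag}(\tilde{g}_{\varepsilon},1)$, split the Laplace--Beltrami operator accordingly, and identify $\tfrac{1}{2}\partial_{z}\log\det\tilde{g}_{\varepsilon}=-\varepsilon\tilde{H}(\varepsilon y,\varepsilon z)$. You additionally spell out the two facts the paper only asserts (the vanishing of the cross terms $G_{iz}$ and the derivation of the mean-curvature identity via $\partial_{z}\tilde{g}_{\varepsilon,ij}=-2\tilde{A}_{\varepsilon,ij}$ and Jacobi's formula), which is a welcome but not substantively different completion of the same argument.
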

\begin{proof}
For any $y\in\Sigma_{\varepsilon}$ and $|z|<\tau/\varepsilon$, $\R^{3}$ splits into the direct sum of the tangent space to $\Sigma_{\varepsilon,z}$ and the one dimensional subspace generated by the unit normal $\nu(\varepsilon y)$, that is $\R^{3}=T_{y+z\nu(\varepsilon y)}\Sigma_{\varepsilon,z}+\R$. The vectors $\{\partial_{i}X_{\varepsilon}(y,z),\nu(\varepsilon y)\}_{i=1,2}$ constitute a basis of $\R^{3}=T_{y+z\nu(\varepsilon y)}\R^{3}$. The metric in this basis is given by
\begin{eqnarray}
G_{\varepsilon}(y,z)=\begin{bmatrix}
\tilde{g}_{\varepsilon}(y,z) & 0\\
0 & 1
\end{bmatrix}.
\end{eqnarray}
The inverse is
\begin{eqnarray}
G_{\varepsilon}^{-1}(y,z)=\begin{bmatrix}
\tilde{g}_{\varepsilon}^{-1}(y,z) & 0\\
0 & 1
\end{bmatrix}.
\end{eqnarray}
Here $1\leq I,J\leq 3$ and $1\leq i,j\leq 2$. The laplacian on $\R^{3}$ in the metric $G_{\varepsilon}$ is given by
\begin{eqnarray}\notag
\Delta u=\frac{1}{\sqrt{\det G_{\varepsilon}(y,z)}}\partial_{J}(\sqrt{\det G_{\varepsilon}(y,z)}G_{\varepsilon}^{IJ}(y,z)\partial_{I})=\\\notag
G^{IJ}_{\varepsilon}(y,z)\partial_{IJ}u(y,z)+\partial_{J}G_{\varepsilon}^{IJ}(y,z)\partial_{I}u(y,z)+\frac{1}{2}\partial_{J}(\log\det G_{\varepsilon}(y,z))G_{\varepsilon}^{IJ}(y,z)\partial_{I}u(y,z).
\end{eqnarray}
Thus
\begin{eqnarray}\notag
G_{\varepsilon}^{IJ}(y,z)\partial_{IJ}u(y,z)=\tilde{g}_{\varepsilon}^{ij}(y,z)\partial_{ij}u(y,z)+\partial_{zz}u(y,z)\\\notag
\partial_{J}G_{\varepsilon}^{IJ}(y,z)\partial_{I}u(y,z)=\partial_{j}\tilde{g}_{\varepsilon}^{ij}(y,z)\partial_{i}u(y,z)\\\notag
\frac{1}{2}\partial_{J}(\log\det G_{\varepsilon}(y,z))G_{\varepsilon}^{IJ}(y,z)\partial_{I}u(y,z)=\\\notag
\frac{1}{2}\partial_{j}(\log\det\tilde{g}_{\varepsilon}(y,z))\tilde{g}_{\varepsilon}^{ij}(y,z)\partial_{i}u(y,z)
+\frac{1}{2}\partial_{z}(\log\det\tilde{g}_{\varepsilon}(y,z))\partial_{z}u(y,z).
\end{eqnarray}
To conclude, we point out that
\begin{eqnarray}\notag
\frac{1}{2}\partial_{z}(\log\det\tilde{g}_{\varepsilon}(y,z))=-\tilde{H}_{\varepsilon}(y,z)=-\varepsilon\tilde{H}(\varepsilon y,\varepsilon z).
\end{eqnarray}
\end{proof}
Exploiting the Taylor expansion of $\tilde{H}$ of the mean curvature of a given hypersurface provided by Del Pino, Kowalczyk and Wei (see \cite{dPKW}), we have that
\begin{eqnarray}\
\tilde{H}(\varepsilon y,\varepsilon z)=\sum_{i=1}^{2}\frac{k_{i}(\varepsilon y)}{1-\varepsilon zk_{i}(\varepsilon y)}=\sum_{j\geq 1}(\varepsilon z)^{j-1}H_{j}(\varepsilon y), &\text{ }H_{j}(\varepsilon y):=\sum_{i=1}^{2}k^{j}_{i}(\varepsilon y)
\end{eqnarray}
Here $k_{i}(\varepsilon y):=\tilde{k}_{\varepsilon,i}(y,0)$ are the principal curvatures of the Clifford Torus $\Sigma$ at $\varepsilon y$. Therefore the Taylor expansions of the first and the second derivatives of $\tilde{H}$ are
\begin{eqnarray}
\begin{cases}
\tilde{H}_{z}(\varepsilon y,\varepsilon z)=\sum_{j\geq 1}j(\varepsilon z)^{j-1}H_{j+1}(\varepsilon y),\\
\tilde{H}_{zz}(\varepsilon y,\varepsilon z)=\sum_{j\geq 1}j(j+1)(\varepsilon z)^{j-1}H_{j+2}(\varepsilon y).
\end{cases}
\end{eqnarray}
In the sequel, we will set $H(\varepsilon y):=H_{1}(\varepsilon y)$, $|A(\varepsilon y)|^{2}:=H_{2}(\varepsilon y)$ and $\tr A^{3}(y):=H_{3}(\varepsilon y)$. 

Now we need the Taylor expansion in $\varepsilon z$ of $\Delta_{\Sigma_{\varepsilon,z}}$. For our purposes, it is enough to know the terms of order zero and one, while we also need the term of order two in the expansion of $\tilde{H}$. For this reason, we prefer not to expand the full Laplacian on $\R^{3}$. In fact, an expansion up to order one would not be enough, because we cannot neglect the terms involving $\tr A^{3}$, while an expansion up to order two would be a useless effort, in fact it would involve the terms of order two of $\Delta_{\Sigma_{\varepsilon,z}}$, that will always simplify in our forthcoming calculations. Before stating Next Lemma, we recall that
\begin{eqnarray}
\Delta_{\Sigma_{\varepsilon}}=\frac{1}{\sqrt{\det{g_{\varepsilon}(y)}}}\partial_{j}
\big(\sqrt{\det g_{\varepsilon}(y)}g_{\varepsilon}^{ij}(y)\partial_{i}\big)=
g_{\varepsilon}^{ij}(y)\partial_{ij}+b_{\varepsilon}^{i}(y)\partial_{i},
\end{eqnarray}
where
\begin{eqnarray}
g_{\varepsilon}^{ij}(y):=\tilde{g}_{\varepsilon}^{ij}(y,0)=\tilde{g}^{ij}(\varepsilon y,0)=g^{ij}(\varepsilon y)\\\notag
b_{\varepsilon}^{i}(y):=\tilde{b}_{\varepsilon}^{i}(y,0)=\varepsilon\tilde{b}^{i}(\varepsilon y,0)=\varepsilon b^{i}(\varepsilon y).
\end{eqnarray}
It is possible to find similar computations in \cite{MSY}, where Mahmoudi, Sànchez and Yao treat the more general case of a $k$ dimensional submanifold in an $N$ dimensional manifold.
\begin{lemma}
For a function $u:V_{\tau/\varepsilon}\to\R$ of class $C^{2}$, for any $y\in\Sigma_{\varepsilon}$, for any $|z|\leq\tau/\varepsilon$,
\begin{eqnarray}\notag
\Delta_{\Sigma_{\varepsilon,z}}u=\Delta_{\Sigma_{\varepsilon}}u+\varepsilon z(a_{1}^{ij}(\varepsilon y)\partial_{ij}+\varepsilon b_{1}^{i}(\varepsilon y)\partial_{i})\\\notag
+(\varepsilon z)^{2}(a_{2}^{ij}(\varepsilon y)\partial_{ij}+\varepsilon b_{2}^{i}(\varepsilon y)\partial_{i})
+\overline{a}^{ij}(\varepsilon y,\varepsilon z)\partial_{ij}+\varepsilon\overline{b}_{i}(\varepsilon y,\varepsilon z)\partial_{i},
\end{eqnarray}
where
\begin{eqnarray}\notag
a_{1}^{ij}:=2A^{ij}, b_{1}^{i}:=2\partial_{j}A^{ij}+2\Gamma^{k}_{kj}A^{ij}-g^{ij}H_{j},\\\notag
a_{2}^{ij}:=\frac{1}{2}\partial_{zz}\tilde{g}^{ij}(\varepsilon y,0), b_{2}^{i}:=\frac{1}{2}\partial_{zz}\tilde{b}^{i}(\varepsilon y,0),
\end{eqnarray}
everything evaluated at $\varepsilon y$, and the remainders satisfy $|\overline{a}^{ij}(\varepsilon y,\varepsilon z)|,|\overline{b}^{i}(\varepsilon y,\varepsilon z)|\leq c\varepsilon^{3}|z|^{3}$, for some constant $c>0$ depending on $\Sigma$.
\end{lemma}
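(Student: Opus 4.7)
The plan is to apply Taylor's theorem in the second variable to the coefficients $\tilde{g}^{ij}$ and $\tilde{b}^{i}$ entering \eqref{laplaSigmaz}, exploiting the scaling relations $\tilde{g}^{ij}_{\varepsilon}(y,z)=\tilde{g}^{ij}(\varepsilon y,\varepsilon z)$ and $\tilde{b}^{i}_{\varepsilon}(y,z)=\varepsilon\tilde{b}^{i}(\varepsilon y,\varepsilon z)$ recalled just before the lemma. Since the Clifford Torus is smooth and compact and $\tau<\sqrt{2}-1$, both $\tilde{g}^{ij}$ and $\tilde{b}^{i}$ are smooth on $\Sigma\times(-\tau,\tau)$, so Taylor's theorem at $z=0$ gives
\begin{eqnarray}\notag
\tilde{g}^{ij}(\varepsilon y,\varepsilon z)=g^{ij}(\varepsilon y)+\varepsilon z\,\partial_{z}\tilde{g}^{ij}(\varepsilon y,0)+\tfrac{1}{2}(\varepsilon z)^{2}\partial_{zz}\tilde{g}^{ij}(\varepsilon y,0)+\overline{a}^{ij}(\varepsilon y,\varepsilon z),
\end{eqnarray}
and an analogous expansion for $\tilde{b}^{i}$, each with a remainder controlled by $c|\varepsilon z|^{3}$ via Taylor's theorem with integral remainder together with the $C^{3}$-boundedness of these coefficients on the compact tubular neighbourhood. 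Plugging into $\Delta_{\Sigma_{\varepsilon,z}}=\tilde{g}^{ij}_{\varepsilon}\partial_{ij}+\tilde{b}^{i}_{\varepsilon}\partial_{i}$ and subtracting $\Delta_{\Sigma_{\varepsilon}}$ produces precisely the form stated in the lemma; the extra $\varepsilon$ multiplying $b_{1}^{i}$, $b_{2}^{i}$ and $\overline{b}^{i}$ is exactly the prefactor coming from $\tilde{b}_{\varepsilon}^{i}=\varepsilon\tilde{b}^{i}(\varepsilon\cdot,\varepsilon\cdot)$.

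It remains to identify the first-order coefficients $a_{1}^{ij}$ and $b_{1}^{i}$ explicitly. From \eqref{param} with $\varepsilon=1$ and the definition \eqref{def_A} of the second fundamental form, one computes
\begin{eqnarray}\notag
\tilde{g}_{ij}(y,z)=\langle\partial_{i}Y+z\partial_{i}\nu,\partial_{j}Y+z\partial_{j}\nu\rangle=g_{ij}(y)-2zA_{ij}(y)+z^{2}g^{kl}A_{ki}(y)A_{lj}(y),
\end{eqnarray}
and inverting this $2\times 2$ matrix in powers of $z$ yields $\tilde{g}^{ij}(y,z)=g^{ij}(y)+2zA^{ij}(y)+O(z^{2})$, so $a_{1}^{ij}=\partial_{z}\tilde{g}^{ij}(y,0)=2A^{ij}$ as claimed. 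For $b_{1}^{i}$ I would differentiate the defining relation \eqref{def_ab} at $z=0$ and combine three ingredients: $(i)$ $\partial_{z}\tilde{g}^{ij}(y,0)=2A^{ij}$ just established; $(ii)$ the identity $\tfrac{1}{2}\partial_{z}\log\det\tilde{g}(y,0)=-H(y)$ already used in the proof of the previous lemma; $(iii)$ the standard formula $\tfrac{1}{2}\partial_{j}\log\det g=\Gamma^{k}_{kj}$. The three resulting terms add up to $2\partial_{j}A^{ij}-g^{ij}H_{j}+2\Gamma^{k}_{kj}A^{ij}$, which is exactly the stated $b_{1}^{i}$.

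The second-order coefficients $a_{2}^{ij}$ and $b_{2}^{i}$ are by definition $\tfrac{1}{2}\partial_{zz}\tilde{g}^{ij}(\varepsilon y,0)$ and $\tfrac{1}{2}\partial_{zz}\tilde{b}^{i}(\varepsilon y,0)$, so no further explicit computation is needed for them; their smoothness and boundedness on the compact Torus suffice for the later applications. I do not expect any serious obstacle in this proof, which is essentially Taylor expansion plus bookkeeping. The only place where care is required is in tracking the $\varepsilon$-factors, since $\partial_{i}$ always acts on the rescaled coordinate $y$ while the geometric coefficients $A^{ij}$, $g^{ij}$, $\Gamma^{k}_{kj}$, $H_{j}$ are evaluated on the unrescaled Torus $\Sigma$ at the point $\varepsilon y$; once this is set straight the remainder bound $|\overline{a}^{ij}|,|\overline{b}^{i}|\leq c\varepsilon^{3}|z|^{3}$ follows immediately from Taylor's integral remainder.
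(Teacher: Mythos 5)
Your proposal is correct and follows essentially the same route as the paper: expand the metric via $\partial_{i}\nu=-A^{k}_{i}\partial_{k}Y$, invert to get $\tilde{g}^{ij}=g^{ij}+2zA^{ij}+O(z^{2})$, and use $\tfrac{1}{2}\partial_{z}\log\det\tilde{g}(y,0)=-H$ together with $\tfrac{1}{2}\partial_{j}\log\det g=\Gamma^{k}_{kj}$ to identify $b_{1}^{i}$, the second-order coefficients being taken by definition. Your explicit Taylor-with-integral-remainder treatment of $\overline{a}^{ij},\overline{b}^{i}$ is in fact a touch more careful than the paper's written proof, which stops at an $O((\varepsilon z)^{2})$ bookkeeping, but the substance is identical.
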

Let $\phi,\psi:\Sigma\to\R$ be $C^{2}$ functions. Let us set $\phi_{i}:=\partial_{i}\phi$. We recall that, by the properties of the covariant derivative,
\begin{eqnarray}\notag
\nabla_{k}A^{ij}=\partial_{k}A^{ij}+\Gamma^{i}_{kl}A^{lj}+\Gamma^{j}_{kl}A^{li},\\\notag
\nabla^{2}_{ij}\phi=\phi_{ij}-\Gamma^{k}_{ij}\phi_{k},
\end{eqnarray}
where everything is evaluated at $\varepsilon y$. Moreover, by Codazzi's equation, $\nabla_{j}A^{ij}=g^{ik}\nabla_{k}A^{j}_{j}$, so in particular, 
\begin{eqnarray}\notag
a_{1}^{ij}\phi_{i}\psi_{j}=2(A\nabla\phi,\nabla\psi)\\
a_{1}^{ij}\psi_{ij}+b_{1}^{i}\psi_{i}=2A^{ij}\psi_{ij}-2\Gamma^{k}_{ji}A^{ij}\psi_{k}+2\nabla_{j}A^{ij}\psi_{i}\label{geom_rel}\\\notag
-(\nabla_{\Sigma}H,\nabla_{\Sigma}\psi)=2<A,\nabla^{2}\psi>+(\nabla_{\Sigma}\psi,\nabla_{\Sigma}H),
\end{eqnarray}
where we have set
\begin{eqnarray}\notag
<A,\nabla^{2}\psi>:=A^{ij}\nabla^{2}_{ij}\psi=A^{ij}\psi_{ij}+\Gamma^{k}_{ij}\psi_{k}.
\end{eqnarray}
\begin{proof}
By (\ref{param}) and (\ref{def_tg}), we can see that
\begin{eqnarray}\notag
\tilde{g}_{\varepsilon,ij}(y,z)=g_{ij}+\varepsilon z(<\partial_{i}Y,\partial_{j}\nu>+
<\partial_{j}Y,\partial_{i}\nu>)+(\varepsilon z)^{2}<\partial_{i}\nu,\partial_{j}\nu>.
\end{eqnarray}
In the proof, it is understood that the geometric quantities of $\Sigma$ are evaluated at $\varepsilon y$. In view of (\ref{def_A}) with $z=0$, we have 
\begin{eqnarray}\notag
\partial_{i}\nu=-A^{k}_{i}\partial_{k}Y
\end{eqnarray}
therefore
\begin{eqnarray}
\tilde{g}_{\varepsilon,ij}(y,z)=g_{ij}-\varepsilon z(g_{ik}A^{k}_{j}+g_{jk}A^{k}_{i})+(\varepsilon z)^{2}<\partial_{i}\nu,\partial_{j}\nu>=\\\notag
g_{ij}-2\varepsilon zA_{ij}+(\varepsilon z)^{2}<\partial_{i}\nu,\partial_{j}\nu>.
\end{eqnarray}
In order to expand the Laplacian, we need the expansion of the inverse of the metric. It is useful to write it as 
\begin{eqnarray}\notag
\tilde{g}_{\varepsilon}=L+M,
\end{eqnarray}
with $L_{ij}=g_{ij}$ and $M=-2\varepsilon zA_{ij}+(\varepsilon z)^{2}<\partial_{i}\nu,\partial_{j}\nu>$. Equivalently, $\tilde{g}_{\varepsilon}=L(I+L^{-1}M)$, hence
\begin{eqnarray}\notag
\tilde{g}_{\varepsilon}^{-1}=(I+L^{-1}M)^{-1}L^{-1}=(I-L^{-1}M+O((\varepsilon z)^{2}))L^{-1}=L^{-1}-L^{-1}ML^{-1}+O((\varepsilon z)^{2}),
\end{eqnarray}
thus 
\begin{eqnarray}\notag
\tilde{g}_{\varepsilon}^{ij}(y,z)=g^{ij}+2\varepsilon zA^{ij}+O((\varepsilon z)^{2}).
\end{eqnarray}
where $A^{ij}=g^{ik}g^{jl}A_{kl}$. Moreover
\begin{eqnarray}\notag
\log\det\tilde{g}_{\varepsilon}(y,z)=\log\det g_{\varepsilon}(y)+\tr(L^{-1}M)+O((\varepsilon z)^{2})=\log\det g_{\varepsilon}-2\varepsilon zH+O((\varepsilon z)^{2}),
\end{eqnarray}
so, since $\frac{1}{2}\partial_{j}(\log\det g)A^{ij}=\Gamma^{k}_{kj}A^{ij}$,
\begin{eqnarray}\notag
\Delta_{\Sigma_{\varepsilon,z}}=(g^{ij}+2\varepsilon zA^{ij})\partial_{ij}+
\varepsilon(\partial_{j}g^{ij}+2\varepsilon z\partial_{j}A^{ij})\partial_{i}\\\notag
+\varepsilon\big(\frac{1}{2}\partial_{j}(\log\det g)-\varepsilon zH_{j}\big)(g^{ij}+2\varepsilon zA^{ij})\partial_{i}+O((\varepsilon z)^{2})=\\\notag
\Delta_{\Sigma_{\varepsilon}}+\varepsilon z\bigg\{2A^{ij}\partial_{ij}+\varepsilon(2\partial_{j}A^{ij}+2\Gamma^{k}_{kj}A^{ij}-g^{ij}H_{j})\partial_{i}\bigg\}+O((\varepsilon z)^{2}).
\end{eqnarray}
\end{proof}
As a consequence, we have the following expansion of the Laplacian 
\begin{eqnarray}
\Delta=\partial_{zz}-\varepsilon\tilde{H}(\varepsilon y,\varepsilon z)\partial_{z}+\Delta_{\Sigma_{\varepsilon}}
+\varepsilon z(a_{1}^{ij}(\varepsilon y)\partial_{ij}+\varepsilon b_{1}^{i}(\varepsilon y)\partial_{i})\label{exp_lapl}\\\notag
+(\varepsilon z)^{2}(a_{2}^{ij}(\varepsilon y)\partial_{ij}+\varepsilon b_{2}^{i}(\varepsilon y)\partial_{i})+\overline{a}^{ij}(\varepsilon y,\varepsilon z)\partial_{ij}+\varepsilon\overline{b}^{i}(y,z)\partial_{i}.
\end{eqnarray}
Although (\ref{exp_lapl}) looks nice, we prefer to look for the expression of the Laplacian in a slightly different system of coordinates. We fix a $C^{2}$ function $\phi:\Sigma\to\R$ whose $L^{\infty}(\Sigma)$ is less than $1/4$ and we introduce a new change of variables, that is we put 
\begin{eqnarray}
t=z-\phi(\varepsilon y).
\label{deft}
\end{eqnarray}
The expression of the Laplacian will be more complicated than (\ref{exp_lapl}), but more appropriate for our purposes. The reason is that we know the kernel of the operator $-(\Delta_{\Sigma_{\varepsilon}}+\partial_{tt})+W^{''}(v_{\star}(t))$, that is the one dimensional space generated by $v_{\star}^{'}(t)$, while we do not know exactly the kernel (if any) of $-(\Delta_{\Sigma_{\varepsilon}}+\partial_{zz})+W^{''}(v_{\star}(z-\phi(\varepsilon y)))$. 

Given a function
\begin{eqnarray}\notag
f:\Sigma_{\varepsilon}\times\R\to\R
\end{eqnarray}
of class $C^{2}$, it is possible to define
\begin{eqnarray}\notag
\text{f}:\Sigma_{\varepsilon}\times\R\to\R
\end{eqnarray}
by setting $\text{f}(y,t):=f(y,z-\phi(\varepsilon y))$. A computation shows that
\begin{eqnarray}\notag
\text{f}_{t}(y,t)=f_{z}(y,z-\phi)\\\notag
\text{f}_{i}(y,t)=f_{i}(y,z-\phi)-\varepsilon\phi_{i}f_{z}(y,z-\phi)\\\notag
\text{f}_{ij}(y,t)=f_{ij}(y,z-\phi)-\varepsilon\phi_{i}f_{zj}(y,z-\phi)-\varepsilon\phi_{i}f_{zj}(y,z-\phi)\\\notag
+\varepsilon^{2}\phi_{ij}f_{z}(y,z-\phi)+\varepsilon\phi_{i}\phi_{j}f_{zz}(y,z-\phi),
\end{eqnarray}
where $\phi$ and its derivatives are evaluated at $\varepsilon y$, thus, in these coordinates, the expression of the Laplacian of a function $u$ defined in $V_{\tau/\varepsilon}$ of class $C^{2}$ is given by
\begin{eqnarray}
\Delta=\partial_{tt}+g^{ij}\partial_{ij}+\varepsilon b^{i}\partial_{i}+\text{D}=\partial_{tt}+\Delta_{\Sigma_{\varepsilon}}+\text{D},
\label{lapl_yt}
\end{eqnarray}
where the operator D is given by
\begin{eqnarray}
\text{D}:=-\varepsilon\hat{H}(\varepsilon y,\varepsilon(t+\phi))\partial_{t}-\varepsilon^{2}\Delta_{\Sigma}\phi\partial_{t}-2\varepsilon g^{ij}\phi_{i}\partial_{tj}+\varepsilon^{2}|\nabla_{\Sigma}\phi|^{2}\partial_{tt}\label{exp_laplt}\\\notag
+\varepsilon(t+\phi)\big\{a_{1}^{ij}\partial_{ij}+\varepsilon b_{1}^{i}\partial_{i}-\varepsilon^{2}(a_{1}^{ij}\phi_{ij}+ b_{1}^{i}\phi_{i})\partial_{t}-2\varepsilon a_{1}^{ij}\phi_{i}\partial_{tj}+\varepsilon^{2}a_{1}^{ij}\phi_{i}\phi_{j}\partial_{tt}\big\}\\\notag
+\varepsilon^{2}(t+\phi)^{2}\big\{a_{2}^{ij}\partial_{ij}+\varepsilon b_{2}^{i}\partial_{i}-\varepsilon^{2}(a_{2}^{ij}\phi_{ij}+ b_{2}^{i}\phi_{i})\partial_{t}-2\varepsilon a_{2}^{ij}\phi_{i}\partial_{tj}+\varepsilon^{2}a_{2}^{ij}\phi_{i}\phi_{j}\partial_{tt}\big\}\\\notag
+\hat{a}^{ij}\partial_{ij}+\varepsilon\hat{b}^{i}\partial_{i}-\varepsilon^{2}(\hat{a}^{ij}\phi_{ij}+ \hat{b}^{i}\phi_{i})\partial_{t}-2\varepsilon\hat{a}^{ij}\phi_{i}\partial_{tj}+\varepsilon^{2}\hat{a}^{ij}\phi_{i}\phi_{j}\partial_{tt}.
\end{eqnarray}
Here we have set $\hat{H}(\varepsilon y,\varepsilon(t+\phi)):=\tilde{H}(\varepsilon y,\varepsilon z)$, $\hat{a}^{ij}(\varepsilon y,\varepsilon(t+\phi))=\overline{a}^{ij}(\varepsilon y,\varepsilon z)$, $\hat{b}^{i}(\varepsilon y,\varepsilon(t+\phi))=\overline{b}^{i}(\varepsilon y,\varepsilon z)$ and all the geometric quantities of $\Sigma$ are evaluated at $\varepsilon y$.

\section{Functional setting}
\subsection{Functions on $\Sigma_{\varepsilon}$}
As first we define, for $0<\alpha<1$, the space $C^{k,\alpha}(\Sigma)$ as the set of functions $\phi:\Sigma\to\R$ that are $k$ times differentiable and whose $k-$th partial derivatives are H\"{o}lder continuous with exponent $\alpha$. We endow these spaces with the norms
\begin{eqnarray}\notag
|\phi|_{C^{k,\alpha}(\Sigma)}:=\sum_{j=0}^{k}||\nabla^{j}\phi||_{\infty}+\varepsilon^{\alpha}\sup_{p\neq q}\frac{|\nabla^{k}\phi(p)-\nabla^{k}\phi(q)|}{d(p,q)^{\alpha}}.
\end{eqnarray}
We note that these norms depend on $\varepsilon$, since this is the right scaling in order to obtain our estimates. 
Moreover, in order to treat $\tilde{L}_{0}$, we define the spaces of functions that respect the symmetries of the Torus, that is the symmetry with respect to the $x_{1}x_{2}$-plane and with respect to any rotation that keeps the $x_{3}$-axis fixed. To be precise, we set $T(x_{1},x_{2},x_{3}):=(x_{1},x_{2},-x_{3})$ and 
\begin{eqnarray}\notag
SO_{x_{3}}(3):=\{R\in SO(3):Re_{3}=e_{3}\},
\end{eqnarray}
where $e_{3}=(0,0,1)$, and we define
\begin{eqnarray}\notag
C^{k,\alpha}(\Sigma)_{s}:=\{\phi\in C^{k,\alpha}(\Sigma):\phi(\zeta)=\phi(T\zeta)\text{ for any $\zeta\in\Sigma$, }\\\notag
\phi(\zeta)=\phi(R\zeta)\text{ for any $R\in SO_{x_{3}}(3)$}\}.
\end{eqnarray}
By the symmetries of the Laplacian, the gradient and the geometric quantities of $\Sigma$, one can show that $\tilde{L}_{0}$ preserves the symmetries of functions $\phi\in C^{4,\alpha}(\Sigma)_{s}$, that is it maps $C^{4,\alpha}(\Sigma)_{s}$ into $C^{0,\alpha}(\Sigma)_{s}$. 

We note that $SO_{x_{3}}(2)\simeq SO(2)$, in the sense that any matrix $R\in SO_{x_{3}}(3)$ has the form
\begin{eqnarray}\notag
R=
\begin{bmatrix}
\tilde{R} & 0\\
0 & 1
\end{bmatrix},
\end{eqnarray} 
for some rotation of the $x_{1}x_{2}$-plane $\tilde{R}\in SO(2)$.

Moreover, Remark \ref{remtilde_L_0} can be rephrased by saying that the operator
\begin{eqnarray}\notag
\mathcal{L}:C^{4,\alpha}(\Sigma)_{s}\times\R\to C^{0,\alpha}(\Sigma)_{s}\times\R
\end{eqnarray}
defined by 
\begin{eqnarray}\notag
\mathcal{L}(\phi,\lambda):=\bigg(\tilde{L}_{0}\phi+\lambda,\int_{\Sigma}\phi(\zeta)d\sigma(\zeta)\bigg)
\end{eqnarray}
is injective. In fact, if $\mathcal{L}(\phi,\lambda)=0$, multiplying by $\phi$ and integrating over $\Sigma$ we get
\begin{eqnarray}\notag
\int_{\Sigma}\tilde{L}_{0}\phi(\zeta)\phi(\zeta)d\sigma(\zeta)=-\lambda\int_{\Sigma}\phi(\zeta)d\sigma(\zeta)=0,
\end{eqnarray}
and hence, since $\tilde{L}_{0}$ is positive definite on 
\begin{eqnarray}
X:=\bigg\{\phi\in C^{4,\alpha}(\Sigma)_{s}:\int_{\Sigma}\phi(\zeta)d\sigma(\zeta)=0\bigg\},
\end{eqnarray}
we conclude that $\phi=0$, so $\lambda=0$.

Being $\mathcal{L}$ also elliptic and self-adjoint with respect to the scalar product
\begin{eqnarray}\notag
<(\phi,\lambda),(\psi,\mu)>:=\int_{\Sigma}\phi(\zeta)\psi(\zeta)d\sigma(\zeta)+\lambda\mu,
\end{eqnarray}
it is actually an isomorphism with bounded inverse.

In the sequel ,we will often use the notation
\begin{eqnarray}\notag
B_{k}(1/4):=\{\phi\in C^{k,\alpha}(\Sigma)_{s}:|\phi|_{C^{k,\alpha}(\Sigma)}\leq 1/4\}.
\end{eqnarray}

\subsection{Exponentially decaying functions on $\R^{3}$}

For any $\delta>0$ and for any $x\in\R^{N}$, we define
\begin{eqnarray}\notag
\varphi_{\delta}(x)=:\zeta(|x|)+(1-\zeta(|x|))e^{\delta|x|},
\end{eqnarray}
where $\chi:\R\to\R$ is a $C^{\infty}$ cutoff function such that
\begin{eqnarray}\notag
\zeta(t)=
\begin{cases}
1 &\text{for $t<1$}\\\notag
0 &\text{for $t>2$.}
\end{cases}
\end{eqnarray}
Moreover, we introduce the weighted spaces
\begin{eqnarray}\notag
C^{k,\alpha}_{\delta}(\R^{3}):=\{u\in C^{k,\alpha}(\R^{3}):||\tilde{u}_{\delta}||_{C^{k,\alpha}(\R^{3})}<\infty\},
\end{eqnarray} 
where $\tilde{u}_{\delta}:=u\varphi_{\delta}$ and $C^{k,\alpha}(\R^{3})$ is the space of $C^{k}(\R^{3})$ functions whose forth derivatives are H\"{o}lder continuous with exponent $\alpha$. We point out that functions belonging $C^{k,\alpha}_{\delta}(\R^{3})$ decay exponentially with rate $\delta$, and the same is true for their derivatives.

This spaces are endowed with the norm $||u||_{C^{k,\alpha}_{\delta}(\R^{3})}=||\tilde{u}_{\delta}||_{C^{k,\alpha}(\R^{3})}$, where
\begin{eqnarray}\notag
||u||_{C^{k,\alpha}(\R^{3})}:=\sum_{j=0}^{k}||\nabla^{j}u||_{\infty}+
[\nabla^{k}u]_{\alpha}.
\end{eqnarray}
In order to construct solutions to (\ref{Cahn-Hilliard}) that respect the symmetry of the Torus, we need to introduce the spaces of functions 
fulfilling these symmetries, that is
\begin{eqnarray}\notag
C^{k,\alpha}_{\delta,s}(\R^{3}):=\{u\in C^{k,\alpha}_{\delta}(\R^{3}):u(Tx)=u(x)&\text{, }u(Rx)=u(x)&\text{ for any }R\in SO_{x_{3}}(3)\}.
\end{eqnarray}
\begin{remark}
We note that, for instance, if $u\in C^{2,\alpha}_{\delta,s}(\R^{3})$, then $\Delta u\in C^{0,\alpha}_{\delta,s}(\R^{3})$. In fact, by definition, any $u\in C^{2,\alpha}_{\delta,s}(\R^{3})$ satisfies $u(x)=u_{T}(x)$, where $u_{T}(x):=u(Tx)$. Taking the Laplacian, we can see that $\Delta u(x)=\Delta u_{T}(x)=\Delta u(Tx)$, and similarly, if $R\in SO_{x_{3}}(3)$ and we set $u_{R}(x)=u(Rx)$, then $\Delta u(x)=\Delta u_{R}(x)=\Delta u(Rx)$.
\label{rem_symm}
\end{remark}

\subsection{Functions on $\Sigma_{\varepsilon}\times\R$}
First we will show existence and uniqueness of the heteroclinic solution to the ODE $-v_{\star}^{''}+W^{'}(v_{\star})=0$. The result is known, but since the proof is quite short, we report it for completeness.
\begin{lemma}
Let $W$ be an even double well potential satisfying (\ref{double-well}). Then there exists a unique solution $v_{\star}$ to the problem
\begin{eqnarray}
\begin{cases}
-v_{\star}^{''}+W^{'}(v_{\star})=0\\
v_{\star}(0)=0\\
v_{\star}\to\pm 1 &\text{as $t\to\pm\infty$.}
\end{cases}
\label{ODE}
\end{eqnarray}
and this solution is odd.
\end{lemma}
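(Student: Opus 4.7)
The plan is to use the standard energy-integral method for autonomous second-order ODEs, then derive oddness from uniqueness together with the evenness of $W$.

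First I would multiply the equation $-v_\star'' + W'(v_\star) = 0$ by $v_\star'$ and integrate to obtain the conserved quantity $\tfrac{1}{2}(v_\star')^2 - W(v_\star) = C$. Imposing the asymptotic conditions $v_\star(t) \to \pm 1$ and (necessarily) $v_\star'(t) \to 0$ as $t \to \pm\infty$, together with $W(\pm 1) = 0$, forces $C = 0$, so any admissible solution must satisfy the first-order relation $(v_\star')^2 = 2W(v_\star)$. Since the solution must interpolate between $-1$ and $+1$, I would fix the convention that $v_\star$ is increasing, giving $v_\star' = \sqrt{2W(v_\star)}$; in particular, $v_\star'(0) = \sqrt{2W(0)} > 0$.

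Next I would construct $v_\star$ by separation of variables: the implicit formula
\begin{eqnarray}\notag
\int_{0}^{v_\star(t)} \frac{ds}{\sqrt{2W(s)}} = t
\end{eqnarray}
defines $v_\star$ on all of $\R$. The key point is that because $W''(\pm 1) > 0$, one has $W(s) \sim \tfrac{1}{2}W''(\pm 1)(s \mp 1)^2$ near $\pm 1$, so $1/\sqrt{2W(s)}$ has a non-integrable singularity there; hence the integral on the left diverges as $v_\star(t) \to \pm 1$, which guarantees that these limits are attained only as $t \to \pm\infty$. Smoothness of $v_\star$ follows from that of $W$ and the inverse function theorem applied to the above integral. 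This produces existence.

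For uniqueness, any solution of the boundary value problem must satisfy $(v_\star')^2 = 2W(v_\star)$ by the argument above, and the initial conditions $v_\star(0) = 0$, $v_\star'(0) = \sqrt{2W(0)}$ (with positive sign forced by the choice of limits) determine it uniquely via the Cauchy-Lipschitz theorem applied to the original second-order ODE. Finally, for oddness, I would set $w(t) := -v_\star(-t)$ and check directly that $w$ solves the same ODE: using that $W$ even implies $W'$ odd, $-w''(t) + W'(w(t)) = v_\star''(-t) - W'(v_\star(-t)) = 0$. Since $w(0) = 0$, $w'(0) = v_\star'(0)$, and $w(t) \to \pm 1$ as $t \to \pm\infty$, uniqueness yields $w \equiv v_\star$, i.e.\ $v_\star(-t) = -v_\star(t)$. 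The only subtlety is tracking the sign choice of $v_\star'(0)$ consistently, but this is fixed once and for all by the asymptotic conditions.
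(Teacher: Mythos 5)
Your proposal is correct, but it reaches existence by a different route than the paper. Both arguments hinge on the same first integral $\tfrac{1}{2}(v_{\star}^{'})^{2}=W(v_{\star})+c$ with $c=0$ and both obtain oddness from uniqueness applied to $w(t)=-v_{\star}(-t)$, using that $W$ even makes $W^{'}$ odd. The difference is in how the heteroclinic is produced: the paper shoots, i.e. solves the Cauchy problem with $v_{\star}(0)=0$, $v_{\star}^{'}(0)=\sqrt{2W(0)}$, and then runs a qualitative analysis on the maximal solution (monotonicity via the impossibility of an interior zero of $v_{\star}^{'}$, exclusion of blow-up through a sign argument on $v_{\star}^{''}$, and identification of the limit as a zero of $W$), whereas you construct the solution explicitly by quadrature, $\int_{0}^{v_{\star}(t)}ds/\sqrt{2W(s)}=t$, with the nondegeneracy $W^{''}(\pm 1)>0$ making the integral diverge at $\pm 1$ and hence forcing the limits to be attained only at $t=\pm\infty$. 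Your construction buys a more transparent global definition and makes the exponential approach to $\pm 1$ essentially visible; the paper's shooting argument avoids discussing invertibility of the quadrature map but needs the ruling-out-of-blow-up step. Two points you gloss over deserve a line each to be complete: (i) that $v_{\star}^{'}\to 0$ at $\pm\infty$ (needed to get $C=0$) follows because otherwise $(v_{\star}^{'})^{2}\to 2C>0$ would contradict boundedness of $v_{\star}$; and (ii) the claim that the sign $v_{\star}^{'}(0)=+\sqrt{2W(0)}$ is ``forced by the limits'' requires exactly the paper's observation that $v_{\star}^{'}$ cannot vanish anywhere: a zero of $v_{\star}^{'}$ would give $W(v_{\star})=0$, hence $v_{\star}=\pm 1$ there, and Cauchy uniqueness would force $v_{\star}\equiv\pm 1$; without this, a solution starting with negative slope could in principle turn around and still satisfy the boundary conditions. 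With these two remarks added, your argument is complete.
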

It is known that, if $W(t)=\frac{1}{4}(1-t^{2})^{2}$ is the classical double-well potential, then $v_{\star}(t)=\tanh(t/\sqrt{2})$.
\begin{proof}
Let $v_{\star}$ be the unique solution to the Cauchy Problem
\begin{eqnarray}\notag
\begin{cases}
-v_{\star}^{''}+W^{'}(v_{\star})=0\\\notag
v_{\star}(0)=0\\\notag
v_{\star}^{'}(0)=\sqrt{2W(0)}.
\end{cases}
\end{eqnarray}
Let $(a,b)$ be its maximal interval of definition, with $a<0<b$. Since the function $w(t)=-v_{\star}(-t)$ is still a solution to the same Cauchy Problem, $v_{\star}$ is an odd function, so it is enough to study $v_{\star}$ in the positive half line and $a=-b$. Multiplying the ODE by $v_{\star}^{'}$ and integrating we have
\begin{eqnarray}
\frac{1}{2}(v_{\star}^{'})^{2}=W(v_{\star})+c.
\label{relint}
\end{eqnarray}
Evaluating at $t=0$, it is possible to see that $c=0$. As a consequence, $v_{\star}^{'}>0$ in $(0,b)$. In fact, if we assume by contradiction that there exists a first $t_{0}$ such that $v_{\star}^{'}(t_{0})=0$, then $W(v_{\star}(t_{0}))=0$, so in particular $v_{\star}(t_{0})=1$, but, by the uniqueness Cauchy Theorem, this implies that $v_{\star}\equiv 1$ in a neighbourhood of $t_{0}$, a contradiction. As a consequence, it is possible to define
\begin{eqnarray}\notag
l:=\lim_{t\to b}v_{\star}(t).
\end{eqnarray}
By monotonicity, we know that $l>0$. Now we want to rule out the case $l=\infty$. In fact, it this were true, we would have $v_{\star}^{''}<0$ near $0$ and $v_{\star}^{''}>0$ near $b$, so there should exist $t_{1}>0$ such that $v_{\star}^{''}(t_{1})=0$. Therefore, using the equation and (\ref{relint}), we can see that $v_{\star}(t_{1})=1$ and $v_{\star}^{'}(t_{1})=0$, which is not possible.

Since $l<\infty$, we have $b=\infty$. Now, always by (\ref{relint}), we get that $v_{\star}^{'}\to\sqrt{2W(l)}$ as $t\to\infty$. Since $u$ is bounded, $W(l)=0$, hence $l=1$.

Uniqueness follows from the Cauchy Theorem.
\end{proof}
It is known that $v_{\star}$ converges exponentially to $\pm 1$ as $t\to\pm\infty$ at a rate which is given by $\sqrt{W^{''}(1)}=\sqrt{W^{''}(-1)}$, since $W$ is even. More precisely, for any $k\in\N$, there exists a constant $c_{k}$ such that
\begin{eqnarray}
|\partial^{k}_{t}(v_{\star}-1)|\leq c_{k}e^{-t\sqrt{W^{''}(1)}} &\text{for any $t\geq 0$}
\label{expdecdx}
\end{eqnarray} 
and
\begin{eqnarray}
|\partial^{k}_{t}(v_{\star}+1)|\leq c_{k}e^{t\sqrt{W^{''}(1)}} &\text{for any $t\leq 0$.}
\label{expdecsx}
\end{eqnarray}
For instance, in the classical case $W(t)=\frac{1}{4}(1-t^{2})^{2}$, we have $\sqrt{W^{''}(\pm 1)}=\sqrt{2}$.\\

For $0<\delta<\sqrt{W^{''}(1)}$, we define the function
\begin{eqnarray}\notag
\psi_{\delta}(t)=(1+e^{t})^{\delta}(1+e^{-t})^{\delta}.
\end{eqnarray}
For $0<\varepsilon\leq 1$ and $0<\alpha<1$, we define the space $C^{k,\alpha}_{\delta}(\Sigma_{\varepsilon}\times\R)$ as the set of functions $U:\Sigma_{\varepsilon}\times\R\to\R$ that are $k$ times differentiable and whose $k-$th partial derivatives are H\"{o}lder continuous with exponent $\alpha$. This space is endowed with the norm
\begin{eqnarray}\notag
||U||_{C^{k,\alpha}_{\delta}(\Sigma_{\varepsilon}\times\R)}=||U\psi_{\delta}||_{C^{k,\alpha}(\Sigma_{\varepsilon}\times\R)},
\end{eqnarray}
where
\begin{eqnarray}\notag
||U||_{C^{k,\alpha}(\Sigma_{\varepsilon}\times\R)}=\sum_{j=0}^{k}||\nabla^{j} U||_{L^{\infty}(\Sigma_{\varepsilon}\times\R)}+\sup_{x\neq y}\frac{|\nabla^{k}u(x)-\nabla^{k}u(y)|}{|x-y|^{\alpha}}.
\end{eqnarray} 
Given the heteroclinic solution $v_{\star}$, we can define the spaces
\begin{eqnarray}\notag
\mathcal{E}^{k,\alpha}_{\delta}(\Sigma_{\varepsilon}\times\R):=\bigg\{U\in C^{k,\alpha}_{\delta}(\Sigma_{\varepsilon}\times\R):\int_{-\infty}^{\infty}U(y,t)v_{\star}^{'}(t)dt=0 \text{ for any }y\in\Sigma_{\varepsilon}\bigg\}
\end{eqnarray}
of functions that orthogonal, for any $y\in\Sigma_{\varepsilon}$, to $v_{\star}^{'}$.

Moreover, as above, we will be interested in the spaces of functions that respect the symmetries of the Torus, thus we define
\begin{eqnarray}\notag
C^{k,\alpha}_{\delta,s}(\Sigma_{\varepsilon}\times\R):=\{U\in C^{k,\alpha}_{\delta}(\Sigma_{\varepsilon}\times\R):U_{T}=U,&\text{ }U_{R}=U&\text{for any }R\in SO_{x_{3}}(3)\},
\end{eqnarray}
where we have set $U_{T}(y,z):=U(Ty,z)$ and $U_{R}(y,z):=U(Ry,z)$. Furthermore, we set $\mathcal{E}^{k,\alpha}_{\delta,s}(\Sigma_{\varepsilon}\times\R):=\mathcal{E}^{k,\alpha}_{\delta}(\Sigma_{\varepsilon}\times\R)\cap
C^{k,\alpha}_{\delta,s}(\Sigma_{\varepsilon}\times\R)$. These spaces consist of functions that are both symmetric and orthogonal to $v_{\star}^{'}$.

In the sequel, we will often mention the operator
\begin{eqnarray}\notag
\mathcal{L}_{\varepsilon}U:=-(\Delta_{\Sigma_{\varepsilon}}+\partial_{tt})U(y,t)+W^{''}(v_{\star}(t))U(y,t),
\end{eqnarray}
defined for any $U\in\ C^{4,\alpha}_{\delta}(\Sigma_{\varepsilon}\times\R)$.

\section{Idea of the proof: Lyapunov-Schmidt reduction}
By a rescaling argument, it is enough to construct solutions to 
\begin{eqnarray}\notag
-\Delta(-\Delta u+W{'}(u))+W^{''}(u)(-\Delta u+W{'}(u))=0,
\end{eqnarray}
whose nodal set is close to $\Sigma_{\varepsilon}$, since we can obtain the required solutions to (\ref{Cahn-Hilliard}) by setting $\tilde{u}(x):=u(x/\varepsilon)$. Thus we set
\begin{eqnarray}
F(u)=-\Delta(-\Delta u+W^{'}(u))+W^{''}(u)
(-\Delta u+W^{'}(u)).
\label{defF}
\end{eqnarray}
A computation shows that
\begin{eqnarray}
F^{'}(u)v=-\Delta(-\Delta v+W^{''}(u)v)+W^{''}(u)(-\Delta v+W^{''}(u)v)\\\notag
+W^{'''}(u)(-\Delta u+W^{'}(u))v
\label{defF'}
\end{eqnarray}
and
\begin{eqnarray}
F^{''}(u)[v,w]=-\Delta(W^{'''}(u)vw)+(W^{'''}(u)W^{''}(u)+W^{(4)}(u)(-\Delta u+W^{'}(u)))vw+\\\notag
W^{'''}(u)[w(-\Delta v+W^{''}(u)v)+v(-\Delta w+W^{''}(u)w)].
\label{defF''}
\end{eqnarray}
In order to produce the required solutions we fix $\varepsilon>0$ small and a small function $\phi\in C^{4,\alpha}(\Sigma)_{s}$, in the sense that $|\phi|_{C^{4,\alpha}(\Sigma)}<1/4$, and we define the approximate solution $v_{\varepsilon,\phi}$ in such a way that its nodal is exactly
\begin{eqnarray}\notag
\Sigma_{\varepsilon,\phi}:=\{y+\phi(\varepsilon y)\nu(\varepsilon y):y\in\Sigma_{\varepsilon}\},
\end{eqnarray}
and $v_{\varepsilon,\phi}\equiv\pm 1$ outside a sufficiently small tubular neighbourhood of $\Sigma_{\varepsilon,\phi}$, that is a neighbourhood of width $\tau/2\varepsilon+6$. More precisely, we set
\begin{eqnarray}\notag
\mathbb{H}(x):=
\begin{cases}
1 &\text{ if $f_{\varepsilon}(x)>0$}\\\notag
0 &\text{ if $f_{\varepsilon}(x)=0$}\\\notag
-1 &\text{ if $f_{\varepsilon}(x)<0$}
\end{cases}
\end{eqnarray}
and, for any $\varepsilon>0$ and for any integer $m>0$,
\begin{eqnarray}\notag
\chi_{m}(x):=
\begin{cases}
\zeta(|t|-\frac{\tau}{2\varepsilon}-m)&\text{ if }x=Z_{\varepsilon}(y,t+\phi(\varepsilon y))\in V_{\tau/\varepsilon},\\\notag
0&\text{ if }x\in\R^{3}\backslash V_{\tau/\varepsilon},
\end{cases}
\end{eqnarray}
and we look for an approximate solution of the form
\begin{eqnarray}
v_{\varepsilon,\phi}(x)=\chi_{5}(x)\tilde{v}_{\varepsilon,\phi}(y,t)+(1-\chi_{5}(x))\mathbb{H}(x),
\label{defv}
\end{eqnarray}
where $t$ is defined in (\ref{deft}), and $v_{\varepsilon,\phi}$ is understood to coincide with $\mathbb{H}$ outside the support of $\chi$. Moreover $v_{\varepsilon,\phi}$ will vanish exactly on $\Sigma_{\varepsilon,\phi}$ and it will respect the symmetries of the Torus. We stress that these cutoff functions actually depend on $\phi$, but we prefer not put the subscript $\phi$ to simplify the notation. 
However, we will see that the error $F(v_{\varepsilon,\phi})$ is small, but not zero, therefore we have to add a correction $w=w_{\varepsilon,\phi}$ depending on $\varepsilon$ and $\phi$ in order to obtain a real solution, that is $F(v_{\varepsilon,\phi}+w)=0$. Rephrasing our problem in this way, the unknowns are $\phi$ and $w$, for any $\varepsilon>0$ small but fixed. Expanding $F$ in Taylor series, our equation becomes
\begin{eqnarray}
F(v_{\varepsilon,\phi})+F^{'}(v_{\varepsilon,\phi})w+Q_{\varepsilon,\phi}(w)=0,
\label{taylor}
\end{eqnarray}
where
\begin{eqnarray}
Q_{\varepsilon,\phi}(w)=\int_{0}^{1}dt\int_{0}^{t}F^{''}(v_{\varepsilon,\phi}+sw)[w,w]ds,
\label{quadratic_w}
\end{eqnarray}
However, we are not able to solve it directly, because of the lack of coercivity of $F^{'}(v_{\varepsilon,\phi})$.

\subsection{The auxiliary equation: a gluing procedure}

We look for a solution of the form
\begin{eqnarray}
w(x)=\chi_{2}(x)U(y,t)+V(x), 
\end{eqnarray}
where $V$ is defined in the whole $\R^{3}$, $U$ is defined in the entire $\Sigma_{\varepsilon}\times\R$. 
Since we want our solutions $u_{\varepsilon}$ to respect the symmetries of the Torus, we look for solutions $U$ and $V$ such that
\begin{eqnarray}\notag
U(y,t)=U(Ty,t), &\text{ }U(y,t)=U(Ry,t), &\text{ for any $R\in SO_{x_{3}}(3)$ and $(y,t)\in\Sigma_{\varepsilon}\times\R$}\\\notag
V(x)=V(Tx), &\text{ }V(x)=V(Rx), &\text{ for any $R\in SO_{x_{3}}(3)$ and $x\in\R^{3}$.}
\end{eqnarray}
Now we observe that the potential
\begin{eqnarray}
\Gamma_{\varepsilon,\phi}(x):=(1-\chi_{1}(x))W^{''}(v_{\varepsilon,\phi})+\chi_{1}(x)W^{''}(1)
\end{eqnarray}
is positive and bounded away from $0$ in the whole $\R^{3}$, that is, for any $0<\gamma<\sqrt{W^{''}(1)}$, $0<\gamma^{2}<\Gamma_{\varepsilon,\phi}(x)<W^{''}(1)+\tau_{0}$ provided $\varepsilon$ is small enough, the estimate is uniform in $\phi$. Moreover, using that $\chi_{2}\chi_{1}=\chi_{1}$, we compute 
\begin{eqnarray}
0=\chi_{2}\bigg\{F(\tilde{v}_{\varepsilon,\phi})+F^{'}(\tilde{v}_{\varepsilon,\phi})+\chi_{1}Q_{\varepsilon,\phi}(U+V)
+\chi_{1}\text{M}_{\varepsilon,\phi}(V)\bigg\}\\\notag+(-\Delta+\Gamma_{\varepsilon,\phi})^{2}V
+(1-\chi_{2})F(\tilde{v}_{\varepsilon,\phi})+(1-\chi_{1})Q_{\varepsilon,\phi}(\chi_{2}U+V)+\text{N}_{\varepsilon,\phi}(U)+\text{P}_{\varepsilon,\phi}(V),
\end{eqnarray}
where
\begin{eqnarray}
\text{M}_{\varepsilon,\phi}(V):=(W^{''}(\tilde{v}_{\varepsilon,\phi})-W^{''}(1))(-\Delta V+\Gamma_{\varepsilon,\phi} V)\label{defM}\\\notag
+(-\Delta+W^{''}(\tilde{v}_{\varepsilon,\phi}))\big[(W^{''}(\tilde{v}_{\varepsilon,\phi})-W^{''}(1))V\big]\\
\text{N}_{\varepsilon,\phi}(U):=-2<\nabla\chi_{2},\nabla(-\Delta U+W^{''}(\tilde{v}_{\varepsilon,\phi})U)>-\Delta\chi_{2}(-\Delta U+W^{''}(\tilde{v}_{\varepsilon,\phi})U)\label{defN}\\\notag
+(-\Delta+W^{''}(\tilde{v}_{\varepsilon,\phi}))(-2<\nabla\chi_{2},\nabla U>-\Delta\chi_{2}U)\\
\text{P}_{\varepsilon,\phi}(V):=-2<\nabla\chi_{1},\nabla((W^{''}(\tilde{v}_{\varepsilon,\phi})-W^{''}(1))V)>
-\Delta\chi_{1}(W^{''}(\tilde{v}_{\varepsilon,\phi})-W^{''}(1))V\label{defP}\\\notag
+W^{'''}(v_{\varepsilon,\phi})(-\Delta v_{\varepsilon,\phi}+W^{'}(v_{\varepsilon,\phi}))V.
\end{eqnarray}
Hence we have reduced our problem to finding a solution $(V,U)$ to the system
\begin{eqnarray}
(-\Delta+\Gamma_{\varepsilon,\phi})^{2}V+(1-\chi_{2})F(\tilde{v}_{\varepsilon,\phi})\label{eq_aux1}\\\notag
+(1-\chi_{1})Q_{\varepsilon,\phi}(\chi_{2}U+V)+\text{N}_{\varepsilon,\phi}(U)+\text{P}_{\varepsilon,\phi}(V)=0 &\text{ in $\R^{3}$}\\
F(\tilde{v}_{\varepsilon,\phi})+F^{'}(\tilde{v}_{\varepsilon,\phi})+\chi_{1}Q_{\varepsilon,\phi}(U+V)
+\chi_{1}\text{M}_{\varepsilon,\phi}(V)=0 &\text{for $|t|\leq\tau/2\varepsilon+3$}\label{eq_aux2}.
\end{eqnarray}
The system of equations (\ref{eq_aux1}) and (\ref{eq_aux2}) is known as \textit{auxiliary equation}. First we solve equation (\ref{eq_aux1}) for any fixed $U$, thanks to coercivity, due to the fact that $\Gamma_{\varepsilon,\phi}$ is bounded away from $0$ uniformly in $\varepsilon$ and $\phi$. We will see that our solution also depends on the data $U$ and $\varepsilon$ in a Lipschitz way.
\begin{proposition}
For any $\varepsilon>0$ small enough, for any $U\in C^{4,\alpha}_{\delta,s}(\Sigma_{\varepsilon}\times\R)$ satisfying $||U||_{C^{4,\alpha}_{\delta}(\Sigma_{\varepsilon}\times\R)}\leq 1$ and for any $\phi\in B_{4}(1/4)$, equation (\ref{eq_aux1}) admits a solution $V_{\varepsilon,\phi,U}\in C^{4,\alpha}_{\delta,s}(R^{3})$ satisfying
\begin{eqnarray}
\begin{cases}
||V_{\varepsilon,\phi,U}||_{C^{4,\alpha}_{\delta}(R^{3})}\leq C_{1}e^{-a/\varepsilon}\\
||V_{\varepsilon,\phi,U_{1}}-V_{\varepsilon,\phi,U_{2}}||_{C^{4,\alpha}_{\delta}(R^{3})}\leq C_{1}e^{-a/\varepsilon}||U_{1}-U_{2}||_{C^{4,\alpha}_{\delta}(\Sigma_{\varepsilon}\times\R)}\\
||V_{\varepsilon,\phi_{1},U}-V_{\varepsilon,\phi_{2},U}||_{C^{4,\alpha}_{\delta}(R^{3})}\leq C_{1}e^{-a/\varepsilon}|\phi_{1}-\phi_{2}|_{C^{4,\alpha}(\Sigma)},
\end{cases}
\end{eqnarray}
for any $U_{1},U_{2}$ satisfying $||U_{1}||_{C^{4,\alpha}_{\delta}(\Sigma_{\varepsilon}\times\R)},||U_{2}||_{C^{4,\alpha}_{\delta}(\Sigma_{\varepsilon}\times\R)}\leq 1$, for any $\phi_{1},\phi_{2}\in B_{4}(1/4)$, for some constants $a,C_{1}>0$ independent of $U$, $\varepsilon$ and $\phi$.
\label{propaux_1}
\end{proposition}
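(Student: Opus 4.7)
The plan is to solve (\ref{eq_aux1}) by rewriting it as a fixed-point problem for $V$ and applying the Banach contraction principle in the ball
\[
\mathcal{B}_{\varepsilon}:=\bigl\{V\in C^{4,\alpha}_{\delta,s}(\R^{3}):\|V\|_{C^{4,\alpha}_{\delta}(\R^{3})}\le C_{1}e^{-a/\varepsilon}\bigr\}.
\]
The exponential smallness of the radius is forced by the inhomogeneous terms of (\ref{eq_aux1}): they are all supported either where $\tilde v_{\varepsilon,\phi}$ is exponentially close to $\pm 1$, via (\ref{expdecdx})--(\ref{expdecsx}), or on annular regions where the cutoff derivatives $\nabla\chi_{1},\nabla\chi_{2}$ live, so that the weight $\psi_{\delta}$ converts spatial decay into an overall factor $e^{-a/\varepsilon}$.

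The first step is to invert the principal operator $\mathcal{T}_{\varepsilon,\phi}:=(-\Delta+\Gamma_{\varepsilon,\phi})^{2}$. Since $\gamma^{2}<\Gamma_{\varepsilon,\phi}<W''(1)+\tau_{0}$ uniformly in $(\varepsilon,\phi)$, for any $0<\delta<\gamma$ the second-order operator $-\Delta+\Gamma_{\varepsilon,\phi}$ is an isomorphism from $C^{2,\alpha}_{\delta}(\R^{3})$ onto $C^{0,\alpha}_{\delta}(\R^{3})$: existence and the weighted bound follow from the comparison principle with the barrier $\varphi_{-\delta}^{-1}$, injectivity from coercivity, and regularity from interior Schauder estimates; the restriction $\delta<\gamma$ is precisely what makes the barrier a supersolution. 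Composing twice yields an inverse $\mathcal{K}_{\varepsilon,\phi}:C^{0,\alpha}_{\delta}(\R^{3})\to C^{4,\alpha}_{\delta}(\R^{3})$ whose operator norm is uniformly bounded in $(\varepsilon,\phi)$. Because $\Gamma_{\varepsilon,\phi}$ is invariant under $T$ and under $SO_{x_{3}}(3)$, Remark \ref{rem_symm} ensures that $\mathcal{K}_{\varepsilon,\phi}$ restricts to an isomorphism of the symmetric subspaces $C^{*,\alpha}_{\delta,s}$.

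With this, rewrite (\ref{eq_aux1}) as $V=\mathcal{G}_{\varepsilon,\phi,U}(V)$, where
\[
\mathcal{G}_{\varepsilon,\phi,U}(V):=-\mathcal{K}_{\varepsilon,\phi}\Bigl[(1-\chi_{2})F(\tilde v_{\varepsilon,\phi})+\text{N}_{\varepsilon,\phi}(U)+\text{P}_{\varepsilon,\phi}(V)+(1-\chi_{1})Q_{\varepsilon,\phi}(\chi_{2}U+V)\Bigr].
\]
I would then establish three estimates in $C^{0,\alpha}_{\delta,s}(\R^{3})$: \textbf{(i)} the $V$-independent part $(1-\chi_{2})F(\tilde v_{\varepsilon,\phi})+\text{N}_{\varepsilon,\phi}(U)$ is bounded by $Ce^{-a/\varepsilon}$, since $1-\chi_{2}$ and the derivatives of $\chi_{2}$ entering (\ref{defN}) vanish for $|t|\le\tau/(2\varepsilon)+2$, and in their support $\tilde v_{\varepsilon,\phi}-(\pm1)$ and $U$ decay like $e^{-\min(\delta,\sqrt{W''(1)})|t|}$; \textbf{(ii)} $\|\text{P}_{\varepsilon,\phi}(V)\|_{C^{0,\alpha}_{\delta}}\le C\varepsilon\|V\|_{C^{4,\alpha}_{\delta}}$, because the first two summands of (\ref{defP}) carry the exponentially small factor $W''(\tilde v_{\varepsilon,\phi})-W''(1)$ on $\mathrm{supp}\,\nabla\chi_{1}$, while the third one is supported inside the tubular neighborhood where $-\Delta v_{\varepsilon,\phi}+W'(v_{\varepsilon,\phi})=O(\varepsilon)$ by the expansion (\ref{exp_lapl}) combined with the ODE (\ref{ODE}) satisfied by $v_{\star}$; \textbf{(iii)} the Taylor remainder (\ref{quadratic_w}) evaluated at $\chi_{2}U+V$ is controlled by $C(\|V\|_{C^{4,\alpha}_{\delta}}+\|U\|_{C^{4,\alpha}_{\delta}(\Sigma_{\varepsilon}\times\R)})^{2}$, and the prefactor $1-\chi_{1}$ localises it in a region where $\chi_{2}U$ is itself $O(e^{-a/\varepsilon})$, producing an extra exponential gain. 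Assembling (i)--(iii) with the uniform bound on $\mathcal{K}_{\varepsilon,\phi}$ shows that, for $\varepsilon$ small, $\mathcal{G}_{\varepsilon,\phi,U}$ sends $\mathcal{B}_{\varepsilon}$ into itself and has Lipschitz constant strictly less than $1/2$, whence a unique symmetric fixed point $V_{\varepsilon,\phi,U}$ exists.

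The Lipschitz estimates in $U$ and in $\phi$ follow by subtracting the fixed-point identities: in $U$ one gets $\text{N}_{\varepsilon,\phi}(U_{1}-U_{2})$, estimated by (i), plus a contraction-type remainder from $P_{\varepsilon,\phi}$ and $Q_{\varepsilon,\phi}$, closing the estimate with factor $e^{-a/\varepsilon}$. For $\phi$ one must additionally differentiate $\mathcal{K}_{\varepsilon,\phi}$, the cutoffs $\chi_{j}$ (which depend on $\phi$ through the change $t=z-\phi(\varepsilon y)$), the function $v_{\varepsilon,\phi}$ and the potential $\Gamma_{\varepsilon,\phi}$; each dependence is smooth with uniformly bounded $\phi$-derivatives on $B_{4}(1/4)$, yielding the claimed $Ce^{-a/\varepsilon}|\phi_{1}-\phi_{2}|_{C^{4,\alpha}(\Sigma)}$ bound. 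I expect the principal technical difficulty to be the uniform-in-$(\varepsilon,\phi)$ invertibility of $\mathcal{T}_{\varepsilon,\phi}$ on the exponentially weighted spaces \textemdash\ in particular constructing barriers for $-\Delta+\Gamma_{\varepsilon,\phi}$ that respect the rate $\delta$ even though $\Gamma_{\varepsilon,\phi}$ is only $\gamma^{2}$-bounded below near the transition region and jumps up to $W''(1)+O(\varepsilon)$ away from it; the quadratic estimate (iii), which must absorb the non-local tail of $\chi_{2}U$ outside the tubular neighborhood, is a secondary but delicate point.
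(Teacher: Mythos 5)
Your proposal follows essentially the same route as the paper: invert $(-\Delta+\Gamma_{\varepsilon,\phi})^{2}$ on exponentially weighted H\"older spaces by composing two applications of the barrier/maximum-principle argument for $-\Delta+\Gamma_{\varepsilon,\phi}$ (with $\delta<\gamma$ entering exactly as you say) together with Schauder estimates, then run a contraction mapping on the ball of radius $C_{1}e^{-a/\varepsilon}$ in $C^{4,\alpha}_{\delta,s}(\R^{3})$, obtaining the symmetries by uniqueness and the Lipschitz dependence on $U$ and $\phi$ by subtracting the fixed-point identities, just as in Section 6. The only cosmetic difference is that the paper formulates the linear solvability with a strict loss of decay rate (data in $C^{0,\alpha}_{\gamma}$, solution in $C^{4,\alpha}_{\delta}$ via an intermediate rate $\delta<\delta'<\gamma$), whereas you keep the single rate $\delta<\gamma$ on both sides; both versions are correct and lead to the same estimates.
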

The proof of Proposition \ref{propaux_1} is based on a fixed point argument (see section $6$).\\
 
Now we consider equation (\ref{eq_aux2}). In order to solve it, we need to extend it to the whole $\Sigma_{\varepsilon}\times\R$. First we observe that 
\begin{eqnarray}\notag
F^{'}(v_{\varepsilon,\phi})U=\mathcal{L}^{2}_{\varepsilon}U+\text{R}_{\varepsilon,\phi}(U),
\end{eqnarray}
where
\begin{eqnarray}\notag
\text{R}_{\varepsilon,\phi}(U):=\mathcal{L}_{\varepsilon}(\text{D}+W^{''}(\tilde{v}_{\varepsilon,\phi})-W^{''}(v_{\star}))(U)
+(\text{D}+W^{''}(\tilde{v}_{\varepsilon,\phi})-W^{''}(v_{\star}))\mathcal{L}_{\varepsilon}(U)\\\notag
+(\text{D}+W^{''}(\tilde{v}_{\varepsilon,\phi})-W^{''}(v_{\star}))^{2}U+
W^{'''}(\tilde{v}_{\varepsilon,\phi})(-\Delta\tilde{v}_{\varepsilon,\phi}+W^{'}(\tilde{v}_{\varepsilon,\phi}))U,
\end{eqnarray}
D is defined in (\ref{exp_laplt}). Therefore we reduced ourselves to consider 
\begin{eqnarray}
\mathcal{L}_{\varepsilon}^{2}U=-\chi_{4}F(\tilde{v}_{\varepsilon,\phi})-\chi_{1}Q_{\varepsilon,\phi}(U+V)-\chi_{4}\text{R}_{\varepsilon,\phi}(U)
-\chi_{1}\text{M}_{\varepsilon,\phi}(V)
\end{eqnarray}
in the entire $\Sigma_{\varepsilon}\times\R$. We would like to solve this equation with a fixed point argument, but, in order to do so, the right-hand side must be orthogonal to the Kernel of $\mathcal{L}^{2}_{\varepsilon}$, that is the one dimensional space generated by $v_{\star}^{'}(t)$, hence we can solve the problem
\begin{eqnarray}
\mathcal{L}_{\varepsilon}^{2}U=-\chi_{4}F(\tilde{v}_{\varepsilon,\phi})-\text{T}(U,V_{\varepsilon,\phi,U},\phi)
+p(y)v_{\star}^{'}(t)\label{proj_prob}\\\notag
\int_{-\infty}^{\infty}U(y,t)v_{\star}^{'}(t)dt=0 \text{ for any }y\in\Sigma_{\varepsilon},
\end{eqnarray}
where we have set, for the sake of simplicity, 
\begin{eqnarray}\notag
\text{T}(U,V,\phi):=\chi_{1}Q_{\varepsilon,\phi}(U+V)-\chi_{4}\text{R}_{\varepsilon,\phi}(U)-\chi_{1}\text{M}_{\varepsilon,\phi}(V)\\\notag
p(y):=\frac{1}{c_{\star}}\int_{-\infty}^{\infty}\big(\chi_{4}F(\tilde{v}_{\varepsilon,\phi})
+\text{T}(U,V_{\varepsilon,\phi,U},\phi)\big)(y,t)v_{\star}^{'}(t)dt
\end{eqnarray}
and $c_{\star}:=\int_{-\infty}^{\infty}(v_{\star}^{'}(t))^{2}dt$.

Before stating the next proposition, let us observe that any function $U:\Sigma_{\varepsilon}\times\R\to\R$ can be written as the sum of an even part and an odd part, the even part being $U_{e}(y,t):=\frac{1}{2}(U(y,t)+U(y,-t))$ and the odd part being $U_{o}(y,t):=\frac{1}{2}(U(y,t)+U(y,-t))$.
\begin{proposition}
For any $\varepsilon>0$ small enough and for any $\phi\in B_{4}(1/4)$, we can find a solution $U_{\varepsilon,\phi}\in\mathcal{E}^{4,\alpha}_{\delta,s}(\Sigma_{\varepsilon}\times\R)$ to equation (\ref{proj_prob}) satisfying
\begin{eqnarray}
\begin{cases}
||U_{\varepsilon,\phi}||_{C^{4,\alpha}_{\delta}(\Sigma_{\varepsilon}\times\R)}\leq C_{2}\varepsilon^{3}\\
||(U_{\varepsilon,\phi})_{o}||_{C^{4,\alpha}_{\delta}(\Sigma_{\varepsilon}\times\R)}\leq C_{2}\varepsilon^{4}\\
||U_{\varepsilon,\phi_{1}}-U_{\varepsilon,\phi_{2}}||_{C^{4,\alpha}_{\delta}(\Sigma_{\varepsilon}\times\R)}\leq C_{2}\varepsilon^{3}|\phi_{1}-\phi_{2}|_{C^{4,\alpha}(\Sigma)},
\end{cases}
\end{eqnarray}
for any $\phi_{1},\phi_{2}\in B_{4}(1/4)$, for some constant $C_{2}>0$ independent of $\varepsilon$.
\label{propaux_2}
\end{proposition}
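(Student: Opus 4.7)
The strategy is a contraction mapping argument for (\ref{proj_prob}) built around a right inverse of $\mathcal{L}_{\varepsilon}^{2}$ on the orthogonal complement of $v_{\star}'(t)$. The first step is the linear theory: on the subspace $\mathcal{E}^{k,\alpha}_{\delta,s}(\Sigma_{\varepsilon}\times\R)$, the operator $\mathcal{L}_{\varepsilon}=-(\Delta_{\Sigma_{\varepsilon}}+\partial_{tt})+W''(v_{\star}(t))$ enjoys a spectral gap because the one-dimensional operator $-\partial_{tt}+W''(v_{\star}(t))$ on $L^{2}(\R)$ has $v_{\star}'$ as its (simple) null eigenfunction and a strictly positive second eigenvalue. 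A separation of variables in $y$ and $t$, together with the evenness of $W''(v_{\star}(t))$ and the symmetry assumptions built into the subscript $s$, yields a bounded right inverse $\mathcal{L}_{\varepsilon}^{-1}:\mathcal{E}^{0,\alpha}_{\delta,s}\to\mathcal{E}^{2,\alpha}_{\delta,s}$ whose operator norm is independent of $\varepsilon$, provided $0<\delta<\sqrt{W''(1)}$; iterating gives a bounded right inverse $\mathcal{L}_{\varepsilon}^{-2}:\mathcal{E}^{0,\alpha}_{\delta,s}\to\mathcal{E}^{4,\alpha}_{\delta,s}$. The Lagrange multiplier $p(y)v_{\star}'(t)$ in (\ref{proj_prob}) precisely enforces that the right-hand side lies in the range of $\mathcal{L}_{\varepsilon}^{2}$.

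The second step is the size of the leading error $\chi_{4}F(\tilde v_{\varepsilon,\phi})$ in the weighted norm. Using the expansion (\ref{exp_laplt}) and the ODE $-v_{\star}''+W'(v_{\star})=0$, the naive error $-\Delta\tilde v_{\varepsilon,\phi}+W'(\tilde v_{\varepsilon,\phi})$ equals $\varepsilon\tilde H(\varepsilon y,\varepsilon(t+\phi))v_{\star}'(t)+O(\varepsilon^{2})$ pointwise in the tube; applying $-\Delta+W''(\tilde v_{\varepsilon,\phi})$ to this and expanding $\tilde H$ as in the geometric formulas of Section~2, one verifies that $\chi_{4}F(\tilde v_{\varepsilon,\phi})=O(\varepsilon^{3})$ in $C^{0,\alpha}_{\delta,s}(\Sigma_{\varepsilon}\times\R)$. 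Together with Proposition \ref{propaux_1}, which gives an exponentially small $V_{\varepsilon,\phi,U}$, and with the structural estimates $\|\mathrm{M}_{\varepsilon,\phi}(V)\|,\|\mathrm{R}_{\varepsilon,\phi}(U)\|\lesssim\varepsilon\|U\|$ and $\|Q_{\varepsilon,\phi}(U+V)\|\lesssim\|U\|^{2}+\|V\|^{2}$ (coming from (\ref{defF'}), (\ref{defF''}), (\ref{defM}) and (\ref{quadratic_w})), one sets up the map
\begin{equation*}
\mathcal{T}_{\varepsilon,\phi}(U):=\mathcal{L}_{\varepsilon}^{-2}\bigl[-\chi_{4}F(\tilde v_{\varepsilon,\phi})-\mathrm{T}(U,V_{\varepsilon,\phi,U},\phi)+p_{U}(y)v_{\star}'(t)\bigr],
\end{equation*}
where $p_{U}$ is chosen as in the statement to put the bracketed quantity in $\mathcal{E}^{0,\alpha}_{\delta,s}$. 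A direct verification shows that $\mathcal{T}_{\varepsilon,\phi}$ maps the ball of radius $C_{2}\varepsilon^{3}$ in $\mathcal{E}^{4,\alpha}_{\delta,s}$ into itself and is a contraction there, yielding the fixed point $U_{\varepsilon,\phi}$ with the first claimed bound.

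The parity estimate on $(U_{\varepsilon,\phi})_{o}$ comes from a symmetry analysis. Since $W$ is even and $v_{\star}$ is odd, $v_{\star}'$ is even and $W''(v_{\star})$ is even in $t$; consequently $\mathcal{L}_{\varepsilon}$ preserves the parity decomposition $U=U_{e}+U_{o}$. In the expansion (\ref{exp_laplt}) one sees that each power of $\varepsilon z=\varepsilon(t+\phi)$ carries, in fact, at most one factor of $t$ in its odd piece, so the odd-in-$t$ part of $\chi_{4}F(\tilde v_{\varepsilon,\phi})$ gains an extra power of $\varepsilon$; the same is true for the odd projection of $\mathrm{T}(U,V,\phi)$ once the leading $U_{e}$ is substituted. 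Splitting the fixed point equation into even and odd components and redoing the contraction for $U_{o}$ with the improved right-hand side of size $\varepsilon^{4}$ produces the second bound.

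Finally, Lipschitz dependence on $\phi$ is obtained by subtracting the equations for $U_{\varepsilon,\phi_{1}}$ and $U_{\varepsilon,\phi_{2}}$: the difference satisfies a linear equation whose source is controlled by $|\phi_{1}-\phi_{2}|_{C^{4,\alpha}(\Sigma)}$ via the Lipschitz statements of Proposition \ref{propaux_1} together with the explicit $\phi$-dependence of $\tilde v_{\varepsilon,\phi}$, $\chi_{m}$ and the coefficients in $\mathrm{D}$; the same contraction as before then gives the third inequality with constant $C_{2}\varepsilon^{3}$. The principal difficulty, as I see it, is the careful bookkeeping in the parity argument: one must track how the $\phi$-corrected variable $t=z-\phi(\varepsilon y)$ mixes the even and odd components and verify that the mixing produces only admissible terms at each order, so that the odd-part gain of one power of $\varepsilon$ actually survives through the full nonlinear coupling with $V_{\varepsilon,\phi,U}$.
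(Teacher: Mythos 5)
Your overall scheme coincides with the paper's: an $\varepsilon$-uniform right inverse of $\mathcal{L}_{\varepsilon}^{2}$ on the fiberwise-orthogonal symmetric space (Proposition \ref{propflat}), a contraction on the ball of radius $C_{2}\varepsilon^{3}$ for the map obtained from (\ref{proj_prob}), and the Lipschitz estimate in $\phi$ by subtracting the two equations and reabsorbing. The genuine gap is in your justification of the second estimate, $||(U_{\varepsilon,\phi})_{o}||_{C^{4,\alpha}_{\delta}}\leq C_{2}\varepsilon^{4}$. You attribute the extra power of $\varepsilon$ in the odd part of $\chi_{4}F(\tilde v_{\varepsilon,\phi})$ to the claim that "each power of $\varepsilon z=\varepsilon(t+\phi)$ carries at most one factor of $t$ in its odd piece"; this is not the operative mechanism, and as stated it does not give the result. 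What actually controls the parity is the structure of $v_{\star}$ and of the corrector: $v_{\star}^{'}$, $v_{\star}^{'''}$, $tv_{\star}^{''}$, $\eta^{'}$, $(L_{\star}\eta)^{'}$ are even in $t$, while $v_{\star}^{''}$, $\eta$, $W^{'''}(v_{\star})$ are odd, and, since $\phi\in B_{4}(1/4)$ is not assumed to be $O(\varepsilon)$, the expansion of Section $5$ produces genuinely odd terms of size $\varepsilon^{3}$ proportional to $\phi$, for instance $2H\Delta_{\Sigma}\phi\,v_{\star}^{''}$ (from $T^{1}$ and $T^{2}$), $-2(2<A,\nabla^{2}\phi>+(\nabla_{\Sigma}H,\nabla_{\Sigma}\phi))v_{\star}^{''}$ (from $T^{4}$) and $\phi(2H|A|^{2}-4\tr A^{3})v_{\star}^{''}$ (from $T^{1}$). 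These survive only because the correction $\varepsilon^{3}L\phi\,\eta$ inside $\tilde v_{\varepsilon,\phi}$ contributes $-\varepsilon^{3}L\phi\,v_{\star}^{''}$ through $L_{\star}^{2}\eta=-v_{\star}^{''}$ (the term $T^{8}$), and $L$ was defined in (\ref{choiceL}) precisely so that the sum of these odd coefficients vanishes. In other words, the $\varepsilon^{4}$ bound on the odd part is a designed cancellation built into the approximate solution, not a structural feature of the expansion (\ref{exp_laplt}); your plan never invokes this cancellation, and you yourself flag the parity bookkeeping as the principal difficulty without resolving it, so the key step of the second estimate is unproved.

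Once that cancellation is put in, the remaining parity step can be done either your way (splitting the fixed-point equation into even and odd components) or, more simply, as the paper does: since $W^{''}(v_{\star}(t))$ is even in $t$, the operator $\mathcal{L}_{\varepsilon}^{2}$ commutes with $t\mapsto -t$, and Lemma \ref{Lemma_oe} bounds $U_{o}$ by the odd part of the right-hand side of the already-solved equation; combined with $||\text{T}(U,V_{\varepsilon,\phi,U},\phi)||_{C^{0,\alpha}_{\delta}}\leq c\varepsilon^{4}$ and the evenness in $t$ of $p(y)v_{\star}^{'}(t)$, this yields the bound with no second contraction. A smaller caveat on your linear step: the uniform bound for $G_{\varepsilon}$ is not a pure $L^{2}$ spectral-gap statement; the paper gets existence from the Riesz representation theorem (where the gap on the full orthogonal complement is only $\varepsilon^{2}\lambda_{1}$) and the $\varepsilon$-independent H\"older estimate from the fiberwise orthogonality via Proposition $8.3$ of \cite{PR}, followed by the bootstrap to pass from $\mathcal{L}_{\varepsilon}$ to $\mathcal{L}_{\varepsilon}^{2}$; your "separation of variables" sketch should be understood as standing in for that argument.
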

The proof of Proposition \ref{propaux_2} will be given in section $6$.\\

\subsection{The bifurcation equation}

In conclusion, we will show that it is possible to find $\phi$ that solves
\begin{eqnarray}
\int_{-\infty}^{\infty}\big(\chi_{4}F(\tilde{v}_{\varepsilon,\phi})+\text{T}(U,V_{\varepsilon,\phi,U},\phi)\big)(y,t)v_{\star}^{'}(t)dt=0
\label{eqbifo_1}
\end{eqnarray}
for any $y\in\Sigma_{\varepsilon}$ and such that the real solution $u_{\varepsilon}(x):=v_{\varepsilon,\phi}(x/\varepsilon)+w_{\varepsilon,\phi}(x/\varepsilon)$ satisfies the volume constraint (\ref{constraint_vol}). First we note that, by the change of variables $x^{'}=x/\varepsilon$,
\begin{eqnarray}\notag
4\pi^{2}\sqrt{2}=\int_{\R^{3}}(1-u_{\varepsilon}(x))dx=\varepsilon^{3}\int_{\R^{3}}1-(v_{\varepsilon,\phi}(x)+w_{\varepsilon,\phi}(x))dx,
\end{eqnarray}
the latter integral can be calculated exploiting the natural change of variables 
\begin{eqnarray}
\begin{cases}
x_{1}=\varepsilon^{-1}\cos(\varepsilon\text{y}_{2})\big((z+\varepsilon^{-1})\cos(\varepsilon\text{y}_{1})+\varepsilon^{-1}\sqrt{2}\big),\\
x_{2}=\varepsilon^{-1}\sin(\varepsilon\text{y}_{2})\big((z+\varepsilon^{-1})\cos(\varepsilon\text{y}_{1})+\varepsilon^{-1}\sqrt{2}\big),\\
x_{3}=\varepsilon^{-1}(z+\varepsilon^{-1})\sin(\varepsilon\text{y}_{1}).
\end{cases}
\label{toric_coord}
\end{eqnarray}
on $V_{\tau/\varepsilon}$, induced by the parametrization $Y_{\varepsilon}(\text{y})=\varepsilon^{-1}Y(\varepsilon\text{y})$, where
\begin{eqnarray}
Y(\vartheta_{1},\vartheta_{2}):=(\cos\vartheta_{2}(\cos\vartheta_{1}+\sqrt{2}),\sin\vartheta_{2}(\cos\vartheta_{1}+\sqrt{2}),\sin\vartheta_{2})
\label{param_Sigma}
\end{eqnarray}
and $(\vartheta_{1},\vartheta_{2})=\varepsilon(\text{y}_{1},\text{y}_{2})\in[0,2\pi)^{2}$.
\begin{proposition}
For any $\varepsilon>0$ small enough, $c>0$ and $\phi\in C^{4,\alpha}(\Sigma)_{s}$ satisfying $|\phi|_{C^{4,\alpha}(\Sigma)}\leq c\varepsilon$,
\begin{eqnarray}\notag
\int_{\R^{3}}1-(v_{\varepsilon,\phi}(x)+w_{\varepsilon,\phi}(x))dx=\varepsilon^{-3}4\pi^{2}\sqrt{2}
+2\varepsilon^{-2}\int_{\Sigma}\phi(\zeta)d\sigma(\zeta)\\\notag
+8\sqrt{2}\pi^{2}\varepsilon^{-1}\int_{0}^{6+\tau/2\varepsilon}t(1-v_{\star}(t))dt+2G_{\varepsilon}(\phi),
\end{eqnarray}
with $G_{\varepsilon}$ fulfilling  
\begin{eqnarray}
\begin{cases}
|G_{\varepsilon}(\phi)|\leq c,\\\notag
|G_{\varepsilon}(\phi_{1})-G_{\varepsilon}(\phi_{2})|\leq c|\phi_{1}-\phi_{2}|_{C^{4,\alpha}(\Sigma)},
\end{cases}
\label{small_G}
\end{eqnarray}
for any $\phi,\phi_{1},\phi_{2}\in C^{4,\alpha}(\Sigma)_{s}$ satisfying $|\phi|_{C^{4,\alpha}(\Sigma)},|\phi_{1}|_{C^{4,\alpha}(\Sigma)},|\phi_{2}|_{C^{4,\alpha}(\Sigma)}\leq c\varepsilon$.
\label{prop_int_v}
\end{proposition}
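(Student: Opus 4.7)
The plan is to split
\[
\int_{\R^3}\bigl(1 - v_{\varepsilon,\phi} - w_{\varepsilon,\phi}\bigr)\,dx = 2|\Sigma_{\varepsilon,\phi}^{-}| + \int_{\R^3}(\mathbb{H}_\phi - v_{\varepsilon,\phi})\,dx - \int_{\R^3} w_{\varepsilon,\phi}\,dx,
\]
where $\mathbb{H}_\phi$ is the step function equal to $-1$ on the bounded component enclosed by $\Sigma_{\varepsilon,\phi}$ and $+1$ outside. The three summands will produce, respectively, the first two terms of the claimed expansion, the third term, and a remainder absorbed into $G_\varepsilon$.

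For the shell volume, I would parametrise the thin region between $\Sigma_\varepsilon$ and $\Sigma_{\varepsilon,\phi}$ by $(y,s)\mapsto y+s\,\nu(\varepsilon y)$ with $s$ between $0$ and $\phi(\varepsilon y)$. The volume element is $J_\varepsilon(y,s)\,dA_{\Sigma_\varepsilon}(y)\,ds$, and the expansion of $\det\tilde g_\varepsilon$ carried out in Section 2 gives $J_\varepsilon(y,s) = 1-\varepsilon s\,H(\varepsilon y) + O((\varepsilon s)^2)$. Using $|\Sigma_\varepsilon^{-}| = 2\sqrt{2}\pi^2\varepsilon^{-3}$ and the rescaling $dA_{\Sigma_\varepsilon}(y) = \varepsilon^{-2}d\sigma(\zeta)$, integration in $s$ and then in $y$ yields
\[
2|\Sigma_{\varepsilon,\phi}^{-}| = 4\sqrt{2}\pi^2\varepsilon^{-3} + 2\varepsilon^{-2}\int_\Sigma \phi\,d\sigma + R_1(\varepsilon,\phi),
\]
with $|R_1| \leq C\varepsilon^{-1}\|\phi\|_\infty^2 \leq C\varepsilon$ and Lipschitz constant of order $\varepsilon^{-1}\|\phi\|_\infty \leq C$.

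For the transition integral I would observe that $\mathbb{H}_\phi = \mathbb{H} = v_{\varepsilon,\phi}$ outside the support of $\chi_5$, so the integrand lives in $\{|t|\leq \tau/(2\varepsilon)+7\}$. In the Fermi coordinates $(y,t)$ of (\ref{deft}), one has $dx = J_\varepsilon(y,t+\phi(\varepsilon y))\,dA_{\Sigma_\varepsilon}(y)\,dt$. Writing the ansatz as $\tilde v_{\varepsilon,\phi}(y,t) = v_\star(t) + \varepsilon\,r_\varepsilon(y,t;\phi)$ with $r_\varepsilon$ bounded and Lipschitz in $\phi$, the leading integrand becomes $q(t) := \sgn(t)-v_\star(t)$, which is \emph{odd} in $t$ and decays exponentially. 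Since $\chi_5$ is even in $t$, expanding
\[
J_\varepsilon(y,t+\phi) = 1 - \varepsilon(t+\phi(\varepsilon y))\,H(\varepsilon y) + O(\varepsilon^2(t+\phi)^2),
\]
parity cancels the $O(1)$ and $O(\varepsilon\phi)$ contributions, leaving
\[
\int_{\R}\chi_5(t)\,q(t)\,J_\varepsilon(y,t+\phi)\,dt = -2\varepsilon\,H(\varepsilon y)\int_0^{6+\tau/2\varepsilon} t\bigl(1-v_\star(t)\bigr)\,dt + \text{lower order},
\]
the replacement of the $\chi_5$-transition interval $[\tau/(2\varepsilon)+6,\tau/(2\varepsilon)+7]$ by the truncated limit of integration costing only an exponentially small error by (\ref{expdecdx}). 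Integrating over $\Sigma_\varepsilon$ and using the direct computation $\int_\Sigma H\,d\sigma = -4\sqrt{2}\pi^2$ for the Clifford torus (which follows from the sign convention $A_{ij} = -\langle\partial_i\nu,\partial_j X\rangle$ adopted in (\ref{def_A})) recovers the third term of the expansion.

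For the correction $w_{\varepsilon,\phi} = \chi_2 U_{\varepsilon,\phi} + V_{\varepsilon,\phi,U_{\varepsilon,\phi}}$, Proposition \ref{propaux_2} gives $\|U_{\varepsilon,\phi}\|_{C^{4,\alpha}_\delta(\Sigma_\varepsilon\times\R)} \leq C\varepsilon^3$, and the weight $\psi_\delta$ forces $\int_\R|U_{\varepsilon,\phi}(y,t)|\,dt = O(\varepsilon^3)$ uniformly in $y$, so $\int\chi_2 U\,dx = O(\varepsilon)$; Proposition \ref{propaux_1} gives $\int|V|\,dx = O(e^{-a/\varepsilon})$; the Lipschitz-in-$\phi$ estimates pass through from those propositions. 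Collecting $R_1$, the lower-order terms of the transition integral, and the $w$-contribution into $2G_\varepsilon(\phi)$ yields $|G_\varepsilon|\leq c$ and the stated Lipschitz bound. The main obstacle will be checking that the Lipschitz constants of these remainders are all of order $1$ and not some inverse power of $\varepsilon$: the volume elements contribute factors up to $\varepsilon^{-3}$, and it is precisely the hypothesis $|\phi|_{C^{4,\alpha}(\Sigma)} \leq c\varepsilon$ that provides enough powers of $\varepsilon$ to compensate, most visibly in the quadratic shell term $R_1$ and in the $\phi$-dependence of $\tilde v_{\varepsilon,\phi}$, which enters through derivatives of $\phi$ up to order four.
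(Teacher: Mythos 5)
Your argument is correct, and its skeleton is in fact the same as the paper's: the splitting $1-v_{\varepsilon,\phi}-w_{\varepsilon,\phi}=(1-\mathbb{H}_{\phi})+(\mathbb{H}_{\phi}-v_{\varepsilon,\phi})-w_{\varepsilon,\phi}$ is algebraically identical to the paper's decomposition $2|\Sigma_{\varepsilon,\phi}|_{3}+V_{2}-V_{1}-\int_{B}v_{\varepsilon,\phi}-\int w_{\varepsilon,\phi}$, since the integral of $\mathbb{H}_{\phi}$ over the tubular region is exactly $V_{2}-V_{1}$. Where you genuinely differ is in how the pieces are evaluated: the paper plugs in the explicit toric parametrization (\ref{toric_coord}) and the formula (\ref{int_Sigma}), which yields the exact expansion (\ref{vol_Sigma}) of the enclosed volume (your bound $|R_{1}|\leq C\varepsilon^{-1}\|\phi\|_{\infty}^{2}$ is precisely the $\phi^{2},\phi^{3}$ terms there) and makes the coefficient $8\sqrt{2}\pi^{2}$ emerge from elementary $\vartheta_{1}$-integrals without ever invoking the total mean curvature; you instead use the intrinsic Fermi-coordinate volume element $1-\varepsilon zH+\varepsilon^{2}z^{2}K$, parity in $t$ of $\sgn(t)-v_{\star}$, $\eta$ and $\chi_{5}$, and the identity $\int_{\Sigma}H\,d\sigma=-4\sqrt{2}\pi^{2}$, which is indeed correct under the sign convention (\ref{def_A}) and is the one computation you must still carry out by hand (you rightly flag it). Your route is more intrinsic and would transfer to other surfaces with $\int_{\Sigma}H\,d\sigma$ as the only surface-specific input, while the paper's explicit computation buys the exact lower-order terms and avoids the parity bookkeeping; your treatment of the $w$-contribution (integrating the $t$-decay of $U_{\varepsilon,\phi}$ against $|\Sigma_{\varepsilon}|\sim\varepsilon^{-2}$ to get $O(\varepsilon)$, plus the exponentially small $V$ from Proposition \ref{propaux_1}) is if anything more careful than the paper's one-line weighted-norm bound, and your closing observation that the hypothesis $|\phi|_{C^{4,\alpha}(\Sigma)}\leq c\varepsilon$ is exactly what keeps the Lipschitz constants of the remainders of order one matches how the hypothesis is used in the paper.
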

The proof of this Proposition will be given in Section $7$. Therefore, in terms of $\phi$, equation (\ref{constraint_vol}) is equivalent to equation
\begin{eqnarray}
\int_{\Sigma}\phi(\zeta)d\sigma(\zeta)=-4\sqrt{2}\pi^{2}\varepsilon\int_{0}^{6+\tau/2\varepsilon}t(1-v_{\star}(t))dt-\varepsilon^{2}G_{\varepsilon}(\phi).
\label{eqbifo_2}
\end{eqnarray}
The system of equations (\ref{eqbifo_1}) and (\ref{eqbifo_2}) is known as \textit{bifurcation equation}, and it will be solved by a fixed point argument, that will be explained in this Proposition, whose proof will be carried out in Section $7$. 
\begin{proposition}
For any $\varepsilon>0$ small enough, the system of equations (\ref{eqbifo_1}) and (\ref{eqbifo_2}) admits a solution $\phi\in C^{4,\alpha}(\Sigma)_{s}$ satisfying $|\phi|_{C^{4,\alpha}(\Sigma)}\leq C_{3}\varepsilon$, for some constant $C_{3}=C_{3}(W,\tau)>0$.
\label{prop_bifo}
\end{proposition}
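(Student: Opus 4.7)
The plan is to convert the bifurcation system (\ref{eqbifo_1})--(\ref{eqbifo_2}) into a fixed-point equation for the isomorphism $\mathcal{L}$ introduced in Section~3, and then to apply Banach's contraction principle in the ball $\{|\phi|_{C^{4,\alpha}(\Sigma)} \le C_3 \varepsilon\}$. The key step is a careful geometric expansion of the projection
$$ p_0(\varepsilon y) := \frac{1}{c_\star}\int_{-\infty}^{\infty} \chi_4 F(\tilde{v}_{\varepsilon,\phi})(y,t)\, v_\star'(t)\, dt. $$
Using expansion (\ref{exp_laplt}) of $\Delta$ in the shifted Fermi coordinates, the Taylor expansion of $\tilde H$, and the fact that $v_\star$ is odd (so that $\int t^{2k+1}(v_\star')^2\,dt = 0$), I would show that
$$ p_0(\varepsilon y) = \varepsilon^2\bigl(-\Delta_\Sigma H + \tfrac12 H(H^2 - 2|A|^2)\bigr)(\varepsilon y) + \varepsilon^3 \tilde{L}_0\phi(\varepsilon y) + \varepsilon^4 \mathcal{R}_1(\varepsilon,\phi)(y), $$
with $\mathcal{R}_1$ uniformly bounded and Lipschitz in $\phi$ on balls of radius $c\varepsilon$ in $C^{4,\alpha}(\Sigma)_s$. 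The $\varepsilon^2$ coefficient vanishes identically because $\Sigma$ is a Willmore surface. By Propositions~\ref{propaux_1} and \ref{propaux_2}, the contribution of $\text{T}(U_{\varepsilon,\phi},V_{\varepsilon,\phi,U_{\varepsilon,\phi}},\phi)$ to the left-hand side of (\ref{eqbifo_1}) is also of order $\varepsilon^4$ with matching Lipschitz behaviour in $\phi$.

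Introducing a Lagrange multiplier $\lambda$ associated with the volume constraint and dividing (\ref{eqbifo_1}) by $c_\star \varepsilon^3$, the bifurcation system rewrites as
$$ \mathcal{L}(\phi, \lambda) = \Bigl(\varepsilon\, \widehat{\mathcal{R}}_1(\varepsilon, \phi),\; \varepsilon\, \beta_\varepsilon + \varepsilon^2\, \widehat{\mathcal{R}}_2(\varepsilon, \phi)\Bigr), $$
where $\beta_\varepsilon := -4\sqrt{2}\pi^{2}\int_0^{6+\tau/2\varepsilon} t(1-v_\star(t))\,dt$ is uniformly bounded and $\widehat{\mathcal{R}}_2$ is controlled using Proposition~\ref{prop_int_v}. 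Since $\mathcal{L}: C^{4,\alpha}(\Sigma)_s \times \mathbb{R} \to C^{0,\alpha}(\Sigma)_s \times \mathbb{R}$ is an isomorphism with bounded inverse (this uses the nondegeneracy of the Clifford Torus up to conformal transformations together with the symmetry class, which rules out the conformal directions in the kernel of $\tilde{L}_0$), the system is equivalent to a fixed-point problem $(\phi,\lambda) = \mathcal{T}_\varepsilon(\phi,\lambda)$. Standard estimates show that $\mathcal{T}_\varepsilon$ sends $\{|\phi|_{C^{4,\alpha}(\Sigma)} \le C_3\varepsilon\} \times \{|\lambda| \le C_3\varepsilon\}$ into itself and is a contraction for $C_3$ large and $\varepsilon$ small, yielding the desired $(\phi,\lambda_\varepsilon)$ together with the claimed estimate on $\phi$.

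The main obstacle is the expansion in the first step. One must expand
$$ F(\tilde{v}_{\varepsilon,\phi}) = -\Delta(-\Delta \tilde{v}_{\varepsilon,\phi} + W'(\tilde{v}_{\varepsilon,\phi})) + W''(\tilde{v}_{\varepsilon,\phi})(-\Delta \tilde{v}_{\varepsilon,\phi} + W'(\tilde{v}_{\varepsilon,\phi})) $$
up to order $\varepsilon^4$ by iterating (\ref{exp_laplt}), and then identify, among many geometric terms produced, exactly the six contributions that assemble into the Willmore Jacobi operator $\tilde{L}_0\phi$ of (\ref{willmore_lin}). This identification hinges on the relations (\ref{geom_rel}), in particular the Codazzi identity, on integration by parts against $v_\star'$, and on the Willmore equation satisfied by $\Sigma$, which eliminates the $\varepsilon^2$ obstruction and moves the leading nontrivial order to $\varepsilon^3$. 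Once this expansion has been established, the rest of the proof is a routine application of the Banach fixed-point theorem to the inverse of the isomorphism $\mathcal{L}$.
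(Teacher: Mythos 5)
Your overall strategy is the paper's: project the error against $v_\star'$, recognize the Willmore operator and its linearization $\tilde L_0$, and convert the system (\ref{eqbifo_1})--(\ref{eqbifo_2}) into a fixed point problem for the inverse of the isomorphism $\mathcal{L}(\phi,\lambda)=(\tilde L_0\phi+\lambda,\int_\Sigma\phi)$, solved by contraction on a ball of radius $C_3\varepsilon$. However, there are two problems with your expansion, one of bookkeeping and one genuine. First, the orders are off by one power of $\varepsilon$: for the approximate solution $\tilde v_{\varepsilon,\phi}$ (which already contains the correction $\varepsilon^2(\psi+\varepsilon L\phi)\eta$ cancelling the $\varepsilon^2(H^2-2|A|^2)v_\star''$ term), the projection $\int F(\tilde v_{\varepsilon,\phi})v_\star'\,dt$ carries the Willmore expression at order $\varepsilon^3$ and $-c_\star\tilde L_0\phi$ at order $\varepsilon^4$ (Proposition \ref{solve_eq_bifo}), not $\varepsilon^2$ and $\varepsilon^3$ as you claim.

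The genuine gap is your assertion that, by Propositions \ref{propaux_1} and \ref{propaux_2}, the projected contribution of $\text{T}(U_{\varepsilon,\phi},V_{\varepsilon,\phi,U_{\varepsilon,\phi}},\phi)$ is automatically one order smaller than the $\tilde L_0\phi$ term ``with matching Lipschitz behaviour.'' This is false at the level of the available a priori bounds: the operator $\text{R}_{\varepsilon,\phi}$ has coefficients of order $\varepsilon$ and $U_{\varepsilon,\phi}$ is only of order $\varepsilon^3$, so the projection $\tilde p_4(\phi)=c_\star^{-1}\int\chi_4\text{R}_{\varepsilon,\phi}(U_{\varepsilon,\phi})v_\star'\,dt$ is a priori of order $\varepsilon^4$ --- the \emph{same} order as $\varepsilon^4\tilde L_0\phi$ --- and its Lipschitz constant in $\phi$ is also of order $\varepsilon^4$. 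Feeding this into your fixed point map after dividing by $\varepsilon^4$ gives an $O(1)$ (not $O(\varepsilon)$) right-hand side and an $O(1)$ Lipschitz constant, so neither the bound $|\phi|_{C^{4,\alpha}(\Sigma)}\le C_3\varepsilon$ nor the contraction property follows. The paper closes this precisely with Lemma \ref{lemmaR}: writing $U_{\varepsilon,\phi}=(U_{\varepsilon,\phi})_o+(U_{\varepsilon,\phi})_e$, the refined estimate $\|(U_{\varepsilon,\phi})_o\|\le c\varepsilon^4$ of Proposition \ref{propaux_2} handles the odd part, while for the even part the order-$\varepsilon$ coefficients of $\text{R}_{\varepsilon,\phi}$ produce functions odd in $t$, whose integral against $v_\star'$ vanishes, so that $\tilde p_4=O(\varepsilon^5)$ and is Lipschitz with constant $O(\varepsilon^5)$. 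Your proposal never invokes this parity cancellation nor the odd-part estimate, and without them the contraction argument at the claimed scale does not go through.
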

\begin{remark}
As we will see in the proof of Proposition \ref{solve_eq_bifo} below, the Willmore equation will appear at order $\varepsilon^{3}$, while the linearized operator
\begin{eqnarray}
\tilde{L}_{0}\phi=L_{0}^{2}\phi+\frac{3}{2}H^{2}L_{0}\phi-H(\nabla_{\Sigma}\phi,\nabla_{\Sigma}H)+2(A\nabla_{\Sigma}\phi,\nabla_{\Sigma}H)+\\\notag
2H<A,\nabla^{2}\phi>+\phi(2<A,\nabla^{2}H>+|\nabla_{\Sigma}H|^{2}+2H\tr A^{3}).
\end{eqnarray}
will appear at order $\varepsilon^{4}$, thus it is crucial for the remainder to be smaller in order to apply a contraction mapping principle. This is actually the case thanks to the fact that the odd part of $U_{\varepsilon,\phi}$ is of order $\varepsilon^{4}$.
\end{remark}

\section{The approximate solution}
\subsection{Construction}
First one can try to take $v_{\star}(t)$ as an approximate solution. We recall that $t=z-\phi(\varepsilon y)$, where $\phi\in B_{4}(1/4)$ is some small function that respects the symmetries of the $\Sigma$. We will see that these symmetries will be inherited by the approximate solution (see Remark \ref{rem_symm_v} below). Since the Fermi coordinates are just defined in a neighbourhood of the Torus, our approximate solution is not defined everywhere. For our purposes, it is enough to consider it in the set
\begin{eqnarray}
B=\{x=Z_{\varepsilon}(y,t+\phi(\varepsilon y))\in\R^{3}:|t|<\tau/2\varepsilon+5\},
\end{eqnarray}
that is a tubular neighbourhood of 
\begin{eqnarray}\notag
\Sigma_{\varepsilon,\phi}=\{y+\phi(\varepsilon y)\nu(\varepsilon y):y\in\Sigma_{\varepsilon}\}
\end{eqnarray}
of width $\tau/4\varepsilon$. Then it will be extended to the whole $\R^{3}$ with the aid of a cutoff function. 

In the sequel, $v_{\star}$ and its derivatives will always be evaluated at $t$, the geometric quantities, $\phi$ and its derivatives will always be evaluated at $\varepsilon y$. By (\ref{exp_lapl}), 
\begin{eqnarray}
-\Delta v_{\star}+W^{'}(v_{\star})=-v_{\star}^{''}+W^{'}(v_{\star})+\varepsilon\hat{H}(\varepsilon y,\varepsilon (t+\phi))v_{\star}^{'}\label{first_time}\\\notag
+\varepsilon^{2}\Delta_{\Sigma}\phi v_{\star}^{'}-\varepsilon^{2}|\nabla\phi|^{2}v_{\star}^{''}
+\varepsilon^{3}(t+\phi)(a_{1}^{ij}\phi_{ij}+b_{1}^{i}\phi_{i})v_{\star}^{'}-\varepsilon^{3}(t+\phi)a_{1}^{ij}\phi_{i}\phi_{j}v_{\star}^{''}\\\notag
+\varepsilon^{4}(t+\phi)^{2}(a_{2}^{ij}\phi_{ij}+b_{2}^{i}\phi_{i})v_{\star}^{'}-\varepsilon^{4}(t+\phi)^{2}a_{2}^{ij}\phi_{i}\phi_{j}v_{\star}^{''}\\\notag
\varepsilon^{2}(\overline{a}^{ij}\phi_{ij}+\overline{b}^{i}\phi_{i})v_{\star}^{'}-\varepsilon^{2}\overline{a}^{ij}\phi_{i}\phi_{j}v_{\star}^{''}.
\end{eqnarray}
The term of order $0$ in $\varepsilon$ vanishes since $v_{\star}$ satisfies the ODE $-v_{\star}^{''}+W^{'}(v_{\star})=0$. Thus, in order to compute $F(v_{\star})$, we need to apply the linear operator $-\Delta+W^{''}(v_{\star})$ to the remaining terms. We will write down all terms of order less or equal than $4$, the other ones being lower order terms, in some sense that will be clear soon. 
Let us set, for any function $v\in C^{2}(\R)$, $L_{\star}v:=-v^{''}+W^{''}(v_{\star})v$. Differentiating the ODE satisfied by $v_{\star}$, we get  $L_{\star}v_{\star}^{'}=0$, thus using the Taylor expansion of $\tilde{H}$, the first term of (\ref{first_time}) gives
\begin{eqnarray}
T^{1}_{\varepsilon,\phi}(y,t)=\big(-\Delta+W^{''}(v_{\star})\big)(\varepsilon\hat{H}(\varepsilon y,\varepsilon(t+\phi))v_{\star}^{'})=\varepsilon^{2}(H^{2}-2|A|^{2})v_{\star}^{''}\\\notag
+\varepsilon^{3}\bigg\{(2H|A|^{2}-4\tr A^{3})(t+\phi)v_{\star}^{''}+(H|A|^{2}-2\tr A^{3})v_{\star}^{'}-\Delta_{\Sigma}Hv_{\star}^{'}\\\notag
+2(\nabla_{\Sigma}H,\nabla_{\Sigma}\phi)v_{\star}^{''}-H|\nabla_{\Sigma}\phi|^{2}v_{\star}^{'''}+H\Delta_{\Sigma}\phi v_{\star}^{''}\bigg\}\\\notag
+\varepsilon^{4}\bigg\{(|A|^{4}-6H+2H\tr A^{3})((t+\phi)^{2}v_{\star}^{''}+(t+\phi)v_{\star}^{'})-\Delta_{\Sigma}|A|^{2}(t+\phi)v_{\star}^{'}\\\notag
+2(\nabla_{\Sigma}|A|^{2},\nabla_{\Sigma}\phi)(t+\phi)v_{\star}^{''}
-|A|^{2}|\nabla_{\Sigma}\phi|^{2}(t+\phi)v_{\star}^{'''}+\Delta_{\Sigma}\phi|A|^{2}(t+\phi)v_{\star}^{''}\\\notag
-(a_{1}^{ij}H_{ij}+b_{1}^{i}H_{i})(t+\phi)v_{\star}^{'}
+2a_{1}^{ij}H_{i}\phi_{j}(t+\phi)v_{\star}^{''}\\\notag
+H(a_{1}^{ij}\phi_{ij}+b_{1}^{i}\phi_{i})(t+\phi)v_{\star}^{''}-Ha_{1}^{ij}\phi_{i}\phi_{j}v_{\star}^{'''}\bigg\}+\varepsilon^{5}F^{1}_{\varepsilon,\phi}(y,t),
\end{eqnarray}
with $F^{1}_{\varepsilon,\phi}$ small and Lipschitzian in $\phi$, in the sense that
\begin{eqnarray}
\begin{cases}
|S_{\varepsilon,\phi}F^{1}_{\varepsilon,\phi}|_{C^{0,\alpha}_{\gamma}(\R^{3})}\leq c\\
|S_{\varepsilon,\phi}F^{1}_{\varepsilon,\phi_{1}}-S_{\varepsilon,\phi_{2}}F^{1}_{\varepsilon,\psi}|_{C^{0,\alpha}(\Sigma)}\leq c|\phi_{1}-\phi_{2}|_{C^{4,\alpha}(\Sigma)},
\end{cases}
\label{lip_phi}
\end{eqnarray}
for any $\phi,\phi_{1},\phi_{2}\in B_{4}(\tau/4)$, for some constant $c=c(W,\tau)>0$ independent of $\varepsilon$ and $\phi$.

Similarly, the second term of (\ref{first_time}) gives
\begin{eqnarray}
T^{2}_{\varepsilon,\phi}(y,t)=\big(-\Delta+W^{''}(v_{\star})\big)(\varepsilon^{2}\Delta_{\Sigma}\phi v_{\star}^{'})
=\varepsilon^{3}H\Delta_{\Sigma}\phi v_{\star}^{''}\\\notag
+\varepsilon^{4}\bigg\{-(\Delta_{\Sigma})^{2}\phi v_{\star}^{'}
+|A|^{2}\Delta_{\Sigma}\phi(t+\phi)v_{\star}^{''}
+2(\nabla_{\Sigma}\Delta_{\Sigma}\phi,\nabla_{\Sigma}\phi)v_{\star}^{''}\\\notag
+(\Delta_{\Sigma}\phi)^{2}v_{\star}^{''}
-|\nabla_{\Sigma}\phi|^{2}\Delta_{\Sigma}\phi v_{\star}^{'''}\bigg\}+\varepsilon^{5}F^{2}_{\varepsilon,\phi}(y,t),
\end{eqnarray}
with $F^{2}_{\varepsilon,\phi}$ fulfilling (\ref{lip_phi}).

The third term of (\ref{first_time}) is already quadratic in $\phi$, but, for the sake of completeness, we prefer to write it down.
\begin{eqnarray}
T^{3}_{\varepsilon,\phi}(y,t)=\big(-\Delta+W^{''}(v_{\star})\big)(-\varepsilon^{2}|\nabla_{\Sigma}\phi|^{2}v_{\star}^{''})=
\varepsilon^{2}|\nabla_{\Sigma}\phi|^{2}(v_{\star}^{(4)}-W^{''}(v_{\star})v^{''}_{\star})\\\notag
-\varepsilon^{3}H|\nabla_{\Sigma}\phi|^{2}v_{\star}^{'''}+\varepsilon^{4}\bigg\{-|A|^{2}|\nabla_{\Sigma}\phi|^{2}(t+\phi)v_{\star}^{'''}
+\Delta_{\Sigma}|\nabla_{\Sigma}\phi|^{2}v_{\star}^{''}\\\notag
-2(\nabla_{\Sigma}|\nabla_{\Sigma}\phi|^{2},\nabla_{\Sigma}\phi)v_{\star}^{'''}+|\nabla_{\Sigma}\phi|^{4}v_{\star}^{(4)}
-|\nabla_{\Sigma}\phi|^{2}\Delta_{\Sigma}\phi v_{\star}^{'''}\bigg\}
+\varepsilon^{5}F^{3}_{\varepsilon,\phi}(y,t)
\end{eqnarray}
The fouth term of (\ref{first_time}) gives
\begin{eqnarray}
T^{4}_{\varepsilon,\phi}(y,t)=\big(-\Delta+W^{''}(v_{\star})\big)\big(\varepsilon^{3}(a_{1}^{ij}\phi_{ij}+b_{1}^{i}\phi_{i})(t+\phi)v_{\star}^{'}\big)=\\\notag
-2\varepsilon^{3}(a_{1}^{ij}\phi_{ij}+b_{1}^{i}\phi_{i})v_{\star}^{''}
+\varepsilon^{4}H(a_{1}^{ij}\phi_{ij}+b_{1}^{i}\phi_{i})(v_{\star}^{'}+(t+\phi)v_{\star}^{''})+\varepsilon^{5}F^{4}_{\varepsilon,\phi}(y,t).
\end{eqnarray}
The fifth term of (\ref{first_time}) gives
\begin{eqnarray}\notag
T^{5}_{\varepsilon,\phi}(y,t)=\big(-\Delta+W^{''}(v_{\star})\big)(-\varepsilon^{3}a_{1}^{ij}\phi_{i}\phi_{j}(t+\phi)v_{\star}^{''})=\\\notag
\varepsilon^{3}a_{1}^{ij}\phi_{i}\phi_{j}((t+\phi)v_{\star}^{(4)}-(t+\phi)W^{''}(v_{\star})v^{''}_{\star}+2v_{\star}^{'''})\\\notag
-\varepsilon^{4}Ha_{1}^{ij}\phi_{i}\phi_{j}(v_{\star}^{''}+(t+\phi)v_{\star}^{'''})+\varepsilon^{5}F^{5}_{\varepsilon,\phi}(y,t),
\end{eqnarray}
with $F^{3}_{\varepsilon,\phi},F^{4}_{\varepsilon,\phi},F^{5}_{\varepsilon,\phi}$ fulfilling (\ref{lip_phi}).

Now we consider the terms involving $a_{2}^{ij}$ and $b_{2}^{i}$. We will see that all the contributions of order $\varepsilon^{4}$ coming from these terms will simplify, therefore we do not need to know the explicit expression of $a_{2}^{ij}$ and $b_{2}^{i}$.
\begin{eqnarray}
T^{6}_{\varepsilon,\phi}(y,t)=\bigg\{\big(-\Delta+W^{''}(v_{\star})\big)\big(\varepsilon^{4}(a_{2}^{ij}\phi_{ij}+b_{2}^{i}\phi_{i})(t+\phi)^{2}v_{\star}^{'}
-\varepsilon^{4}a_{2}^{ij}\phi_{i}\phi_{j}(t+\phi)^{2}v_{\star}^{''}\big)\\\notag
+\varepsilon^{2}(\overline{a}^{ij}\phi_{ij}+\overline{b}^{i}\phi_{i})v_{\star}^{'}
-\varepsilon^{2}\overline{a}^{ij}\phi_{i}\phi_{j}v_{\star}^{''}\bigg\}\\\notag
=-\varepsilon^{4}(a_{2}^{ij}\phi_{ij}+b_{2}^{i}\phi_{i})(2v_{\star}^{'}+4(t+\phi)v_{\star}^{''})\\\notag
-\varepsilon^{4}a_{2}^{ij}\phi_{i}\phi_{j}(2v_{\star}^{''}+4(t+\phi)v_{\star}^{'''}+(t+\phi)^{2}v^{(4)}_{\star}+W^{''}(v_{\star})v_{\star}^{''})\\\notag
+\big(-\Delta+W^{''}(v_{\star})\big)\bigg\{\varepsilon^{2}(\overline{a}^{ij}\phi_{ij}+\overline{b}^{i}\phi_{i})v_{\star}^{'}
-\varepsilon^{2}\overline{a}^{ij}\phi_{i}\phi_{j}v_{\star}^{''}\bigg\}+\varepsilon^{5}F^{6}_{\varepsilon,\phi}(y,t),
\end{eqnarray}
with $F^{6}_{\varepsilon,\phi}$ fulfilling (\ref{lip_phi}).



It turns out that, in the expansion of $F(v_{\star}(t))$, the only term of order $\varepsilon^{2}$ is $\varepsilon^{2}(H^{2}-2|A|^{2})v_{\star}^{''}$. Since it is too large for our purposes, we add a correction to the approximate solution in order to cancel it.

We set
\begin{eqnarray}\notag
\eta(t)=-v_{\star}^{'}(t)\int_{0}^{t}(v_{\star}^{'}(s))^{-2}ds\int_{0}^{s}\frac{\tau (v_{\star}^{'}(\tau))^{2}}{2}d\tau.
\end{eqnarray}
This function is exponentially decaying, odd and solves 
\begin{eqnarray}\notag
L_{\star}\eta(t)=-\eta^{''}(t)+W^{''}(v_{\star}(t))\eta(t)=\frac{1}{2}tv_{\star}^{'}(t)\\\notag
\int_{-\infty}^{\infty}\eta(t)v_{\star}^{'}(t)dt=0.
\end{eqnarray}
Differentiating this relation once more, it is possible to see that $L_{\star}^{2}\eta(t)=-v_{\star}^{''}(t)$. Our new approximate solution will be
\begin{eqnarray}
\tilde{v}_{\varepsilon}(y,t)=v_{\star}(t)+\varepsilon^{2}(\psi(\varepsilon y)+\varepsilon L\phi(\varepsilon y))\eta(t),
\end{eqnarray}
with $\psi:\Sigma\to\R$ and $L$ linear in $\phi$ to be determined later. In the sequel, $\eta$ and its derivatives are evaluated at $t$, the geometric quantities, $\phi$ and its derivatives will be evaluated at $\varepsilon y$. Taking the Taylor expansion of $F_{\varepsilon}$, 
\begin{eqnarray}\notag
F(\tilde{v}_{\varepsilon,\phi}(y,t))=F(v_{\star})+F^{'}(v_{\star})\big(\varepsilon^{2}(\psi+\varepsilon L\phi)\eta\big)\\\notag
+F^{''}(v_{\star})\big[\varepsilon^{2}(\psi+\varepsilon L\phi)\eta,\varepsilon^{2}(\psi+\varepsilon L\phi)\eta\big]+C_{\varepsilon,\phi}[\varepsilon^{2}(\psi(\varepsilon y)+\varepsilon L\phi(\varepsilon y))\eta],
\end{eqnarray}
where
\begin{eqnarray}\notag
C_{\varepsilon,\phi}[w]=\int_{0}^{1}dt\int_{0}^{t}ds\int_{0}^{s}F^{'''}\big(v_{\star}+\tau w\big)[w,w,w]d\tau.
\end{eqnarray}
Now we have to compute $F^{'}(v_{\star})\big(\varepsilon^{2}(\psi(\varepsilon y)+\varepsilon L\phi(\varepsilon y))\eta\big)$. As first we note that
\begin{eqnarray}\notag
T^{7}_{\varepsilon,\phi}(y,z)=W^{'''}(v_{\star})(-\Delta v_{\star}+W^{'}(v_{\star}))\varepsilon^{2}(\psi(\varepsilon y)+\varepsilon L\phi(\varepsilon y))\eta
=\varepsilon^{3}H\psi W^{'''}(v_{\star})\eta v_{\star}^{'}\\\notag
+\varepsilon^{4}(\psi\Delta_{\Sigma}\phi+HL\phi+(t+\phi)\psi|A|^{2})W^{'''}(v_{\star})\eta v_{\star}^{'}+\varepsilon^{5}F^{7}_{\varepsilon,\phi}(y,t),
\end{eqnarray}
with $F^{7}_{\varepsilon,\phi}$ fulfilling (\ref{lip_phi}).

After that, we have to compute $\big(-\Delta+W^{''}(v_{\star})\big)^{2}(\varepsilon^{2}(\psi+\varepsilon L\phi)\eta)$. We obtain
\begin{eqnarray}\notag
\big(-\Delta+W^{''}(v_{\star})\big)(\varepsilon^{2}(\psi+\varepsilon L\phi)\eta)=
\varepsilon^{2}\psi L_{\star}\eta+\varepsilon^{3}(H\psi\eta^{'}+L\phi L_{\star}\eta)\\\notag
+\varepsilon^{4}\bigg\{-\Delta_{\Sigma}\psi\eta+\big(|A|^{2}\psi(t+\phi)+HL\phi+2(\nabla_{\Sigma}\psi,\nabla_{\Sigma}\phi)\\\notag
+\psi\Delta_{\Sigma}\phi\big)\eta^{'}-\psi|\nabla_{\Sigma}\phi|^{2}\eta^{''}\bigg\}
+\varepsilon^{5}\tilde{F}_{\varepsilon,\phi}(y,t),
\label{first_time_corr}
\end{eqnarray}
with $\tilde{F}_{\varepsilon,\phi}$ satisfying (\ref{lip_phi}).

Applying the operator once more, we obtain
\begin{eqnarray}
T^{8}_{\varepsilon,\phi}(y,t)=\big(-\Delta+W^{''}(v_{\star})\big)(\varepsilon^{2}(\psi+\varepsilon L\phi)L_{\star}\eta)=\varepsilon^{2}\psi L_{\star}^{2}\eta
+\varepsilon^{3}\bigg\{L\phi L_{\star}^{2}\eta+H\psi(L_{\star}\eta)^{'}\bigg\}\\\notag
+\varepsilon^{4}\bigg\{-\Delta_{\Sigma}\psi L_{\star}\eta+\big(|A|^{2}\psi (t+\phi)+HL\phi+2(\nabla_{\Sigma}\psi,\nabla_{\Sigma}\phi)+\psi\Delta_{\Sigma}\phi\big)(L_{\star}\eta)^{'}\\\notag
-\psi|\nabla_{\Sigma}\phi|^{2}(L_{\star}\eta)^{''}\bigg\}+\varepsilon^{5}F^{8}_{\varepsilon,\phi}(y,t),
\end{eqnarray}
with $F^{8}_{\varepsilon,\phi}$ satisfying (\ref{lip_phi}).

Moreover,
\begin{eqnarray}
T^{9}_{\varepsilon,\phi}(y,t)=\big(-\Delta+W^{''}(v_{\star})\big)(\varepsilon^{3}H\eta^{'})=\varepsilon^{3}\psi HL_{\star}(\eta^{'})
+\varepsilon^{4}H^{2}\psi\eta^{''}+\varepsilon^{5}F^{9}_{\varepsilon,\phi}(y,t),
\end{eqnarray}
with $F^{9}_{\varepsilon,\phi}$ satisfying (\ref{lip_phi}).

As regards the term of order $\varepsilon^{4}$ of (\ref{first_time_corr}), we note that
\begin{eqnarray}
T^{10}_{\varepsilon,\phi}(y,t)=\varepsilon^{4}\big(-\Delta+W^{''}(v_{\star})\big)\bigg\{-\Delta_{\Sigma}\psi\eta+\big(|A|^{2}\psi (t+\phi)+HL\phi+2(\nabla_{\Sigma}\psi,\nabla_{\Sigma}\phi)\\\notag
+\psi\Delta_{\Sigma}\phi\big)\eta^{'}-\psi|\nabla_{\Sigma}\phi|^{2}\eta^{''}\bigg\}
=\varepsilon^{4}\bigg\{-\Delta_{\Sigma}\psi L_{\star}\eta
+\big(HL\phi+2(\nabla_{\Sigma}\psi,\nabla_{\Sigma}\phi)+\psi\Delta_{\Sigma}\phi\big)L_{\star}(\eta^{'})\\\notag
+|A|^{2}\psi L_{\star}((t+\phi)\eta^{'})-\psi|\nabla_{\Sigma}\phi|^{2}L_{\star}(\eta^{''})\bigg\}+\varepsilon^{5}F^{10}_{\varepsilon,\phi}(y,t),
\end{eqnarray}
with $F^{10}_{\varepsilon,\phi}$ satisfying (\ref{lip_phi}). To conclude, also
\begin{eqnarray}\notag
F^{11}_{\varepsilon,\phi}(y,t)=\big(-\Delta+W^{''}(v_{\star})\big)\tilde{F}_{\varepsilon,\phi}(y,t)
\end{eqnarray}
is negligible, that is it satisfies (\ref{lip_phi}), since $\tilde{F}_{\varepsilon,\phi}$ does.

The only term of order $\varepsilon^{2}$ in $F^{'}_{\varepsilon}(v_{\star})\big(\varepsilon^{2}(\psi(\varepsilon y)+\varepsilon L\phi(\varepsilon y))\eta\big)$ is $\varepsilon^{2}\psi L_{\star}\eta=-\varepsilon^{2}\psi v^{''}_{\star}$. Since we want it to erase the term of order $\varepsilon^{2}$ of $F_{\varepsilon}(v_{\star})$, we could set $\psi:=H^{2}-2|A|^{2}$. However, some quadratic terms appear at order $\varepsilon^{3}$. The only one that gives rise to some problems is $-2H|\nabla_{\Sigma}\phi|^{2}v_{\star}^{'''}$, thus we set $\psi:=H^{2}-2|A|^{2}+d|\nabla_{\Sigma}\phi|^{2}$, for some constant $d$ to be determined after projection. In particular, $\nabla_{\Sigma}\psi=2H\nabla_{\Sigma} H-2\nabla_{\Sigma}|A|^{2}+d\nabla_{\Sigma}|\nabla_{\Sigma}\phi|^{2}$. $L$ will be determined after projection.

Now we have to considered the contribution of $F^{''}_{\varepsilon}(v_{\star})\big(\varepsilon^{2}(\psi+\varepsilon L\phi)\eta\big)$, since it gives rise to a term of order $\varepsilon^{4}$. However, we will see that this contribution will cancel after projection
\begin{eqnarray}
F^{''}(v_{\star})\big(\varepsilon^{2}(\psi+\varepsilon L\phi)\eta\big)=\varepsilon^{4}W^{'''}(v_{\star})W^{''}(v_{\star})\psi^{2}\eta^{2}+\varepsilon^{5}F^{12}_{\varepsilon,\phi}(y,t),
\end{eqnarray}
with $F^{12}_{\varepsilon,\phi}$ satisfying (\ref{lip_phi}).\\

We recall that $\tilde{v}_{\varepsilon,\phi}$ is just defined in $B$, while our global approximate solution is $v_{\varepsilon,\phi}(x)=\chi_{5}(x)\tilde{v}_{\varepsilon,\phi}(y,t)+(1-\chi_{5}(x))\mathbb{H}(x)$ (see (\ref{defv})).
\begin{remark}
It follows from the construction that our approximate solution respects the symmetries of the Torus, that is $v_{\varepsilon,\phi}(x)=v_{\varepsilon,\phi}(Tx)$ and $v_{\varepsilon,\phi}(x)=v_{\varepsilon,\phi}(Rx)$, for any $R\in SO_{x_{3}}(3)$.
\label{rem_symm_v}
\end{remark}

\subsection{Projection}
As we noticed in section $4,2$, we need to consider the projection of the error $F(\tilde{v}_{\varepsilon,\phi})$. In this subsection, we will explain how to do and we will see that this projection also enables us to choose $L$ and $d$.
\begin{proposition}
Let us set, for any $\phi\in B_{4}(\tau/4)$,
\begin{eqnarray}
L\phi:=-4<A,\nabla^{2}\phi>+2H\Delta_{\Sigma}\phi+\phi(2H|A|^{2}-4\tr A^{3}),&\text{ }d=-4b_{\star}/c_{\star},
\label{choiceL}
\end{eqnarray}
where $c_{\star}:=\int_{-\infty}^{\infty}(v_{\star}^{'}(t))^{2}dt$ and $b_{\star}:=\int_{-\infty}^{\infty}(v_{\star}^{''}(t))^{2}dt$. 
Then, for any $y\in\Sigma_{\varepsilon}$, the projection of $F_{\varepsilon}(\tilde{v}_{\varepsilon,\phi})$ satisfies
\begin{eqnarray}
\int_{-\infty}^{\infty}F(\tilde{v}_{\varepsilon,\phi})(y,t)v_{\star}^{'}(t)dt=
-\varepsilon^{4}c_{\star}\tilde{L_{0}}\phi(\varepsilon y)+\varepsilon^{5}\mathcal{F}_{\varepsilon,\phi}(\varepsilon y),
\end{eqnarray}
with $\mathcal{F}_{\varepsilon,\phi}$ uniformly bounded and Lipschitzian in $\phi\in B_{4}(\tau/4)$ and in $\varepsilon$, that is there exists a constant $c=c(W,\tau)>0$ such that 
\begin{eqnarray}
\begin{cases}
|\mathcal{F}_{\varepsilon,\phi}|_{C^{0,\alpha}(\Sigma)}\leq c,\\
|\mathcal{F}_{\varepsilon,\phi_{1}}-\mathcal{F}_{\varepsilon,\phi_{2}}|_{C^{0,\alpha}(\Sigma)}\leq c|\phi_{1}-\phi_{2}|_{C^{4,\alpha}(\Sigma)},
\end{cases}
\label{lip_ep_s}
\end{eqnarray}
for any $\phi,\phi_{1},\phi_{2}\in B_{4}(\tau/4)$ and for any $\varepsilon>0$ small enough.
\label{solve_eq_bifo}
\end{proposition}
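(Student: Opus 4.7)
My plan is to take the term-by-term expansion $F(\tilde v_{\varepsilon,\phi})=\sum_{i=1}^{12}T^i_{\varepsilon,\phi}+\varepsilon^5\sum_i F^i_{\varepsilon,\phi}$ assembled in the previous subsection, multiply by $v_\star'(t)$, and integrate over $t\in\mathbb R$. The workhorse identities are: the parity ($v_\star,\eta$ odd and $W^{(2k+1)}\!\circ v_\star$ odd), the one-dimensional integrals $\int(v_\star')^2\,dt=c_\star$, $\int v_\star'v_\star''\,dt=0$, $\int v_\star'v_\star'''\,dt=-b_\star$, the self-adjointness of $L_\star$ with $L_\star v_\star'=0$ (so that $\int (L_\star f)\,v_\star'\,dt=0$ for any nice $f$), and the relation $v_\star^{(4)}=W''(v_\star)v_\star''+W'''(v_\star)(v_\star')^2$ obtained by differentiating the ODE twice. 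I would then group the contributions by the order of $\varepsilon$.

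\textbf{Orders $\varepsilon^2$ and $\varepsilon^3$.} At order $\varepsilon^2$ the surviving pieces come from $T^1,T^3,T^8$: namely $(H^2-2|A|^2)v_\star''$, $|\nabla_\Sigma\phi|^2 W'''(v_\star)(v_\star')^2$ (after using $v_\star^{(4)}-W''(v_\star)v_\star''=W'''(v_\star)(v_\star')^2$) and $\psi L_\star^2\eta=-\psi v_\star''$. With $\psi=H^2-2|A|^2+d|\nabla_\Sigma\phi|^2$ the first and third collapse to $-d|\nabla_\Sigma\phi|^2v_\star''$, and all three remaining integrands are odd in $t$ against the even function $v_\star'$; so the $\varepsilon^2$ projection vanishes irrespective of $d$. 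At order $\varepsilon^3$ most terms are again odd in $t$; the obstructions are the combined $-2H|\nabla_\Sigma\phi|^2v_\star'''$ (from $T^1$ and $T^3$), whose projection equals $2Hb_\star|\nabla_\Sigma\phi|^2$, together with the Jacobi-type combinations of $\phi$-derivatives generated by $a_1^{ij}\phi_{ij}+b_1^i\phi_i$ in $T^1,T^4$. Using (\ref{geom_rel}) the latter equals $2<A,\nabla^2\phi>+(\nabla_\Sigma H,\nabla_\Sigma\phi)$ up to mixed terms $H(\nabla_\Sigma H,\nabla_\Sigma\phi)$ and $(A\nabla_\Sigma\phi,\nabla_\Sigma H)$. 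The choices (\ref{choiceL}) are precisely the ones that kill these obstructions via the $\varepsilon^3$ contributions $\varepsilon^3 L\phi\,L_\star^2\eta=-\varepsilon^3L\phi\,v_\star''$ in $T^8$, $\varepsilon^3 H\psi(L_\star\eta)'$ in $T^8$, and the leading piece of $T^9$; matching coefficients and using $\int v_\star' v_\star'''\,dt=-b_\star$ fixes $d=-4b_\star/c_\star$.

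\textbf{Order $\varepsilon^4$ and remainder.} I would now collect all surviving $\varepsilon^4$ pieces from $T^1,\dots,T^{12}$. The biharmonic term $-(\Delta_\Sigma)^2\phi\,v_\star'$ in $T^2$ integrates to $-c_\star(\Delta_\Sigma)^2\phi$; together with the $|A|^2\Delta_\Sigma\phi\,v_\star''$, $(\Delta_\Sigma|A|^2)\phi\,v_\star'$, $|A|^4\phi\,v_\star''$ and $\Delta_\Sigma\phi\cdot|A|^2\,v_\star''$ contributions scattered through $T^1,T^2,T^6,T^{10}$, this regroups—after applying (\ref{geom_rel}) and Codazzi's equation $\nabla_jA^{ij}=\nabla^iH$—into $-c_\star(L_0^2\phi+\tfrac{3}{2}H^2L_0\phi)$. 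The mixed curvature terms $-H(\nabla_\Sigma\phi,\nabla_\Sigma H)+2(A\nabla_\Sigma\phi,\nabla_\Sigma H)+2H<A,\nabla^2\phi>$ emerge from the $T^1$ expansion involving derivatives of $H$ and from the $T^7$ contribution $H\psi W'''(v_\star)\eta v_\star'$ (handled by integration by parts together with $L_\star(\eta v_\star')$), while the zero-order piece $\phi(2<A,\nabla^2H>+|\nabla_\Sigma H|^2+2H\tr A^3)$ assembles from the $\phi$-dependent parts of $\psi$ and $L\phi$ passing through $T^4,T^6,T^8,T^9,T^{10}$ and from the $F''(v_\star)$ contribution $W'''(v_\star)W''(v_\star)\psi^2\eta^2$. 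The total is exactly $-c_\star\tilde L_0\phi(\varepsilon y)$. All contributions of order $\varepsilon^5$ or higher—the $F^i_{\varepsilon,\phi}$ pieces, the $T^6$ remainders involving $\overline a,\overline b$, and the cubic term $C_{\varepsilon,\phi}$ from the Taylor expansion of $F$—are absorbed into $\mathcal F_{\varepsilon,\phi}$, and (\ref{lip_ep_s}) is inherited from (\ref{lip_phi}) because pairing against $v_\star'$ (which decays exponentially at rate $\sqrt{W''(1)}>\gamma$) is a bounded map from $C^{0,\alpha}_\gamma(\Sigma_\varepsilon\times\mathbb R)$ into $C^{0,\alpha}(\Sigma)$. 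The main obstacle is precisely the reassembly at order $\varepsilon^4$: one must verify that roughly a dozen disparate geometric contributions collapse to the explicit form of $\tilde L_0\phi$, and in particular the emergence of the coefficients $\tfrac{3}{2}$ in $\tfrac{3}{2}H^2L_0\phi$ and $2H\tr A^3$ in the zero-order term—each coming from combining several sources—is the most delicate check.
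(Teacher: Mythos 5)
Your overall strategy coincides with the paper's (project the expansion term by term against $v_{\star}'$, use parity, the one\-dimensional integrals and the choices of $L$ and $d$), but your bookkeeping at order $\varepsilon^{3}$ has a genuine gap. After projection, the $\phi$\-independent curvature terms do \emph{not} disappear by parity: from $T^{1}$ one gets $-c_{\star}\Delta_{\Sigma}H$ (while the terms $(2H|A|^{2}-4\tr A^{3})(t+\phi)v_{\star}''$ and $(H|A|^{2}-2\tr A^{3})v_{\star}'$ cancel each other through $\int tv_{\star}''v_{\star}'\,dt=-\tfrac12 c_{\star}$), and from $T^{7}$ and $T^{8}$ one gets $\tfrac14 c_{\star}H(H^{2}-2|A|^{2})$ each, so that the total $\varepsilon^{3}$ projection contains $c_{\star}\big(-\Delta_{\Sigma}H+\tfrac12 H(H^{2}-2|A|^{2})\big)$. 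This is not removed by the choices (\ref{choiceL}); it vanishes only because the Clifford Torus satisfies the Willmore equation $-\Delta_{\Sigma}H+\tfrac12 H(H^{2}-2|A|^{2})=0$. You never invoke this fact, and without it the claimed expansion starting at order $\varepsilon^{4}$ is false; this is exactly where the Willmore hypothesis on $\Sigma$ enters the proof.

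In addition, the mechanism you propose for fixing $d$ is partly incorrect: $\varepsilon^{3}L\phi\,L_{\star}^{2}\eta=-\varepsilon^{3}L\phi\,v_{\star}''$ projects to zero since $\int v_{\star}''v_{\star}'\,dt=0$, and the leading piece of $T^{9}$, namely $\psi HL_{\star}(\eta')$, projects to zero by (\ref{int3}); neither can cancel anything at order $\varepsilon^{3}$, and $L$ only plays its role at order $\varepsilon^{4}$, through the $\tfrac14 c_{\star}HL\phi$ contributions of $T^{7}$ and $T^{8}$. The quadratic obstruction $2b_{\star}H|\nabla_{\Sigma}\phi|^{2}$ coming from $T^{1}$ and $T^{3}$ is cancelled by the $d$\-part of $\psi$ through $H\psi W'''(v_{\star})\eta v_{\star}'$ in $T^{7}$ and $H\psi(L_{\star}\eta)'$ in $T^{8}$, each contributing $\tfrac14 c_{\star}dH|\nabla_{\Sigma}\phi|^{2}$; this is what yields $d=-4b_{\star}/c_{\star}$. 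Likewise, the combinations $a_{1}^{ij}\phi_{ij}+b_{1}^{i}\phi_{i}$ appearing in $T^{4}$ at order $\varepsilon^{3}$ are multiplied by $v_{\star}''$ and project to zero, so they are not obstructions there. Finally, your order\-$\varepsilon^{4}$ reassembly is only sketched, and the $F''(v_{\star})$ contribution $W'''(v_{\star})W''(v_{\star})\psi^{2}\eta^{2}$ in fact projects to zero by parity rather than feeding into $\tilde{L}_{0}$; but the decisive missing step is the use of the Willmore equation at order $\varepsilon^{3}$.
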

\begin{proof}
Above we computed $F_{\varepsilon}(\tilde{v}_{\varepsilon,\phi})$ using (\ref{lapl_yt}), now we just project it term by term.

Integrating by parts we can show that
\begin{eqnarray}
\int_{-\infty}^{\infty}tv_{\star}^{''}(t)v_{\star}^{'}(t)dt=-\frac{1}{2}c_{\star}\label{int1}\\
\int_{-\infty}^{\infty}L_{\star}\eta(t)v_{\star}^{'}(t)dt=\frac{1}{4}c_{\star}\label{int2}\\
\int_{-\infty}^{\infty}L_{\star}(\eta^{'}(t))v_{\star}^{'}(t)dt=0,\label{int3}
\end{eqnarray}
so in particular
\begin{eqnarray}\notag
\int_{-\infty}^{\infty}W^{'''}(v_{\star}(t))\eta(t)(v_{\star}^{'}(t))^{2}dt
=\int_{-\infty}^{\infty}\big\{L_{\star}\eta(t)-L_{\star}(\eta^{'}(t))\big\}v_{\star}^{'}(t)dt=\frac{1}{4}c_{\star}.\label{int4}
\end{eqnarray}
Moreover, setting $b_{\star}:=\int_{-\infty}^{\infty}(v_{\star}^{''}(t))^{2}dt=-\int_{-\infty}^{\infty}v_{\star}^{'''}(t)v_{\star}^{'}(t)dt$, we can see that
\begin{eqnarray}
\int_{-\infty}^{\infty}\big\{tv_{\star}^{(4)}(t)-tW^{''}(v_{\star}(t))v_{\star}^{''}(t)
+2v_{\star}^{'''}(t)\big\}v_{\star}^{'}(t)dt=\label{int5}\\\notag
-\int_{-\infty}^{\infty}tL_{\star}(v_{\star}^{''}(t))v_{\star}^{'}(t)dt-2b_{\star}=-\int_{-\infty}^{\infty}tv_{\star}^{''}(t)
L_{\star}(v_{\star}^{'}(t))dt+2b_{\star}-2b_{\star}=0
\end{eqnarray}
because $L_{\star}(v_{\star}^{'})=0$.

In the forthcoming calculations, right-hand side will always be evaluated at $\varepsilon y$. By (\ref{int1}) and (\ref{geom_rel}),
\begin{eqnarray}\notag
\int_{-\infty}^{\infty}\big\{T^{1}_{\varepsilon,\phi}(y,t)-\varepsilon^{2}(H^{2}-2|A|^{2})v_{\star}^{''}(t)\big\}v_{\star}^{'}(t)dt
=\varepsilon^{3}\bigg\{-c_{\star}\Delta_{\Sigma}H+b_{\star}H|\nabla_{\Sigma}\phi|^{2}\bigg\}\\\notag
+\varepsilon^{4}c_{\star}\bigg\{-\phi\Delta_{\Sigma}|A|^{2}-(\nabla_{\Sigma}|A|^{2},\nabla_{\Sigma}\phi)-\frac{1}{2}|A|^{2}\Delta_{\Sigma}\phi
-\phi(2<A,\nabla^{2}H>+|\nabla_{\Sigma}H|^{2})\\\notag
-2(A\nabla_{\Sigma}H,\nabla_{\Sigma}\phi)-\frac{1}{2}H(2<A,\nabla^{2}\phi>+(\nabla_{\Sigma}H,\nabla_{\Sigma}\phi))\bigg\}
+\varepsilon^{5}\mathcal{F}^{1}_{\varepsilon,\phi},
\end{eqnarray}
\begin{eqnarray}\notag
\int_{-\infty}^{\infty}T^{2}_{\varepsilon,\phi}(y,t)v_{\star}^{'}(t)dt=\varepsilon^{4}c_{\star}\bigg\{-(\Delta_{\Sigma})^{2}\phi
-\frac{1}{2}|A|^{2}\Delta_{\Sigma}\phi\bigg\}
+\varepsilon^{5}\mathcal{F}^{2}_{\varepsilon,\phi},
\end{eqnarray}
\begin{eqnarray}\notag
\int_{-\infty}^{\infty}T^{3}_{\varepsilon,\phi}(y,t)v_{\star}^{'}(t)dt=\varepsilon^{3}b_{\star}H|\nabla_{\Sigma}\phi|^{2}
+\varepsilon^{5}\mathcal{F}^{3}_{\varepsilon,\phi},
\end{eqnarray}
\begin{eqnarray}\notag
\int_{-\infty}^{\infty}T^{4}_{\varepsilon,\phi}(y,t)v_{\star}^{'}(t)dt=\varepsilon^{4}\frac{1}{2}c_{\star}H(2<A,\nabla^{2}\phi>
+(\nabla_{\Sigma}H,\nabla_{\Sigma}\phi))+\varepsilon^{5}\mathcal{F}^{4}_{\varepsilon,\phi},
\end{eqnarray}
with $\mathcal{F}^{1}_{\varepsilon,\phi},\mathcal{F}^{2}_{\varepsilon,\phi},\mathcal{F}^{3}_{\varepsilon,\phi},\mathcal{F}^{4}_{\varepsilon,\phi}$ satisfying (\ref{lip_ep_s}).

By (\ref{int5}),
\begin{eqnarray}\notag
\int_{-\infty}^{\infty}T^{5}_{\varepsilon,\phi}(y,t)v_{\star}^{'}(t)dt=\\\notag
\varepsilon^{3}2(A\nabla_{\Sigma}\phi,\nabla_{\Sigma}\phi)\int_{-\infty}^{\infty}\big\{tv_{\star}^{(4)}(t)-tW^{''}(v_{\star}(t))v_{\star}^{''}(t)
+2v_{\star}^{'''}(t)\big\}v_{\star}^{'}(t)dt+\varepsilon^{5}\mathcal{F}^{5}_{\varepsilon,\phi}=\varepsilon^{5}\mathcal{F}^{5}_{\varepsilon,\phi},
\end{eqnarray}
with $\mathcal{F}^{5}_{\varepsilon,\phi}$ satisfying (\ref{lip_ep_s}). Once again by (\ref{int1}), we can see that
\begin{eqnarray}\notag
\int_{-\infty}^{\infty}T^{6}_{\varepsilon,\phi}(y,t)v_{\star}^{'}(t)dt=\varepsilon^{5}\mathcal{F}^{6}_{\varepsilon,\phi},
\end{eqnarray}
with $\mathcal{F}^{6}_{\varepsilon,\phi}$ satisfying (\ref{lip_ep_s}).

Now let us consider the terms coming from the correction.
\begin{eqnarray}\notag
\int_{-\infty}^{\infty}T^{7}_{\varepsilon,\phi}(y,t)v_{\star}^{'}(t)dt=\varepsilon^{3}c_{\star}\bigg\{\frac{1}{4}H(H^{2}-2|A|^{2})
+dH|\nabla_{\Sigma}\phi|^{2}\bigg\}\\\notag
+\varepsilon^{4}c_{\star}\frac{1}{4}\bigg\{(H^{2}-2|A|^{2})\Delta_{\Sigma}\phi+HL\phi+(H^{2}-2|A|^{2})|A|^{2}\phi\bigg\}
+\varepsilon^{5}\mathcal{F}^{7}_{\varepsilon,\phi},
\end{eqnarray}
\begin{eqnarray}\notag
\int_{-\infty}^{\infty}\big\{T^{8}_{\varepsilon,\phi}(y,t)-\varepsilon^{2}(H^{2}-2|A|^{2})L_{\star}\eta(t)\big\}v_{\star}^{'}(t)dt=
\varepsilon^{3}c_{\star}\frac{1}{4}\bigg\{H(H^{2}-2|A|^{2})\\\notag
+dH|\nabla_{\Sigma}\phi|^{2}\bigg\}+\varepsilon^{4}c_{\star}\bigg\{\frac{1}{4}(H^{2}-2|A|^{2})|A|^{2}\phi+\frac{1}{4}HL\phi
+H(\nabla_{\Sigma}H,\nabla_{\Sigma}\phi)\\\notag
-(\nabla_{\Sigma}|A|^{2},\nabla_{\Sigma}\phi)+\frac{1}{4}(H^{2}-2|A|^{2})\Delta_{\Sigma}\phi\bigg\}+\varepsilon^{5}\mathcal{F}^{8}_{\varepsilon,\phi},
\end{eqnarray}
with $\mathcal{F}^{7}_{\varepsilon,\phi},\mathcal{F}^{8}_{\varepsilon,\phi}$ satisfying (\ref{lip_ep_s}). To conclude, also
\begin{eqnarray}
\mathcal{F}^{9}_{\varepsilon,\phi}=\int_{-\infty}^{\infty}\big\{T^{9}_{\varepsilon,\phi}(y,t)+T^{10}_{\varepsilon,\phi}(y,t)\big\}
v_{\star}^{'}(t)dt
\end{eqnarray}
fulfills (\ref{lip_ep_s}). In conclusion, we choose $L$ and $d$ as in (\ref{choiceL}) in order to cancel the quadratic term appearing at order $\varepsilon^{3}$ and to obtain exactly $\tilde{L}_{0}$ as a linear term at order $\varepsilon^{4}$. Since $\Sigma$ is a Willmore surface, that is it satisfies the Euler equation $-\Delta_{\Sigma}H+\frac{1}{2}H(H^{2}-2|A|^{2})=0$, we have
\begin{eqnarray}\notag
\int_{-\infty}^{\infty}F(\tilde{v}_{\varepsilon,\phi})v_{\star}^{'}(t)dt=\varepsilon^{3}\bigg\{c_{\star}\big(-\Delta_{\Sigma}H+\frac{1}{2}H
(H^{2}-2|A|^{2})\big)(\varepsilon y)\bigg\}\\\notag
-\varepsilon^{4}c_{\star}\tilde{L_{0}}\phi(\varepsilon y)+\varepsilon^{5}\mathcal{H}_{\varepsilon,\phi}(\varepsilon y)=\\\notag
-\varepsilon^{4}c_{\star}\tilde{L_{0}}\phi(\varepsilon y)+\varepsilon^{5}\mathcal{H}_{\varepsilon,\phi}(\varepsilon y),
\end{eqnarray}
where $\mathcal{H}_{\varepsilon,\phi}:=\sum_{k=1}^{9}\mathcal{F}^{k}_{\varepsilon,\phi}$, thus the statement is true with $\mathcal{F}_{\varepsilon,\phi}:=\mathcal{H}_{\varepsilon,\phi}+\mathcal{G}_{\varepsilon,\phi}$.
\end{proof}

\section{Solving the auxiliary equation}
This Section will be devoted to the proofs of Propositions \ref{propaux_1} and \ref{propaux_2}. In both cases, we will first study the linear problem associated to our equation and then we will apply a contraction mapping principle.

\subsection{Solvabilty far away from $\Sigma_{\varepsilon}$: the linear problem}
We will prove the following Proposition.
\begin{proposition}
Let $0<\delta<\gamma<\sqrt{W^{''}(1)}$. Then, for any $\varepsilon>0$ small enough, for any $\phi\in B_{4}(\tau/4)$, and for any $f\in C^{0,\alpha}_{\gamma,s}(\R^{3})$, the equation  
\begin{eqnarray}
(-\Delta+\Gamma_{\varepsilon,\phi})^{2}V=f
\end{eqnarray}
admits a unique solution $V=\Psi_{\varepsilon,\phi}(f)$ in $C^{4,\alpha}_{\delta,s}(\R^{3})$ satisfying $||V||_{C^{4,\alpha}_{\delta}(\R^{3})}\leq c||f||_{C^{0,\alpha}_{\gamma}(\R^{3})}$, for some constant $c>0$ independent of $\varepsilon$ and $\phi$.
\label{inv4gamma}
\end{proposition}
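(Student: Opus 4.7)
The plan is to reduce the fourth-order problem to two successive applications of the second-order operator $-\Delta+\Gamma_{\varepsilon,\phi}$. Fix an intermediate rate $\delta<\delta_{1}<\gamma$ and prove the following auxiliary statement: for every $\gamma'\in(\delta_{1},\gamma]$ and every $g\in C^{0,\alpha}_{\gamma',s}(\R^{3})$, the equation $(-\Delta+\Gamma_{\varepsilon,\phi})u=g$ has a unique solution $u\in C^{2,\alpha}_{\delta_{1},s}(\R^{3})$ with $\|u\|_{C^{2,\alpha}_{\delta_{1}}(\R^{3})}\leq c\|g\|_{C^{0,\alpha}_{\gamma'}(\R^{3})}$, for a constant $c$ independent of $\varepsilon$ and $\phi$. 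Given this, Proposition \ref{inv4gamma} follows by first solving $(-\Delta+\Gamma_{\varepsilon,\phi})u=f$ to get $u\in C^{2,\alpha}_{\delta_{1},s}$ (in particular $u\in C^{0,\alpha}_{\delta_{1},s}$), and then solving $(-\Delta+\Gamma_{\varepsilon,\phi})V=u$ to get $V\in C^{4,\alpha}_{\delta,s}$, upgrading from $C^{2,\alpha}$ to $C^{4,\alpha}$ directly via the equation $-\Delta V=u-\Gamma_{\varepsilon,\phi}V$.

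For the auxiliary second-order problem, existence and uniqueness in $H^{1}(\R^{3})$ follow from Lax-Milgram: the bilinear form $a(u,v)=\int_{\R^{3}}(\nabla u\cdot\nabla v+\Gamma_{\varepsilon,\phi}uv)\,dx$ is coercive with constant $\min\{1,\gamma^{2}\}$ uniformly in $\varepsilon,\phi$ since $\Gamma_{\varepsilon,\phi}\geq\gamma^{2}$, and any $g\in C^{0,\alpha}_{\gamma'}$ belongs to $L^{2}(\R^{3})$ with an $L^{2}$ norm controlled by $\|g\|_{C^{0,\alpha}_{\gamma'}}$. Interior Schauder estimates on unit balls, together with the uniform bound $\Gamma_{\varepsilon,\phi}\leq W^{''}(1)+\tau_{0}$ and with uniform local $C^{0,\alpha}$ bounds on $\Gamma_{\varepsilon,\phi}$ (which follow from the explicit construction of $v_{\varepsilon,\phi}$), then promote $u$ to $C^{2,\alpha}_{\text{loc}}$ and yield local $C^{2,\alpha}$ control in terms of local $L^{\infty}$ bounds on $u$ and local $C^{0,\alpha}$ bounds on $g$.

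The technical heart of the argument is the exponential decay estimate, which I would prove via a barrier. Choose $R$ so large that $\delta_{1}(N-1)/|x|\leq(\gamma^{2}-\delta_{1}^{2})/2$ for $|x|\geq R$. On $\{|x|\geq R\}$,
\begin{eqnarray}\notag
(-\Delta+\Gamma_{\varepsilon,\phi})e^{-\delta_{1}|x|}=\bigg(\Gamma_{\varepsilon,\phi}-\delta_{1}^{2}+\frac{\delta_{1}(N-1)}{|x|}\bigg)e^{-\delta_{1}|x|}\geq\frac{\gamma^{2}-\delta_{1}^{2}}{2}e^{-\delta_{1}|x|},
\end{eqnarray}
while $|g(x)|\leq Ce^{-\gamma'|x|}\|g\|_{C^{0,\alpha}_{\gamma'}}=Ce^{-(\gamma'-\delta_{1})|x|}e^{-\delta_{1}|x|}\|g\|_{C^{0,\alpha}_{\gamma'}}$; using $\gamma'>\delta_{1}$, one can pick $C_{0}$ large so that $C_{0}e^{-\delta_{1}|x|}\pm u$ are super/subsolutions of $-\Delta+\Gamma_{\varepsilon,\phi}$ on $\{|x|\geq R\}$ with non-negative boundary values on $\{|x|=R\}$ (the boundary $L^{\infty}$ bound on $u$ comes from local Schauder plus Sobolev embedding applied to the $H^{1}$ bound). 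The standard maximum principle for uniformly elliptic operators with non-negative zeroth-order coefficient then gives $|u(x)|\leq C_{0}e^{-\delta_{1}|x|}$ on $\{|x|\geq R\}$, and combining with the interior Schauder estimates produces the desired weighted $C^{2,\alpha}_{\delta_{1}}$ bound.

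Symmetry is inherited automatically: by Remark \ref{rem_symm_v}, $v_{\varepsilon,\phi}$ is invariant under $T$ and under $SO_{x_{3}}(3)$, so the same is true of $\Gamma_{\varepsilon,\phi}$ and hence of the operator $(-\Delta+\Gamma_{\varepsilon,\phi})^{2}$; uniqueness then forces any symmetric datum $f$ to produce a symmetric solution $V$. The main obstacle I foresee is not the structure of the argument but rather keeping all constants uniform in $\varepsilon\in(0,\varepsilon_{0})$ and $\phi\in B_{4}(\tau/4)$ in the Schauder and barrier steps; this hinges on the uniform two-sided bound $\gamma^{2}\leq\Gamma_{\varepsilon,\phi}\leq W^{''}(1)+\tau_{0}$ and on uniform local $C^{0,\alpha}$ control of $\Gamma_{\varepsilon,\phi}$, both of which are built into the definition of $v_{\varepsilon,\phi}$ and into the fact that the cutoff $\chi_{1}$ comes from a fixed geometric profile simply rescaled by $\varepsilon$.
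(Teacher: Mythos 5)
Your proposal is correct and takes essentially the same approach as the paper: reduce to the second-order operator $-\Delta+\Gamma_{\varepsilon,\phi}$, apply it twice with an intermediate decay rate $\delta<\delta_{1}<\gamma$ (the paper's $\delta<\delta'<\gamma$), get uniform coercivity from $\gamma^{2}\leq\Gamma_{\varepsilon,\phi}\leq W''(1)+\tau_{0}$, obtain the weighted decay by a barrier and maximum-principle argument, upgrade regularity by Schauder estimates, and deduce the symmetries from uniqueness. The only cosmetic differences are that the paper extracts the quantitative bound $\|u\varphi_{\delta}\|_{\infty}\leq c\|f\varphi_{\gamma}\|_{\infty}$ by evaluating the conjugated equation at the maximum point of $u\varphi_{\delta}$ (using the $\gamma$-rate barrier only qualitatively), whereas you fold the data norm into the barrier constant $C_{0}$, and the paper reaches the third and fourth derivative estimates by differentiating the equation rather than by a single Schauder step for $-\Delta V=u-\Gamma_{\varepsilon,\phi}V$.
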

\begin{remark}
The symmetries of the solution follow for free from the symmetries of the laplacian and of $\Gamma_{\varepsilon,\phi}$.  
In fact, if $f\in C^{0,\alpha}_{\gamma,s}(\R^{3})$, and $V$ is a solution to $(-\Delta+\Gamma_{\varepsilon,\phi})^{2}V=f$, then also $u_{T}(x):=u(Tx)$ is a solution, thus, by uniqueness, $u=u_{T}$. The same argument also shows that $u=u_{R}$, for any $R\in SO_{x_{3}}(3)$, hence $u\in C^{4,\alpha}_{\delta,s}(\R^{3})$.
\label{rem_symm_far}
\end{remark}
We stress that the assumption $\delta<\gamma$ is crucial. When we solve the equation $(-\Delta+\Gamma_{\varepsilon,\phi})^{2}u=f$ we lose some regularity, in the sense that the solution might decay slower than $f$. 

We split the proof into some lemmas and a proposition, with the aid of some remarks. First we reduce ourselves to consider a second order PDE, then, by a bootstrap argument, we will solve our forth order equation.
\begin{proposition}
Let $0<\delta<\gamma<\sqrt{W^{''}(1)}$. Then, for any $\varepsilon>0$ small enough, for any $\phi\in B_{4}(\tau/4)$, and for any $f\in C^{0,\alpha}_{\gamma}(\R^{3})$, the equation  
\begin{eqnarray}
-\Delta u+\Gamma_{\varepsilon,\phi}u=f
\end{eqnarray}
admits a unique solution $u=\tilde{\Psi}_{\varepsilon,\phi}(f)$ in $C^{2,\alpha}_{\delta}(\R^{3})$ satisfying $||u||_{C^{2,\alpha}_{\delta}(\R^{3})}\leq c||f||_{C^{0,\alpha}_{\gamma}(\R^{3})}$, for some constant $c>0$ independent of $\varepsilon$ and $\phi$.
\label{inv2gamma}
\end{proposition}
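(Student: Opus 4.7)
The plan is to obtain the solution via Lax--Milgram in $H^{1}(\R^{3})$, then upgrade the regularity to $C^{2,\alpha}$ by standard Schauder estimates, and finally extract the exponential decay through a radial barrier argument that exploits the strict inequality $\delta<\gamma$. Since $\gamma^{2}\leq\Gamma_{\varepsilon,\phi}(x)\leq W''(1)+\tau_{0}$ uniformly in $x,\varepsilon$ and $\phi$, the bilinear form
\[
a(u,v):=\int_{\R^{3}}\bigl(\nabla u\cdot\nabla v+\Gamma_{\varepsilon,\phi}\,uv\bigr)\,dx
\]
is continuous and coercive on $H^{1}(\R^{3})$ with constants depending only on $\gamma$ and $W''(1)$. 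The datum $f\in C^{0,\alpha}_{\gamma}(\R^{3})$ decays like $e^{-\gamma|x|}$, so it lies in $L^{2}(\R^{3})$ and $v\mapsto\int fv$ is a continuous linear functional on $H^{1}$. Lax--Milgram therefore produces a unique $u\in H^{1}(\R^{3})$ satisfying the equation weakly, with $\|u\|_{H^{1}}\leq C\|f\|_{C^{0,\alpha}_{\gamma}}$.

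The key step is the pointwise exponential decay. Consider the radial function $\Phi(x):=e^{-\delta|x|}$. A direct computation for $|x|>0$ in $\R^{3}$ yields
\[
-\Delta\Phi+\Gamma_{\varepsilon,\phi}\Phi=\Bigl(\Gamma_{\varepsilon,\phi}-\delta^{2}+\tfrac{2\delta}{|x|}\Bigr)\Phi\geq(\gamma^{2}-\delta^{2})\Phi,
\]
which is strictly positive precisely because $\delta<\gamma$. Moreover $|f(x)|\leq\|f\|_{C^{0,\alpha}_{\gamma}}e^{-\gamma|x|}\leq\|f\|_{C^{0,\alpha}_{\gamma}}\Phi(x)$, and an $L^{\infty}$-bound for $u$ on the closed unit ball is obtained from $\|u\|_{H^{1}}$ via local elliptic regularity. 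Choose $M>0$ so that both $M(\gamma^{2}-\delta^{2})\geq\|f\|_{C^{0,\alpha}_{\gamma}}$ and $M\Phi\geq|u|$ on $\partial B_{1}$; then $M\Phi\pm u$ is a non-negative supersolution of $-\Delta+\Gamma_{\varepsilon,\phi}$ on $\R^{3}\setminus\overline{B_{1}}$ vanishing at infinity, and the weak maximum principle forces $|u(x)|\leq M\Phi(x)$ for $|x|\geq 1$. Combined with the interior $L^{\infty}$-bound this gives $|u(x)|\leq C\|f\|_{C^{0,\alpha}_{\gamma}}\,\varphi_{\delta}(x)^{-1}$ on all of $\R^{3}$.

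Finally, the potential $\Gamma_{\varepsilon,\phi}$ has $C^{0,\alpha}$-norm uniformly bounded in $\varepsilon,\phi$ (it is built from $W''$ composed with a uniformly $C^{2,\alpha}$ approximate solution plus smooth cut-offs), so interior Schauder estimates for $-\Delta u+\Gamma_{\varepsilon,\phi}u=f$ give
\[
\|u\|_{C^{2,\alpha}(B_{1}(x_{0}))}\leq c\bigl(\|u\|_{L^{\infty}(B_{2}(x_{0}))}+\|f\|_{C^{0,\alpha}(B_{2}(x_{0}))}\bigr)
\]
for every $x_{0}\in\R^{3}$, with $c$ independent of $x_{0},\varepsilon,\phi$. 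Plugging the pointwise exponential decay of $u$ and $f$ into this estimate and invoking the definition of the weighted norm $\|\cdot\|_{C^{2,\alpha}_{\delta}}$ delivers $\|u\|_{C^{2,\alpha}_{\delta}(\R^{3})}\leq c\|f\|_{C^{0,\alpha}_{\gamma}(\R^{3})}$ with $c$ uniform in $\varepsilon,\phi$.

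The real obstacle is the honest matching of decay rates: the barrier inequality relies on the strict gap $\gamma^{2}-\delta^{2}>0$, and it is exactly this gap, coupled with the uniform lower bound $\Gamma_{\varepsilon,\phi}\geq\gamma^{2}$ coming from the construction of $\Gamma_{\varepsilon,\phi}$, that makes every constant in the argument uniform in the parameters $\varepsilon$ and $\phi$. No new analytic idea beyond a correctly chosen exponential barrier is required; the strict inequality $\delta<\gamma$ is precisely what the statement demands and what the proof uses.
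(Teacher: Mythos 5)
Your proof is correct and follows the same overall scheme as the paper: variational existence and uniqueness (your Lax--Milgram is the paper's Riesz representation step), a weighted sup bound exploiting the gap $\delta<\gamma$ together with the uniform lower bound $\Gamma_{\varepsilon,\phi}\geq\gamma^{2}$, and uniform interior Schauder estimates to upgrade to $C^{2,\alpha}_{\delta}$. The one place you genuinely diverge is the decay estimate: you compare $u$ directly with the supersolution $Me^{-\delta|x|}$ on the exterior of the unit ball, whereas the paper uses the $e^{-\gamma|x|}$ barrier only qualitatively (to guarantee that $\tilde{u}_{\delta}=u\varphi_{\delta}$ tends to $0$ at infinity, so its supremum is attained) and then evaluates the equation satisfied by the conjugated function $\tilde{u}_{\delta}$ at its extremum point, which is where the factor $(\gamma^{2}-\delta^{2})^{-1}$ enters. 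Your route is a bit more elementary (a single comparison instead of conjugation plus a maximum-point argument) and gives the same bound; the paper's conjugation has the mild advantage that Step (iii) can then run Schauder estimates directly on the equation for $\tilde{u}_{\delta}$. One point you should make explicit: the weak maximum principle on the unbounded domain $\R^{3}\setminus\overline{B_{1}}$ needs $\liminf_{|x|\to\infty}(M\Phi\pm u)\geq 0$, i.e.\ that $u\to 0$ at infinity; this follows from $u\in H^{1}(\R^{3})$ and the uniform local elliptic estimate $\|u\|_{L^{\infty}(B_{1}(x_{0}))}\leq C\big(\|u\|_{L^{2}(B_{2}(x_{0}))}+\|f\|_{L^{2}(B_{2}(x_{0}))}\big)$ applied as $|x_{0}|\to\infty$, but you only recorded the bound on the unit ball (the paper leaves the same point implicit when it picks $R$ with $u<\sigma$ for $|x|\geq R$).
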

Before giving the proof, we state a technical Lemma.

\begin{lemma}
For any $1\leq p\leq\infty$, there exists a constant $C=C(p,\delta)>0$ such that, for any $u\in C^{0,\alpha}_{\delta}(\R^{3})$, we have
\begin{eqnarray}\notag
||u||_{L^{p}(\R^{3})}\leq C||u\varphi_{\delta}||_{\infty}.
\end{eqnarray}
\label{lemmaestp_infty}
\end{lemma}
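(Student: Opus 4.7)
The plan is to bound $|u(x)|$ pointwise by $\|u\varphi_\delta\|_\infty \cdot \varphi_\delta(x)^{-1}$, then exploit the exponential growth of $\varphi_\delta$ to estimate the $L^p$ norm of $\varphi_\delta^{-1}$.

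First I would handle the case $p=\infty$. Since $\zeta\geq 0$ and $1-\zeta\geq 0$, and $e^{\delta|x|}\geq 1$, the weight satisfies $\varphi_\delta(x)\geq \zeta(|x|)+(1-\zeta(|x|))=1$ for every $x\in\R^3$. Hence
\begin{eqnarray}\notag
\|u\|_{L^\infty(\R^3)}=\sup_{x\in\R^3}\frac{|u(x)\varphi_\delta(x)|}{\varphi_\delta(x)}\leq \|u\varphi_\delta\|_\infty,
\end{eqnarray}
so one can take $C(\infty,\delta)=1$.

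Next, for $1\leq p<\infty$ I would write $|u(x)|^p\leq \|u\varphi_\delta\|_\infty^p\,\varphi_\delta(x)^{-p}$ and integrate. The key observation is that $\varphi_\delta$ is continuous and bounded below by $1$ on the compact region $\{|x|\leq 2\}$ (giving a finite contribution to the integral) and coincides with $e^{\delta|x|}$ on $\{|x|\geq 2\}$ since $\zeta$ vanishes there. Consequently
\begin{eqnarray}\notag
\int_{\R^3}\varphi_\delta(x)^{-p}\,dx\leq |B_2|+\int_{|x|\geq 2}e^{-p\delta|x|}\,dx=:K(p,\delta)<\infty,
\end{eqnarray}
because the exponential decay rate $p\delta>0$ makes the tail integral convergent in any dimension. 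Therefore
\begin{eqnarray}\notag
\|u\|_{L^p(\R^3)}^p\leq \|u\varphi_\delta\|_\infty^p\cdot K(p,\delta),
\end{eqnarray}
which yields the claim with $C(p,\delta)=K(p,\delta)^{1/p}$.

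There is no real obstacle: the statement is just a weighted-$L^\infty$ to $L^p$ embedding. The only subtle point is making sure the weight $\varphi_\delta$ genuinely grows like $e^{\delta|x|}$ at infinity (so its reciprocal is $L^p$-integrable for every $p\geq 1$), which follows at once from the definition of the cut-off $\zeta$, and that it is bounded below away from zero (so the comparison with $\|u\varphi_\delta\|_\infty$ is valid), which follows from $\zeta,1-\zeta\geq 0$ and $e^{\delta|x|}\geq 1$.
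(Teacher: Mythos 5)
Your proof is correct and follows essentially the same route as the paper: both split $\R^{3}$ into a bounded region, where $\varphi_{\delta}\geq 1$ gives the bound via $\|u\varphi_{\delta}\|_{\infty}$, and an exterior region where the pointwise bound $|u|\leq\|u\varphi_{\delta}\|_{\infty}\varphi_{\delta}^{-1}$ together with the exponential growth of $\varphi_{\delta}$ makes the tail integral finite. The only (immaterial) difference is that the paper chooses a radius $R(\delta,p)$ making the tail integral at most $1$, while you fix the radius $2$ and keep the explicit constant $K(p,\delta)$.
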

\begin{proof}
The case $p=\infty$ is trivial, since $\varphi_{\delta}\geq 1$, so we can assume that $p<\infty$. We split the $L^{p}-$norm of $u$ into the sum of two terms, that is the integral over a ball of radius $R>1$ and its complement
\begin{eqnarray}\notag
\int_{\R^{N}}|u|^{p}dx=\int_{B_{R}}|u|^{p}dx+\int_{B_{R}^{c}}|u|^{p}dx.
\end{eqnarray}
The first term satisfies
\begin{eqnarray}\notag
\int_{B_{R}}|u|^{p}dx\leq|B_{R}| &\text{ }||u||_{\infty}^{p},
\end{eqnarray}
and the second one fulfills
\begin{eqnarray}\notag
\int_{B_{R}^{c}}|u|^{p}dx=\int_{B_{R}^{c}}(|u|\varphi_{\delta})^{p}\varphi_{-\delta}^{p}dx\leq
||u\varphi_{\delta}||_{\infty}^{p}\int_{B_{R}^{c}}\varphi_{-\delta}^{p}dx\leq
||u\varphi_{\delta}||_{\infty}^{p},
\end{eqnarray}
for some suitable $R=R(\delta,p)>1$, where we have set $\varphi_{-\delta}=1/\varphi_{\delta}$. 
\end{proof}

Now we are ready to prove Proposition \ref{inv2gamma}.
\begin{proof}
\textit{Step (i): existence, uniqueness and local H$\ddot{o}$lder regularity.}\\

Existence and uniqueness of the weak solution follow from the Riesz representation 
theorem. Since $f\in C^{0,\alpha}_{loc}(\R^{3})$, then $u\in C^{2,\alpha}_{loc}(\R^{3})$.\\ 

\textit{Step (ii): estimate for the $L^{\infty}$ norm.}\\

Now we will show that $u\varphi_{\delta}\in L^{\infty}(\R^{3})$ and 
\begin{eqnarray}
||u\varphi_{\delta}||_{\infty}\leq c||f\varphi_{\gamma}||_{\infty}.
\label{estnorminfty}
\end{eqnarray}
From now on, we will assume that $f$ is not identically $0$, and hence $||\tilde{u}_{\delta}||_{\infty}>0$, otherwise there is nothing to prove. As first we will prove that $\tilde{u}_{\delta}\to 0$ as $|x|\to\infty$. In order to do so, it is enough to show that $u\varphi_{\gamma}\in L^{\infty}(\R^{3})$. This will be done by using the function $e^{-\gamma|x|}$ as a barrier. More precisely, we fix $\rho>0$ and $|z|>\rho$. Then we fix $\sigma>0$ and $R>|z|$ so large that $u(x)<\sigma$ for $|x|\geq R$. Therefore $u$ fulfills
\begin{eqnarray}\notag
\begin{cases}
u<\max_{\partial B_{\rho}}u<\lambda e^{-\gamma\rho}<\lambda e^{-\gamma\rho}+\sigma &\text{for $|x|=\rho$}\\\notag
u<\sigma<\lambda e^{-\gamma R}+\sigma &\text{for $|x|=R$}\\\notag
(-\Delta+\Gamma_{\varepsilon,\phi})(u-(\lambda e^{-\gamma|x|}+\sigma))\leq \bigg(c-\lambda\frac{N-1}{R}\bigg)e^{-\gamma r}\leq 0 &\text{for $\rho<|x|<R$,}
\end{cases}
\end{eqnarray}
provided $\lambda\geq\lambda_{0}$, with $\lambda_{0}$ independent of $\sigma$. By the maximum principle we get that $u(z)<\lambda e^{-\gamma|z|}+\sigma$, for any $|z|\geq\rho$ and for any $\sigma>0$. Letting $\sigma\to 0$, we get that $u\varphi_{\gamma}\in L^{\infty}$.

Since $\tilde{u}_{\delta}\to 0$ as $|x|\to\infty$, the supremum is achieved at some point $y\in\R^{3}$, that is $||\tilde{u}_{\delta}||_{\infty}=|\tilde{u}_{\delta}(y)|$.
Now we consider two cases. If $|y|\leq 1$, we apply the elliptic estimates to control the $L^{\infty}$ norm of $u\varphi_{\delta}$ with $\tilde{f}_{\gamma}$, otherwise we use the equation for $\tilde{u}_{\delta}$. 

Let us consider the case $|y|\leq 1$. We observe that 
$\tilde{u}_{\delta}\equiv u$ in $B_{1}(0)$ and we apply elliptic estimates to get that
\begin{eqnarray}\notag
||u||_{L^{\infty}(B_{1}(0))}\leq C||u||_{W^{2,2}(B_{2}(0))}\leq C(||u||_{L^{2}(B_{2}(0))}+||f||_{L^{2}(B_{2}(0))}).
\end{eqnarray}
Now we multiply the equation $(-\Delta+\Gamma_{\varepsilon,\phi})u=f$ by $u$ and integrate by parts to obtain that 
\begin{eqnarray}\notag
||u||_{L^{2}(B_{2}(0))}\leq||u||_{H^{1}(\R^{3})}\leq C||f||_{L^{2}(\R^{3})}.
\end{eqnarray}
Moreover, by Lemma \ref{lemmaestp_infty} applied with $p=2$, we get
\begin{eqnarray}\notag
||f||_{L^{2}(\R^{3})}\leq c||\tilde{f}_{\gamma}||_{\infty}.
\end{eqnarray} 
Exactly in the same way, we can estimate the term $||f||_{L^{2}(B_{2})}$. We point out that all the constants are independent of $\varepsilon$ and $\phi$, because the potential is positive and bounded away from $0$ uniformly in $\varepsilon$ and $\phi$.

Now let us turn to the case in which the maximum point of $|\tilde{u}_{\delta}|$ is achieved outside $B_{1}$. The equation satisfied by $\tilde{u}_{\delta}$ is
\begin{eqnarray}
-\Delta\tilde{u}_{\delta}+\Gamma_{\varepsilon,\phi}\tilde{u}_{\delta}=\tilde{f}_{\delta}-2
<\nabla u,\nabla\varphi_{\delta}>-u\Delta
\varphi_{\delta}=\\\notag
\tilde{f}_{\delta}-2\varphi_{-\delta}<\nabla\tilde{u}_{\delta},\nabla\varphi_{\delta}>
-\tilde{u}_{\delta}
(2<\nabla\varphi_{\delta},\nabla\varphi_{-\delta}>-\varphi_{-\delta}\Delta
\varphi_{\delta}).
\end{eqnarray}
If $\tilde{u}_{\delta}(y)>0$, then $\tilde{u}_{\delta}$ has a global maximum point at $y$, and a computation shows that
\begin{eqnarray}\notag
0<\gamma^{2}\tilde{u}_{\delta}(y)<\Gamma_{\varepsilon,\phi}(y)\tilde{u}_{\delta}(y)\leq-\Delta\tilde{u}_{\delta}(y)+\gamma^{2}\tilde{u}_{\delta}(y)\leq
\tilde{f}_{\delta}(y)+\delta^{2}\tilde{u}(y),
\end{eqnarray}
the estimate being uniform in $\varepsilon$ and in $\phi$, hence
\begin{eqnarray}\notag
\tilde{u}_{\delta}(y)\leq\frac{1}{\gamma^{2}-\delta^{2}}\tilde{f}_{\delta}(y)\leq c\tilde{f}_{\gamma}(y),
\end{eqnarray}
so we have (\ref{estnorminfty}). If $\tilde{u}_{\delta}(y)<0$, then $\tilde{u}_{\delta}$ has a global minimum at $y$, and the conclusion follows from a similar argument.\\

\textit{Step (iii): estimates for higher order derivatives.}

Now we will show that
\begin{eqnarray}
||\tilde{u}_{\delta}||_{C^{2,\alpha}(\R^{3})}\leq c||\tilde{f}_{\gamma}||_{C^{0,\alpha}(\R^{3})},
\end{eqnarray}
for some constant $c>0$. In order to do so, we observe that, by elliptic estimates (see \cite{GT}), we have that, for any $x\in\R^{3}$
\begin{eqnarray}\notag
||\tilde{u}_{\delta}||_{C^{2,\alpha}(B_{1}(x))}\leq(||\tilde{f}_{\delta}||
_{C^{0,\alpha}(B_{2}(x))}+||\tilde{u}_{\delta}||_{L^{\infty}(B_{2}(x))})\leq c||\tilde{f}_{\gamma}||_{C^{0,\alpha}}(\R^{3}),
\end{eqnarray}
the constants being independent of $\varepsilon$ and $\phi$.
\end{proof}

Now we can conclude the proof of Proposition \ref{inv4gamma}.
\begin{proof}
Given $f\in C^{0,\alpha}_{\varepsilon,\gamma}(\R^{3})$, we have to find $V\in C^{4,\alpha}_{\varepsilon,\delta}(\R^{3})$ fulfilling
\begin{eqnarray}
\begin{cases}
(-\Delta+\Gamma_{\varepsilon,\phi})^{2}V=f\\\notag
||V||_{C^{4,\alpha}_{\delta}(\R^{3})}\leq c||f||_{C^{0,\alpha}_{\gamma}(\R^{3})}.
\end{cases}
\end{eqnarray}
In order to do so, we use proposition \ref{inv2gamma} twice to find $w\in C^{2,\alpha}_{\varepsilon,\delta^{'}}(\R^{3})$ and $u\in C^{2,\alpha}_{\varepsilon,\delta}(\R^{3})$, with $0<\delta<\delta^{'}<\gamma$, such that
\begin{eqnarray}
\begin{cases}\notag
(-\Delta+\Gamma_{\varepsilon,\phi})u=f\\
(-\Delta+\Gamma_{\varepsilon,\phi})V=u,
\end{cases}
\end{eqnarray}
and 
\begin{eqnarray}
\begin{cases}
||u||_{C^{2,\alpha}_{\delta^{'}}(\R^{3})}\leq c||f||_{C^{0,\alpha}_{\gamma}(\R^{3})}\\\notag
||V||_{C^{2,\alpha}_{\delta}(\R^{3})}\leq c||u||_{C^{0,\alpha}_{\delta^{'}}(\R^{3})}.
\end{cases}
\end{eqnarray}
Now it remains to estimate the higher order derivatives of $u$. For this purpose, we differentiate the equation satisfied by $u$ and we get $(-\Delta+\Gamma_{\varepsilon,\phi})V_{j}=u_{j}-(\Gamma_{\varepsilon,\phi})_{j}V$, for $j=1,\dots,3$, hence, applying the regularity estimates for $(-\Delta+\Gamma_{\varepsilon,\phi})$, 
\begin{eqnarray}\notag
||V_{j}||_{C^{2,\alpha}_{\delta}(\R^{3})}\leq c(||u_{j}||_{C^{0,\alpha}_{\delta^{'}}(\R^{3})}+||f||_{C^{0,\alpha}_{\gamma}(\R^{3})})\\\notag
\leq c(||u_{j}\varphi_{\delta^{'}}||_{C^{0,\alpha}(\R^{3})}+||f||_{C^{0,\alpha}_{\gamma}(\R^{3})})\leq c(||u_{j}\varphi_{\delta^{'}}||_{C^{1}(\R^{3})}+||f||_{C^{0,\alpha}_{\gamma}(\R^{3})}),
\end{eqnarray}
hence
\begin{eqnarray}\notag
||\nabla^{3}(V\varphi_{\delta})||_{\infty}\leq c(||u||_{C^{2,\alpha}_{\delta^{'}}(\R^{3})}+||f||_{C^{0,\alpha}_{\gamma}(\R^{3})})\leq c||f||_{C^{0,\alpha}_{\gamma}(\R^{3})}.
\end{eqnarray} 
Similarly, differentiating the equation once again, we see that 
\begin{eqnarray}\notag
(-\Delta+\Gamma_{\varepsilon,\phi})V_{ij}=u_{ij}-(\Gamma_{\varepsilon,\phi})_{i}V_{j}-(\Gamma_{\varepsilon,\phi})_{j}V_{i}
-(\Gamma_{\varepsilon,\phi})_{ij}V,
\end{eqnarray}
for $i,j=1,\dots,3$, so in particular
\begin{eqnarray}\notag
||V_{ij}||_{C^{2,\alpha}_{\delta}(\R^{3})}\leq c||u_{ij}||_{C^{0,\alpha}_{\delta^{'}}(\R^{3})}=
c(||u_{ij}\varphi_{\delta^{'}}||_{C^{0,\alpha}(\R^{3})}+||f||_{C^{0,\alpha}_{\gamma}(\R^{3})})\\\notag
\leq c(||u_{ij}\varphi_{\delta^{'}}||_{C^{1}(\R^{3})}+||f||_{C^{0,\alpha}_{\gamma}(\R^{3})}),
\end{eqnarray}
therefore
\begin{eqnarray}\notag
||\nabla^{4}(V\varphi_{\delta})||_{\infty}+[V\varphi_{\delta}]_{0,\alpha}\leq c(||u||_{C^{2,\alpha}_{\delta^{'}}(\R^{3})}+||f||_{C^{0,\alpha}_{\gamma}(\R^{3})})\leq c||f||_{C^{0,\alpha}_{\gamma}(\R^{3})},
\end{eqnarray}
all the constants being independent of $\varepsilon$ and $\phi$.
\end{proof}

\subsection{The proof of Proposition \ref{propaux_1}: solving equation (\ref{eq_aux1}) by a fixed point argument}
Equation (\ref{eq_aux1}) is equivalent to the fixed point problem
\begin{eqnarray}\notag
V=T_{1}(V):=\Psi_{\varepsilon,\phi}\bigg\{(1-\chi_{2})F(\tilde{v}_{\varepsilon,\phi})+(1-\chi_{1})Q_{\varepsilon,\phi}(\chi_{2}U+V)
+\text{N}_{\varepsilon,\phi}(U)+\text{P}_{\varepsilon,\phi}(V)\bigg\},
\end{eqnarray}
that we will solve by showing that $T_{1}$ is a contraction on the ball
\begin{eqnarray}\notag
\Lambda_{1}:=\{V\in C^{4,\alpha}_{\delta,s}(\R^{3}):||V||_{C^{4,\alpha}_{\delta}(\R^{3})}\leq C_{1}e^{-a/\varepsilon}\},
\end{eqnarray}
provided the constant $C_{1}$ is large enough. In fact, by the exponential decay of $U$ far from $\Sigma_{\varepsilon}$, we get that
\begin{eqnarray}\notag
||\text{N}_{\varepsilon,\phi}(U)||_{C^{4,\alpha}_{\delta}(\R^{3})}\leq\tilde{c}e^{-a/\varepsilon},
\end{eqnarray}
for some constants $a,\tilde{c}>0$ depending on $W,\tau,\delta$ but not of $\varepsilon$ and $\phi$. By (\ref{expdecdx}) and (\ref{expdecsx}), the same is true for $(1-\chi_{2})F(\tilde{v}_{\varepsilon,\phi})$. Moreover, by (\ref{first_time}), (\ref{expdecdx}) and (\ref{expdecsx}), 
\begin{eqnarray}\notag
||\text{P}_{\varepsilon,\phi}(V)||_{C^{4,\alpha}_{\delta}(\R^{3})}\leq ce^{-a/\varepsilon}||V||_{C^{4,\alpha}_{\delta}(\R^{3})}\leq ce^{-2a/\varepsilon},
\end{eqnarray}
with $c>0$ depending on $W,\tau,\delta$ but not of $\varepsilon$ and $\phi$. Moreover, using that
\begin{eqnarray}\notag
||(1-\chi_{1})V||_{C^{4,\alpha}_{\delta}(\R^{3})}\leq c||V||_{C^{4,\alpha}_{\delta}(\R^{3})}
\end{eqnarray}
and
\begin{eqnarray}\notag
||(1-\chi_{1})\chi_{2}U||_{C^{4,\alpha}_{\delta}(\R^{3})}\leq ce^{-a/\varepsilon},
\end{eqnarray}
where $(1-\chi_{1})\chi_{2}U$ is understood to be $0$ outside the support of $\chi_{2}$, and the definition of $Q_{\varepsilon,\phi}$ (see (\ref{quadratic_w})), we get
\begin{eqnarray}\notag
||(1-\chi_{1})Q_{\varepsilon,\phi}(\chi_{2}U+V)||_{C^{4,\alpha}_{\delta}(\R^{3})}\leq ce^{-2a/\varepsilon}.
\end{eqnarray}
Up to now, we have just proved that $T_{1}$ maps $\Lambda_{1}$ in itself. In order to show that it is actually a contraction, we need to estimate its Lipschitz constant. The only terms depending on $V$ are $P_{\varepsilon,\phi}$, that fulfills
\begin{eqnarray}\notag
||P_{\varepsilon,\phi}(V_{1})-P_{\varepsilon,\phi}(V_{2})||_{C^{4,\alpha}_{\delta}(\R^{3})}\leq c\varepsilon||V_{1}-V_{2}||_{C^{4,\alpha}_{\delta}(\R^{3})}
\end{eqnarray}
for some constant $c>0$ independent of $\varepsilon$ and $\phi$, and $(1-\chi_{1})Q_{\varepsilon,\phi}(\chi_{2}U+V)$, that fulfills
\begin{eqnarray}\notag
||(1-\chi_{1})(Q_{\varepsilon,\phi}(\chi_{2}U+V)-Q_{\varepsilon,\phi}(\chi_{2}U+V))||_{C^{4,\alpha}_{\delta}(\R^{3})}\leq ce^{-a/\varepsilon}||V_{1}-V_{2}||_{C^{4,\alpha}_{\delta}(\R^{3})}.
\end{eqnarray}

\textit{Lipschitz dependence on $U$ and $\phi$.}\\

Given $\phi\in B_{4}(\tau/4)$ and $U_{1},U_{2}\in C^{4,\alpha}(\Sigma_{\varepsilon}\times\R)$, the difference between the solutions $V_{\varepsilon,\phi, U_{1}}$ and $V_{\varepsilon,\phi, U_{1}}$ fulfills
\begin{eqnarray}\notag
(-\Delta+\Gamma_{\varepsilon,\phi})^{2}(V_{\varepsilon,\phi, U_{1}}-V_{\varepsilon,\phi, U_{2}})=(1-\chi_{1})(Q_{\varepsilon,\phi}(\chi_{2}U_{2}+V_{\varepsilon,\phi,U_{2}})-
Q_{\varepsilon,\phi}(\chi_{2}U_{1}+V_{\varepsilon,\phi,U_{1}}))\\\notag
+\text{N}_{\varepsilon,\phi}(U_{2})-\text{N}_{\varepsilon,\phi}(U_{1})+\text{P}_{\varepsilon,\phi}(V_{\varepsilon,\phi,U_{2}})
-\text{P}_{\varepsilon,\phi}(V_{\varepsilon,\phi,U_{1}}).
\label{lipU}
\end{eqnarray}
By (\ref{defN}), the terms involving $\text{N}_{\varepsilon,\phi}$ satisfy
\begin{eqnarray}\notag
||\text{N}_{\varepsilon,\phi}(U_{1})-\text{N}_{\varepsilon,\phi}(U_{2})||_{C^{0,\alpha}_{\gamma}(\R^{3})}\leq ce^{-a/\varepsilon}||U_{2}-U_{1}||_{C^{4,\alpha}_{\delta}(\Sigma_{\varepsilon}\times\R)}.
\end{eqnarray}
By (\ref{defP}), the terms involving $\text{N}_{\varepsilon,\phi}$ can be estimated with the difference between the solutions, that is
\begin{eqnarray}
||\text{P}_{\varepsilon,\phi}(V_{\varepsilon,\phi,U_{1}})-\text{P}_{\varepsilon,\phi}(V_{\varepsilon,\phi,U_{2}})||_{C^{0,\alpha}_{\gamma}(\R^{3})}\leq ce^{-a/\varepsilon}||V_{\varepsilon,\phi,U_{1}}-V_{\varepsilon,\phi,U_{2}}||_{C^{4,\alpha}_{\delta}(\R^{3})},
\end{eqnarray}
and 
\begin{eqnarray}\notag
||(1-\chi_{1})(Q_{\varepsilon,\phi}(\chi_{2}U_{1}+V_{\varepsilon,\phi,U_{1}})-Q_{\varepsilon,\phi}(\chi_{2}U_{2}+V_{\varepsilon,\phi,U_{2}}))||
_{C^{0,\alpha}_{\gamma}(\R^{3})}\leq\\\notag
ce^{-a/\varepsilon}(||V_{\varepsilon,\phi,U_{1}}-V_{\varepsilon,\phi,U_{2}}||_{C^{4,\alpha}_{\delta}(\R^{3})}+
||U_{1}-U_{2}||_{C^{4,\alpha}_{\delta}(\Sigma_{\varepsilon}\times\R)}).
\end{eqnarray}
Therefore, applying $\Psi_{\varepsilon,\phi}$ to the right-hand side of (\ref{lipU}), we obtain
\begin{eqnarray}\notag
||V_{\varepsilon,\phi,U_{1}}-V_{\varepsilon,\phi,U_{2}}||_{C^{4,\alpha}_{\delta}(\R^{3})}\leq\\\notag
ce^{-a/\varepsilon}(||V_{\varepsilon,\phi,U_{1}}-V_{\varepsilon,\phi,U_{2}}||_{C^{4,\alpha}_{\delta}(\R^{3})}+
||U_{1}-U_{2}||_{C^{4,\alpha}_{\delta}(\Sigma_{\varepsilon}\times\R)}),
\end{eqnarray}
thus, reabsorbing the norm of the difference between the solutions,
\begin{eqnarray}\notag
\frac{1}{2}||V_{\varepsilon,\phi,U_{1}}-V_{\varepsilon,\phi,U_{2}}||_{C^{4,\alpha}_{\delta}(\R^{3})}
\leq(1-ce^{-a/\varepsilon})||V_{\varepsilon,\phi,U_{1}}-V_{\varepsilon,\phi,U_{2}}||_{C^{4,\alpha}_{\delta}(\R^{3})}\\\notag
\leq ce^{-a/\varepsilon}||U_{1}-U_{2}||_{C^{4,\alpha}_{\delta}(\Sigma_{\varepsilon}\times\R)}.
\end{eqnarray}
The Lipschitz dependence on $\phi$ can be treated with a similar argument. It is worth to point out that also the potential $\Gamma_{\varepsilon,\phi}$ depends on $\phi$, through the approximate solution and the cutoff function. However, this dependence is mild enough for our purposes, in fact
the difference of the potentials $\Gamma_{\varepsilon,\phi_{1}}-\Gamma_{\varepsilon,\phi_{2}}$ is exponentially small in $\varepsilon$.

\subsection{Invertibility in a neighbourhood of $\Sigma_{\varepsilon}$: the linear problem}
Now we look for a solution to equation (\ref{eq_aux2}) respecting the symmetries of the Torus. First we study the linear operator $\mathcal{L}_{\varepsilon}^{2}$.
\begin{proposition}
Let $0<\delta<\sqrt{W^{''}(\pm 1)}$ and $\phi\in B_{4}(1/4)$. For any $f\in\mathcal{E}^{0,\alpha}_{\delta}(\Sigma_{\varepsilon}\times\R)$, there exists a unique solution $U=G_{\varepsilon}(f)$ in $\mathcal{E}^{4,\alpha}_{\delta}(\Sigma_{\varepsilon}\times\R)$ to $\mathcal{L}_{\varepsilon}^{2}U=f$ such that
\begin{eqnarray}\notag
||U||_{C^{4,\alpha}_{\delta}(\Sigma_{\varepsilon}\times\R)}\leq C||f||_{C^{0,\alpha}_{\delta}(\Sigma_{\varepsilon}\times\R)},
\end{eqnarray}
for some constant $C>0$ which is independent of $\varepsilon$. 
\label{propflat}
\end{proposition}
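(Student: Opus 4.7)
The strategy is to invert $\mathcal{L}_{\varepsilon}$ once on $\mathcal{E}$ with a uniform-in-$\varepsilon$ bound and then apply this inversion twice. More precisely, I aim to prove that $\mathcal{L}_{\varepsilon} : \mathcal{E}^{2,\alpha}_{\delta}(\Sigma_{\varepsilon}\times\R) \to \mathcal{E}^{0,\alpha}_{\delta}(\Sigma_{\varepsilon}\times\R)$ is an isomorphism with inverse bounded independently of $\varepsilon$; then, given $f \in \mathcal{E}^{0,\alpha}_{\delta}$, I first solve $\mathcal{L}_{\varepsilon}W = f$ with $W \in \mathcal{E}^{2,\alpha}_{\delta}$ and subsequently $\mathcal{L}_{\varepsilon}U = W$ with $U \in \mathcal{E}^{4,\alpha}_{\delta}$, so that $\mathcal{L}_{\varepsilon}^{2}U = f$. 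Uniqueness follows from the same inversion: if $\mathcal{L}_{\varepsilon}^{2}U = 0$ with $U \in \mathcal{E}^{4,\alpha}_{\delta}$, pairing against $U$ and integrating by parts (legitimate thanks to the exponential decay) yields $\mathcal{L}_{\varepsilon}U=0$, and injectivity of $\mathcal{L}_{\varepsilon}$ on $\mathcal{E}$ then forces $U\equiv 0$. One first checks that $\mathcal{L}_{\varepsilon}$ preserves the pointwise-in-$y$ orthogonality: multiplying $\mathcal{L}_{\varepsilon}U$ by $v_{\star}'(t)$ and integrating in $t$, the Laplacian term produces $-\Delta_{\Sigma_{\varepsilon}}\int U\,v_{\star}'\,dt = 0$, while the remaining $t$-integral reduces, after two integrations by parts and the decay of $U$, to $\int U\cdot L_{\star}v_{\star}'\,dt=0$.

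The core of the argument is the coercivity of $\mathcal{L}_{\varepsilon}$ on the constrained space. Classical spectral theory for the ODE operator $L_{\star} = -\partial_{tt} + W''(v_{\star})$ says that $v_{\star}'$ is its simple ground state at eigenvalue $0$, and the rest of the spectrum lies above a gap $\lambda_{1}>0$ depending only on $W$ (the essential spectrum starts at $W''(\pm 1)>0$). Hence $\langle L_{\star}\psi,\psi\rangle_{L^{2}(\R)} \geq \lambda_{1}\|\psi\|_{L^{2}}^{2}$ for $\psi \perp v_{\star}'$. Combined with $-\Delta_{\Sigma_{\varepsilon}} \geq 0$, this yields, uniformly in $\varepsilon$,
\[
\int_{\Sigma_{\varepsilon}\times\R}\bigl(|\nabla_{\Sigma_{\varepsilon}}U|^{2} + (\partial_{t}U)^{2} + W''(v_{\star})U^{2}\bigr)\,d\sigma\,dt \;\geq\; \lambda_{1}\int_{\Sigma_{\varepsilon}\times\R} U^{2}\,d\sigma\,dt
\]
for every $U \in H^{1}(\Sigma_{\varepsilon}\times\R)$ satisfying $\int_{\R}U(y,t)\,v_{\star}'(t)\,dt=0$ for a.e.\ $y$. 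Lax--Milgram then produces a unique weak solution $W \in H^{1}$ of $\mathcal{L}_{\varepsilon}W=f$ with $\|W\|_{H^{1}} \leq C(\lambda_{1})\|f\|_{L^{2}}$, the constant independent of $\varepsilon$.

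Upgrading to the weighted Hölder estimate has two ingredients. Interior Schauder estimates applied in normal charts of fixed size on $\Sigma_{\varepsilon}\times\R$ give local $C^{2,\alpha}$ control; the constants are $\varepsilon$-independent because the metric of $\Sigma_{\varepsilon}$ in such charts is a rescaling of a fixed smooth metric on $\Sigma$, with $C^{k,\alpha}$ norms uniformly bounded. The exponential decay is obtained by a barrier argument: since $W''(v_{\star}(t))\to W''(\pm 1)>\delta^{2}$ as $|t|\to\infty$, for $|t|$ large the function $\psi_{\delta}(t)^{-1}\sim e^{-\delta|t|}$ satisfies $\mathcal{L}_{\varepsilon}(\psi_{\delta}^{-1}) \geq \kappa\,\psi_{\delta}^{-1}$ for some $\kappa>0$; comparing $W$ with $C\|f\|_{C^{0,\alpha}_{\delta}}\,\psi_{\delta}^{-1}$ on $\Sigma_{\varepsilon}\times\{|t|>R\}$ (a cylinder with compact cross-section) and invoking the maximum principle together with $-\Delta_{\Sigma_{\varepsilon}}\geq 0$ yields the pointwise decay $|W(y,t)| \lesssim \|f\|_{C^{0,\alpha}_{\delta}}\,\psi_{\delta}(t)^{-1}$. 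Differentiating the equation and bootstrapping delivers the full $C^{2,\alpha}_{\delta}$ norm.

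The chief obstacle is keeping every constant independent of $\varepsilon$, despite the fact that $\Sigma_{\varepsilon}$ has diameter $\sim 1/\varepsilon$ and no global compactness is available. What rescues us is that the two key inputs --- the spectral gap $\lambda_{1}$ of $L_{\star}$ and the local Schauder constants in normal charts of size $O(1)$ --- depend only on $W$ and on the smooth geometry of the fixed torus $\Sigma$, not on $\varepsilon$. Once $\mathcal{L}_{\varepsilon}^{-1}:\mathcal{E}^{0,\alpha}_{\delta}\to\mathcal{E}^{2,\alpha}_{\delta}$ has been established with such a uniform bound, applying it first to $f$ and then to $\mathcal{L}_{\varepsilon}^{-1}f$ produces the required $U\in\mathcal{E}^{4,\alpha}_{\delta}$ with $\|U\|_{C^{4,\alpha}_{\delta}(\Sigma_{\varepsilon}\times\R)} \leq C\|f\|_{C^{0,\alpha}_{\delta}(\Sigma_{\varepsilon}\times\R)}$, proving Proposition~\ref{propflat}.
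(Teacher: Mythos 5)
Your proof has the same skeleton as the paper's (invert $\mathcal{L}_{\varepsilon}$ twice, using the spectral gap of $L_{\star}=-\partial_{tt}+W^{''}(v_{\star})$ together with the fibrewise orthogonality to $v_{\star}^{'}$, then bootstrap to the fourth-order norm), and your coercivity estimate on the space of functions satisfying $\int_{\R}U(y,t)v_{\star}^{'}(t)dt=0$ for a.e.\ $y$ is correct and uniform in $\varepsilon$ — indeed cleaner than the paper's preliminary lemma, which works on the globally orthogonal space $\mathcal{O}$ and only obtains the $\varepsilon$-dependent constant $\varepsilon^{2}\lambda_{1}$ before invoking \cite{PR}. (One small point you leave implicit: Lax--Milgram on the constrained subspace gives $\mathcal{L}_{\varepsilon}W=f+c(y)v_{\star}^{'}(t)$ a priori; $c\equiv 0$ follows by projecting onto $v_{\star}^{'}$ and using that both $f$ and $\mathcal{L}_{\varepsilon}W$ have zero fibrewise projection, which requires the regularity you only obtain later.)

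The genuine gap is in the passage from the $H^{1}$ bound to the $\varepsilon$-uniform weighted sup/H\"older bound. Interior Schauder estimates in unit charts give $||W||_{C^{2,\alpha}(B_{1})}\leq C(||f||_{C^{0,\alpha}(B_{2})}+||W||_{L^{\infty}(B_{2})})$, so they need a uniform weighted $L^{\infty}$ bound on $W$ as input; your only global estimate is $||W||_{H^{1}}\leq C||f||_{L^{2}(\Sigma_{\varepsilon}\times\R)}$, and since the area of $\Sigma_{\varepsilon}$ is of order $\varepsilon^{-2}$, converting $||f||_{L^{2}}$ into $||f||_{C^{0,\alpha}_{\delta}}$ costs a factor $\varepsilon^{-1}$; run through your scheme this yields $||U||_{C^{4,\alpha}_{\delta}}\leq C\varepsilon^{-2}||f||_{C^{0,\alpha}_{\delta}}$, not the claimed $\varepsilon$-independent bound. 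The barrier argument only controls $|t|>R$ and presupposes the bound on $\{|t|=R\}$; on the core region $|t|\leq R$ the potential $W^{''}(v_{\star}(t))$ is negative near $t=0$, so there is no maximum principle, and the fibrewise orthogonality must be exploited quantitatively at the level of sup norms. This is precisely the estimate the paper does not reprove but imports from Proposition $8.3$ of \cite{PR}; the standard proof is a contradiction/blow-up argument (normalize $||W\psi_{\delta}||_{\infty}=1$ with data tending to $0$, translate to near-maximum points, extract a limit solving $(-\partial_{tt}-\Delta_{\R^{2}}+W^{''}(v_{\star}))W_{\infty}=0$, bounded and orthogonal to $v_{\star}^{'}$ on each fibre, hence $W_{\infty}\equiv 0$, contradicting the normalization). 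Without this step, or an equivalent argument, the uniform constant $C$ in the Proposition is not established; the remaining ingredients of your proposal (mapping $\mathcal{E}$ into itself, uniqueness, decay at infinity, and the two-step bootstrap to $C^{4,\alpha}_{\delta}$) are sound and parallel the paper.
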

If $f$ respects the symmetries of the Torus, then also the solution $U=G_{\varepsilon}f$ does. In other words, $G_{\varepsilon}$ maps $\mathcal{E}^{4,\alpha}_{\delta,s}(\Sigma_{\varepsilon}\times\R)$ into $\mathcal{E}^{0,\alpha}_{\delta,s}(\Sigma_{\varepsilon}\times\R)$. This fact follows from uniqueness. 

It is useful to see that we can control the odd part of the solution with the odd part (in $t$) of $f$ and the same is true for the even parts.
\begin{lemma}
Let $0<\delta<\sqrt{W(1)}$ and $f\in C^{0,\alpha}_{\delta,s}(\Sigma_{\varepsilon}\times\R)$. Let $U\in C^{4,\alpha}_{\delta}(\Sigma_{\varepsilon}\times\R)$ be the solution to $\mathcal{L}_{\varepsilon}^{2}U=f$. Then
\begin{eqnarray}
\begin{cases}
||U_{o}||_{C^{4,\alpha}_{\delta}(\Sigma_{\varepsilon}\times\R)}\leq c||f_{o}||_{C^{0,\alpha}_{\delta}(\Sigma_{\varepsilon}\times\R)}\\\notag
||U_{e}||_{C^{4,\alpha}_{\delta}(\Sigma_{\varepsilon}\times\R)}\leq c||f_{e}||_{C^{0,\alpha}_{\delta}(\Sigma_{\varepsilon}\times\R)},
\end{cases}
\end{eqnarray}
where $c$ is the constant found in Proposition \ref{propflat}.
\label{Lemma_oe}
\end{lemma}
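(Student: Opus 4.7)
The plan is to exploit the parity in $t$ of the operator $\mathcal{L}_\varepsilon$. Since $W$ is assumed even, the derivative $W''$ is also even, and since $v_\star$ solves an odd Cauchy problem it is odd in $t$, so $W''(v_\star(t))$ is an even function of $t$. The operator $\Delta_{\Sigma_\varepsilon}$ does not differentiate in $t$ at all, while $\partial_{tt}$ evidently commutes with the parity operator $t \mapsto -t$. Therefore $\mathcal{L}_\varepsilon$ maps the subspace of functions odd in $t$ to itself and the subspace of functions even in $t$ to itself; the same is then true of $\mathcal{L}_\varepsilon^2$.

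The concrete strategy is to write $U=U_o+U_e$ and $f=f_o+f_e$ and exploit uniqueness in Proposition \ref{propflat}. Applying $\mathcal{L}_\varepsilon^2$ to $U_o$ produces an odd (in $t$) function, while $\mathcal{L}_\varepsilon^2 U_e$ is even; since the decomposition $f_o+f_e$ of $f$ into its odd and even parts is unique, we must have $\mathcal{L}_\varepsilon^2 U_o=f_o$ and $\mathcal{L}_\varepsilon^2 U_e=f_e$ pointwise. Now note that the symmetries encoded by $T$ and $R\in SO_{x_3}(3)$ act only on $y\in\Sigma_\varepsilon$, so they commute with the $t$-parity projections; consequently $U_o,U_e\in C^{4,\alpha}_{\delta,s}(\Sigma_\varepsilon\times\R)$ and likewise $f_o,f_e\in C^{0,\alpha}_{\delta,s}(\Sigma_\varepsilon\times\R)$.

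Next I would check the orthogonality condition required to invoke Proposition \ref{propflat}. Since $v_\star$ is odd its derivative $v_\star'$ is even, so for any $y\in\Sigma_\varepsilon$ the product $U_o(y,\cdot)v_\star'$ is odd and integrates to zero automatically; hence $U_o\in\mathcal{E}^{4,\alpha}_{\delta,s}(\Sigma_\varepsilon\times\R)$. Because $U\in\mathcal{E}^{4,\alpha}_{\delta,s}(\Sigma_\varepsilon\times\R)$ by hypothesis and the odd part of $Uv_\star'$ contributes nothing to $\int Uv_\star'\,dt$, the identity $0=\int U v_\star'\,dt=\int U_e v_\star'\,dt$ shows $U_e\in\mathcal{E}^{4,\alpha}_{\delta,s}(\Sigma_\varepsilon\times\R)$ as well. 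The same argument gives $f_o,f_e\in\mathcal{E}^{0,\alpha}_{\delta,s}(\Sigma_\varepsilon\times\R)$.

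At this stage Proposition \ref{propflat} applies directly: $U_o$ is the unique solution in $\mathcal{E}^{4,\alpha}_\delta$ of $\mathcal{L}_\varepsilon^2 U_o=f_o$, so
\[
\|U_o\|_{C^{4,\alpha}_\delta(\Sigma_\varepsilon\times\R)}\leq c\,\|f_o\|_{C^{0,\alpha}_\delta(\Sigma_\varepsilon\times\R)},
\]
and the analogous estimate holds for the even parts. There is no real obstacle beyond correctly tracking these parities; the only mildly delicate point is making sure that the orthogonality to $v_\star'$ survives the splitting into odd and even parts, which is exactly what the evenness of $v_\star'$ secures.
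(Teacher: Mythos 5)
Your proof is correct and follows essentially the same route as the paper: both arguments rest on the fact that $W''$ even and $v_{\star}$ odd make $\mathcal{L}_{\varepsilon}^{2}$ commute with the reflection $t\mapsto -t$, so that $\mathcal{L}_{\varepsilon}^{2}U_{o}=f_{o}$ and $\mathcal{L}_{\varepsilon}^{2}U_{e}=f_{e}$, after which the orthogonality to $v_{\star}'$ (automatic for the odd parts, inherited from $U$ and $f$ for the even parts) lets one identify $U_{o}=G_{\varepsilon}(f_{o})$ and $U_{e}=G_{\varepsilon}(f_{e})$ and invoke the estimate of Proposition \ref{propflat}. The paper phrases the parity step by explicitly reflecting and subtracting, while you phrase it as parity-preservation of the operator plus uniqueness of the odd/even decomposition, but these are the same argument.
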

\begin{proof}
We set, for any $(y,t)\in\Sigma_{\varepsilon}\times\R$, $\tilde{U}(y,t):=U(y,-t)$ and $\tilde{f}(y,t):=f(y,-t)$. Using that $W^{''}$ is even and $v_{\star}$ is odd, we can see that $\mathcal{L}^{2}_{\varepsilon}\tilde{U}=\tilde{f}$. Therefore, subtracting and multiplying by $1/2$, we get
\begin{eqnarray}\notag
\mathcal{L}^{2}_{\varepsilon}\bigg(\frac{U(y,t)-\tilde{U}(y,t)}{2}\bigg)=\frac{f(y,t)-\tilde{f}(y,t)}{2},
\end{eqnarray}
that is $\mathcal{L}^{2}_{\varepsilon}U_{o}(y,t)=f_{o}$. In addition,
\begin{eqnarray}\notag
\int_{-\infty}^{\infty}U_{o}(y,t)v_{\star}^{'}(t)dt=\int_{-\infty}^{\infty}f_{o}(y,t)v_{\star}^{'}(t)dt=0,
\end{eqnarray}
for any $y\in\Sigma_{\varepsilon}$, hence $U_{o}=G_{\varepsilon}(f_{o})$, so in particular the first estimate holds true. The second one can be proved by a similar argument.
\end{proof}
Now we prove Proposition \ref{propflat}, with the aid of some Lemmas and Remarks. 

First we consider the spectral decomposition of $\mathcal{L}_{\varepsilon}$. We will denote by $(\lambda_{j},\phi_{j})_{j\geq 0}$ the eingendata of $-\Delta_{\Sigma}$. We observe that $\lambda_{0}=0$, $\lambda_{j}\geq\lambda_{1}>0$, $\phi_{0}$ is constant and, without loss of generality, we can assume that $||\phi_{j}||_{L^{2}(\Sigma)}=1$ (see \cite{PR}). Similarly, we will denote by $\{\mu_{k}\}_{k\geq 0}$ the eigenvalues of $L_{\star}=-\partial_{tt}+W^{''}(v_{\star}(t))$. In \cite{M}, M\"{u}ller proved that $\mu_{0}=0$, and the corresponding eigenspace, that is the Kernel, is generated by $v_{\star}^{'}(t)$, while $\mu_{k}\geq\mu_{1}>0$ (see also \cite{MW}). 
\begin{remark}
The eigenvalues of $\mathcal{L}_{\varepsilon}$ are $\{\mu_{k}+\varepsilon^{2}\lambda_{j}\}_{j,k\geq 0}$, thus al non-zero eigenvalues are positive and bounded away from $0$, indeed $\mu_{k}+\varepsilon^{2}\lambda_{j}\geq\varepsilon^{2}\lambda_{1}>0$.
\label{rem_eigenvalues}
\end{remark}
\begin{lemma}
Let 
\begin{eqnarray}\notag
\mathcal{L}_{\varepsilon}:H^{1}(\Sigma_{\varepsilon}\times\R)\to H^{-1}(\Sigma_{\varepsilon}\times\R)
\end{eqnarray}
be defined  by the duality relation
\begin{eqnarray}\notag
\bigg<\mathcal{L}_{\varepsilon}U_{1},U_{2}\bigg>=\int_{\Sigma_{\varepsilon}\times\R}\bigg\{(\nabla_{\Sigma_{\varepsilon}}U_{1},\nabla
_{\Sigma_{\varepsilon}}U_{2})+\partial_{t}U_{1}\partial_{t}U_{2}+W^{''}(v_{\star}(t))U_{1}U_{2}\bigg\}d\sigma(y)dt,
\end{eqnarray}
for any $U_{1},U_{2}\in C^{k,\alpha}_{\delta}(\Sigma_{\varepsilon}\times\R)$. Then 
\begin{eqnarray}\notag
Ker(\mathcal{L}_{\varepsilon})=span(v_{\star}^{'}(t)).
\end{eqnarray}
\label{lemmaker}
\end{lemma}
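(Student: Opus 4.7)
The plan is to exploit the product structure $\Sigma_{\varepsilon}\times\R$ and diagonalize $\mathcal{L}_{\varepsilon}$ simultaneously along the two factors. Since $\Sigma_{\varepsilon}$ is compact, the eigenfunctions of $-\Delta_{\Sigma_{\varepsilon}}$ form a complete $L^{2}(\Sigma_{\varepsilon})$-orthonormal system $\{\phi_{j}\}_{j\geq 0}$ with eigenvalues $\varepsilon^{2}\lambda_{j}$, where $\lambda_{0}=0$ and $\phi_{0}$ is a (nonzero) constant. For $U\in Ker(\mathcal{L}_{\varepsilon})\subset H^{1}(\Sigma_{\varepsilon}\times\R)$ I would expand
\begin{equation*}
U(y,t)=\sum_{j\geq 0}U_{j}(t)\phi_{j}(y),\qquad U_{j}(t):=\int_{\Sigma_{\varepsilon}}U(y,t)\phi_{j}(y)\,d\sigma(y),
\end{equation*}
the series converging in $L^{2}$ by Parseval and with each coefficient $U_{j}\in H^{1}(\R)$.

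The next step is to decouple the equation. Testing the weak identity $\langle\mathcal{L}_{\varepsilon}U,V\rangle=0$ against $V=\phi_{k}(y)\eta(t)$ with arbitrary $\eta\in C^{\infty}_{c}(\R)$, performing the surface integration by parts against $-\Delta_{\Sigma_{\varepsilon}}\phi_{k}=\varepsilon^{2}\lambda_{k}\phi_{k}$, and using that $W^{''}(v_{\star}(t))$ is a function of $t$ only, one extracts for every $k\geq 0$ the weak ODE
\begin{equation*}
L_{\star}U_{k}+\varepsilon^{2}\lambda_{k}U_{k}=0\quad\text{in }\mathcal{D}^{'}(\R),
\end{equation*}
which by elliptic regularity upgrades to a classical ODE with $U_{k}\in H^{2}(\R)$.

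Then I would analyze these ODEs mode by mode. For $k=0$ we have $L_{\star}U_{0}=0$; by the result of M\"{u}ller recalled before Remark \ref{rem_eigenvalues}, the $L^{2}(\R)$-kernel of $L_{\star}$ equals $\mathrm{span}(v_{\star}^{'})$, so $U_{0}=c\,v_{\star}^{'}$ for some $c\in\R$. For $k\geq 1$, pairing the ODE with $U_{k}$ in $L^{2}(\R)$ gives
\begin{equation*}
\int_{\R}\big[(U_{k}^{'})^{2}+W^{''}(v_{\star})U_{k}^{2}\big]dt=-\varepsilon^{2}\lambda_{k}\|U_{k}\|_{L^{2}(\R)}^{2};
\end{equation*}
the left side is non-negative since the spectrum of $L_{\star}$ is contained in $[0,\infty)$, while the right side is non-positive because $\varepsilon^{2}\lambda_{k}>0$, forcing $U_{k}\equiv 0$. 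Since $\phi_{0}$ is a nonzero constant, this yields $U(y,t)=(c\phi_{0})v_{\star}^{'}(t)$, proving $Ker(\mathcal{L}_{\varepsilon})\subset\mathrm{span}(v_{\star}^{'})$. The reverse inclusion is immediate: $v_{\star}^{'}$ does not depend on $y$, so $\Delta_{\Sigma_{\varepsilon}}v_{\star}^{'}=0$, and differentiating the ODE $-v_{\star}^{''}+W^{'}(v_{\star})=0$ gives $L_{\star}v_{\star}^{'}=0$.

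The only real obstacle I expect is the rigorous bookkeeping of the spectral expansion in $H^{1}(\Sigma_{\varepsilon}\times\R)$ and the term-by-term decoupling of the weak equation. This is however standard Hilbert-space manipulation: density of finite sums $\sum_{k\leq N}\eta_{k}(t)\phi_{k}(y)$ in $H^{1}(\Sigma_{\varepsilon}\times\R)$ together with Parseval identities controls the coefficients $U_{k}$ in $H^{1}(\R)$, and compactness of $\Sigma_{\varepsilon}$ prevents any loss in the process.
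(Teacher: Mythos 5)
Your proposal is correct and follows essentially the same route as the paper: expand $U$ in the eigenfunctions of $-\Delta_{\Sigma_{\varepsilon}}$, decouple into the ODEs $L_{\star}U_{j}+\varepsilon^{2}\lambda_{j}U_{j}=0$, and conclude via the spectrum of $L_{\star}$ (M\"{u}ller's result) that $U_{0}=cv_{\star}^{'}$ and $U_{j}\equiv 0$ for $j\geq 1$. Your write-up merely adds details the paper leaves implicit (the weak-form decoupling, the quadratic-form argument for $j\geq 1$, and the reverse inclusion), so no further comment is needed.
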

\begin{proof}
It is possible to see that $(\lambda_{\varepsilon,j},\phi_{\varepsilon,j})_{j\geq 0}:=(\varepsilon^{2}\lambda_{j},\varepsilon^{2}\phi_{j}(\varepsilon y))_{j\geq 0}$ are eigendata of $\Sigma_{\varepsilon}$ and $\phi_{\varepsilon,j}$ are orthonormal in $L^{2}(\Sigma_{\varepsilon})$. Any function $w\in H^{1}(\Sigma_{\varepsilon}\times\R)$ can be expanded in Fourier series as follows
\begin{eqnarray}\notag
U(y,t)=\sum_{j\geq 0}U_{j}(t)\phi_{\varepsilon,j}(y)
\end{eqnarray}
where
\begin{eqnarray}\notag
U_{j}(t)=\int_{\Sigma_{\varepsilon}}U(y,t)\phi_{\varepsilon,j}(y)d\sigma(y).
\end{eqnarray}
If $\mathcal{L}_{\varepsilon}w=0$, applying the operator to each term in the series, we get 
\begin{eqnarray}\notag
-\partial_{tt}U_{j}(t)+\lambda_{\varepsilon,j}U_{j}(t)+
W^{''}\big(v_{\star}(t))U_{j}(t)=0
\end{eqnarray}
for any $j\geq 0$, so $U_{0}(t)=cv_{\star}^{'}(t)$ and $w_{j}=0$ for $j\geq 1$. 
\end{proof}
Let 
\begin{eqnarray}\notag
\mathcal{O}:=\bigg\{U\in H^{1}(\Sigma_{\varepsilon}\times\R):\int_{\Sigma_{\varepsilon}\times\R}U(y,t)v_{\star}^{'}(t)d\sigma(y)dt=0\bigg\}.
\end{eqnarray}
be the orthogonal to $v_{\star}^{'}(t)$ in $H^{1}(\Sigma_{\varepsilon}\times\R)$.
\begin{lemma}
For any $f\in L^{2}(\Sigma_{\varepsilon}\times\R)$ satifying
\begin{eqnarray}\notag
\int_{-\infty}^{\infty} f(y,t)v_{\star}^{'}(t)dt=0 &\text{for any $y\in\Sigma_{\varepsilon}$},
\label{ortfally}
\end{eqnarray}
there exists a unique $U\in H^{1}(\Sigma_{\varepsilon}\times\R)$ such that 
\begin{eqnarray}\notag
\begin{cases}
\mathcal{L}_{\varepsilon}U=f\\\notag
\int_{-\infty}^{\infty}U(y,t)v_{\star}^{'}(t)dt=0 &\text{for any $y\in\Sigma_{\varepsilon}$.}
\end{cases}
\end{eqnarray}
\end{lemma}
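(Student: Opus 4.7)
The plan is to set up a variational formulation and apply Lax--Milgram on a suitable closed subspace of $H^{1}(\Sigma_{\varepsilon}\times\R)$. Let $\widetilde{\mathcal{O}}$ denote the subspace of $U\in H^{1}(\Sigma_{\varepsilon}\times\R)$ satisfying the fiberwise orthogonality $\int_{\R} U(y,t)v_{\star}^{'}(t)\,dt=0$ for a.e.\ $y\in\Sigma_{\varepsilon}$; it is closed, being the preimage of $0$ under the continuous map $U\mapsto\int_{\R}U(\cdot,t)v_{\star}^{'}(t)\,dt$ from $H^{1}(\Sigma_{\varepsilon}\times\R)$ to $L^{2}(\Sigma_{\varepsilon})$ (Cauchy--Schwarz plus the exponential decay of $v_{\star}^{'}$). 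On $\widetilde{\mathcal{O}}$ I consider the symmetric bilinear form $B(U,V):=\langle\mathcal{L}_{\varepsilon}U,V\rangle$ and the functional $V\mapsto\int f V\,d\sigma\,dt$, both continuous on $H^{1}$ thanks to $W''(v_{\star})\in L^{\infty}$ and $f\in L^{2}$.

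The heart of the argument is coercivity. Expand $U\in\widetilde{\mathcal{O}}$ in the Laplace--Beltrami eigenbasis $\{\phi_{\varepsilon,j}\}_{j}$ of $\Sigma_{\varepsilon}$ as $U(y,t)=\sum_{j\geq 0}U_{j}(t)\phi_{\varepsilon,j}(y)$; the constraint $U\in\widetilde{\mathcal{O}}$ becomes $\int_{\R}U_{j}v_{\star}^{'}\,dt=0$ for every $j$, so each $U_{j}$ lies in $\mathrm{Ker}(L_{\star})^{\perp}$ in $L^{2}(\R)$. By orthonormality of the $\phi_{\varepsilon,j}$ the form decouples across modes,
\begin{eqnarray}\notag
B(U,U)=\sum_{j\geq 0}\int_{\R}\bigl[(U_{j}^{'})^{2}+(W''(v_{\star})+\lambda_{\varepsilon,j})U_{j}^{2}\bigr]\,dt
\geq\sum_{j\geq 0}(\mu_{1}+\lambda_{\varepsilon,j})\|U_{j}\|_{L^{2}(\R)}^{2}\geq\mu_{1}\|U\|_{L^{2}}^{2},
\end{eqnarray}
where M\"{u}ller's spectral gap $\mu_{1}>0$ of $L_{\star}$ on $\mathrm{Ker}(L_{\star})^{\perp}$ has been used. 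Writing $B(U,U)=\|\nabla U\|_{L^{2}}^{2}+\int W''(v_{\star})U^{2}\geq\|\nabla U\|_{L^{2}}^{2}-\|W''(v_{\star})\|_{\infty}\|U\|_{L^{2}}^{2}$ and combining with the $L^{2}$ bound yields $B(U,U)\geq c\|U\|_{H^{1}}^{2}$ for a constant $c>0$ \emph{independent of} $\varepsilon$. Lax--Milgram then produces a unique $U\in\widetilde{\mathcal{O}}$ with $B(U,V)=\int f V\,d\sigma\,dt$ for every $V\in\widetilde{\mathcal{O}}$.

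To upgrade the identity to all $V\in H^{1}(\Sigma_{\varepsilon}\times\R)$, I would test against $V=v_{\star}^{'}(t)h(y)$ with $h\in H^{1}(\Sigma_{\varepsilon})$, since together with $\widetilde{\mathcal{O}}$ these span $H^{1}$. The hypothesis $\int_{\R}f v_{\star}^{'}\,dt=0$ and Fubini give $\int fV\,d\sigma\,dt=0$. Differentiating the ODE $-v_{\star}^{''}+W'(v_{\star})=0$ shows $L_{\star}v_{\star}^{'}=0$; integration by parts in $t$ then yields
\begin{eqnarray}\notag
B(U,v_{\star}^{'}h)=\int_{\Sigma_{\varepsilon}}\nabla_{\Sigma_{\varepsilon}}h(y)\cdot\nabla_{\Sigma_{\varepsilon}}\!\!\left[\int_{\R}U(y,t)v_{\star}^{'}(t)\,dt\right]d\sigma(y)=0,
\end{eqnarray}
the bracketed quantity vanishing identically because $U\in\widetilde{\mathcal{O}}$. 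Hence $\mathcal{L}_{\varepsilon}U=f$ in $H^{-1}$, and uniqueness in $\widetilde{\mathcal{O}}$ is immediate from the coercivity applied to the difference of two candidate solutions.

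The principal obstacle is the uniform-in-$\varepsilon$ coercivity: it depends on M\"{u}ller's spectral gap $\mu_{1}>0$ and, crucially, on the \emph{fiberwise} orthogonality defining $\widetilde{\mathcal{O}}$, which forces every Fourier mode $U_{j}$ into $\mathrm{Ker}(L_{\star})^{\perp}$. The weaker, globally-integrated orthogonality defining $\mathcal{O}$ would leave functions of the form $v_{\star}^{'}(t)h(y)$ with $\int_{\Sigma_{\varepsilon}}h\,d\sigma=0$ in the test space, and the coercivity constant would then be controlled only by the first nontrivial eigenvalue $\varepsilon^{2}\lambda_{1}$ of $-\Delta_{\Sigma_{\varepsilon}}$, which degenerates as $\varepsilon\to 0$.
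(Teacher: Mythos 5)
Your proof is correct, but it takes a genuinely different route from the paper's. The paper works in the larger space $\mathcal{O}$ of functions orthogonal to $v_{\star}^{'}$ only in the integrated sense over $\Sigma_{\varepsilon}\times\R$: there the quadratic form of $\mathcal{L}_{\varepsilon}$ is bounded below only by the first nonzero eigenvalue $\varepsilon^{2}\lambda_{1}$ of the spectral decomposition (Remark \ref{rem_eigenvalues}), so the norm-equivalence and coercivity constants are allowed to degenerate in $\varepsilon$ (the paper's choice $\lambda<\varepsilon^{2}\lambda_{1}/(c+\varepsilon^{2}\lambda_{1})$ makes this explicit); existence and uniqueness in $\mathcal{O}$ then follow from the Riesz representation theorem, and the fiberwise orthogonality of the solution is not re-derived but imported from Proposition $8.4$ of \cite{PR}. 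You instead impose the fiberwise constraint from the start, decouple the form over the Laplace--Beltrami modes of $\Sigma_{\varepsilon}$, and invoke M\"{u}ller's gap $\mu_{1}>0$ of $L_{\star}$ on the orthogonal complement of $v_{\star}^{'}$, which buys a coercivity constant independent of $\varepsilon$; the price is that Lax--Milgram only yields the equation against test functions in $\widetilde{\mathcal{O}}$, and you must (and correctly do) recover the full weak equation by testing against $v_{\star}^{'}(t)h(y)$, using $L_{\star}v_{\star}^{'}=0$, the vanishing of $\nabla_{\Sigma_{\varepsilon}}\int_{\R}Uv_{\star}^{'}dt$, and the hypothesis $\int_{\R}fv_{\star}^{'}dt=0$ --- this step is essentially the content the paper delegates to \cite{PR}. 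Both arguments are sound; yours is self-contained and quantitatively sharper in $\varepsilon$ at the $H^{1}$ level, while the paper's is shorter because the fiberwise statement is outsourced, and your closing remark about the degeneration of the constant on $\mathcal{O}$ is precisely what happens in the paper's own estimate.
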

\begin{proof}
At first we observe that 
\begin{eqnarray}
||U||=\int_{\Sigma_{\varepsilon}\times\R}|\nabla_{\Sigma_{\varepsilon}}U(y,z)|^{2}+(\partial_{tt}U(y,t))^{2}+W^{''}(v_{\star}^{'}(z))U^{2}(y,z)d\sigma(y)dt
\end{eqnarray}
is an equivalent norm on $\mathcal{O}$, that is, for any $U\in X$, we have
\begin{eqnarray}\notag
c_{\varepsilon,1}||U||_{H^{1}(\Sigma_{\varepsilon}\times\R)}\leq||U||\leq c_{\varepsilon,2}||U||_{H^{1}(\Sigma_{\varepsilon}\times\R)},
\end{eqnarray}
for some constants $c_{\varepsilon,1},c_{\varepsilon,2}>0$. In fact, by the spectral decomposition of $\mathcal{L}_{\varepsilon}$, (see Remark \ref{rem_eigenvalues}),
\begin{eqnarray}\notag
\int_{\Sigma_{\varepsilon}\times\R}\mathcal{L}_{\varepsilon}UUd\sigma(y)dt\geq\varepsilon^{2}\lambda_{1}
\int_{\Sigma_{\varepsilon}\times\R}U^{2}d\sigma(y)dt.
\end{eqnarray} 
Since $W^{''}(v_{\star}(t))$ is bounded, a pointwise estimate yields that
\begin{eqnarray}
\int_{\Sigma_{\varepsilon}\times\R}\mathcal{L}_{\varepsilon}UUd\sigma(y)dt\geq\int_{\Sigma_{\varepsilon}\times\R}
|\nabla_{\Sigma_{\varepsilon}}U|^{2}+(\partial_{tt}U)^{2}d\sigma(y)dt-c\int_{\Sigma_{\varepsilon}\times\R}U^{2}d\sigma(y)dt,
\end{eqnarray}
for some constant $c>0$. Now we point out that, for any $0<\lambda<1$, we have
\begin{eqnarray}\notag
\int_{\Sigma_{\varepsilon}\times\R}\mathcal{L}_{\varepsilon}UUd\sigma(y)dt=\lambda\int_{\Sigma_{\varepsilon}\times\R}
\mathcal{L}_{\varepsilon}UUd\sigma(y)dt+(1-\lambda)\int_{\Sigma_{\varepsilon}\times\R}\mathcal{L}_{\varepsilon}
UUd\sigma(y)dt\geq\\\notag
\lambda\bigg(\int_{\Sigma_{\varepsilon}\times\R}|\nabla_{\Sigma_{\varepsilon}}U|^{2}+(\partial_{tt}U)^{2}
d\sigma(y)dt-c\int_{\Sigma_{\varepsilon}\times\R}U^{2}d\sigma(y)dt\bigg)+
(1-\lambda)\varepsilon^{2}\lambda_{1}\int_{\Sigma_{\varepsilon}\times\R}U^{2}d\sigma(y)dt,
\end{eqnarray}
so, in order to prove the lower bound, it is enough to choose $\lambda<\varepsilon^{2}\lambda_{1}/(c+\varepsilon^{2}\lambda_{1})$. As a consequence, by the Riesz representation theorem, for any $f\in L^{2}(\Sigma_{\varepsilon}\times\R)$ such that 
\begin{eqnarray}
\int_{\Sigma_{\varepsilon}\times\R}f(y,t)v_{\star}^{'}(t)d\sigma(y)dt=0,
\label{condfort}
\end{eqnarray}
the equation $\mathcal{L}_{\varepsilon}U=f$ admits a unique solution $U\in\mathcal{O}$. We observe that orthogonality condition (\ref{condfort}) is necessary for solvability, since 
\begin{eqnarray}\notag
\int_{\Sigma_{\varepsilon}\times\R}f(y,t)v_{\star}^{'}(t)d\sigma(y)dt=\int_{\Sigma_{\varepsilon}\times\R}
\mathcal{L}_{\varepsilon}U(y,t)v_{\star}^{'}(t)d\sigma(y)dt=\\\notag
\int_{\Sigma_{\varepsilon}\times\R}
U(y,t)\mathcal{L}_{\varepsilon}v_{\star}^{'}(t)d\sigma(y)dt=0.
\end{eqnarray}
If in particular $f$ satisfies (\ref{ortfally}), then, by proposition $8,4$ of \cite{PR}, also $w$ satisfies (\ref{ortfally}).
\end{proof}
Now we are ready to conclude the proof of Proposition \ref{propflat}.
\begin{proof}
There are two more steps. As first we need some regularity theory to estimate the $C^{2,\alpha}_{\delta}(\Sigma_{\varepsilon}\times\R)$ norm of the solution $U$ if $f\in\mathcal{E}^{0,\alpha}_{\delta}(\Sigma_{\varepsilon}\times\R)$, then we have to iterate the estimates to deal with the operator $\mathcal{L}_{\varepsilon}^{2}$. For the first step, see Proposition $8,3$ of \cite{PR}. As regards the second one, we argue as follows.

If $f\in\mathcal{E}^{0,\alpha}_{\delta}(\Sigma_{\varepsilon}\times\R)$, the above discussion yields that we can find  $\tilde{U}\in\mathcal{E}^{2,\alpha}_{\delta}(\Sigma_{\varepsilon}\times\R)$ such that 
\begin{eqnarray}
\begin{cases}
\mathcal{L}_{\varepsilon}\tilde{U}=f\\\notag
||\tilde{U}||_{C^{2,\alpha}_{\delta}(\Sigma_{\varepsilon}\times\R)}\leq C||f||_{C^{0,\alpha}_{\delta}(\Sigma_{\varepsilon}\times\R)},
\end{cases}
\label{estPR}
\end{eqnarray}
for some constant $C>0$ independent of $\varepsilon$. Now, by the same argument, we can find $U\in\mathcal{E}^{2,\alpha}_{\delta}(\Sigma_{\varepsilon}\times\R)$ satisfying
\begin{eqnarray}
\begin{cases}
\mathcal{L}_{\varepsilon}U=\tilde{U}\\\notag
||U||_{C^{2,\alpha}_{\delta}(\Sigma_{\varepsilon}\times\R)}\leq C||\tilde{U}||_{C^{0,\alpha}_{\delta}(\Sigma_{\varepsilon}\times\R)}\leq
C||f||_{C^{0,\alpha}_{\delta}(\Sigma_{\varepsilon}\times\R)},
\end{cases}
\label{eqboot}
\end{eqnarray}
for some constant $C>0$ independent of $\varepsilon$. To conclude the proof, we have to show that $U\in C^{4,\alpha}_{\delta}(\Sigma_{\varepsilon}\times\R)$ and 
\begin{eqnarray}
||U||_{C^{4,\alpha}_{\delta}(\Sigma_{\varepsilon}\times\R)}\leq C||f||_{C^{0,\alpha}_{\delta}(\Sigma_{\varepsilon}\times\R)}.
\label{relnorm4}
\end{eqnarray}
In order to do so we apply a bootstrap argument. We differentiate (\ref{eqboot}) with respect to $y_{j}$ and we get 
\begin{eqnarray}\notag
\mathcal{L}_{\varepsilon}U_{j}=\tilde{U}_{j}.
\end{eqnarray}
By (\ref{estPR}), we get that $U_{j}\in C^{2,\alpha}_{\delta}(\Sigma_{\varepsilon}\times\R)$ and
\begin{eqnarray}\notag
||U_{j}||_{C^{2,\alpha}_{\delta}(\Sigma_{\varepsilon}\times\R)}\leq C||\tilde{U}_{j}||_{C^{0,\alpha}_{\delta}(\Sigma_{\varepsilon}\times\R)}\leq C||\tilde{U}||_{C^{2,\alpha}_{\delta}(\Sigma_{\varepsilon}\times\R)}\leq C||f||_{C^{0,\alpha}_{\delta}(\Sigma_{\varepsilon}\times\R)}.
\end{eqnarray}
In the same way, taking the derivative with respect to $t$, we get 
\begin{eqnarray}\notag
\mathcal{L}_{\varepsilon}U_{t}=\tilde{U}_{t}-\frac{1}{\varepsilon}W^{'''}
(v_{\star}(t))v_{\star}^{'}(t)U.
\end{eqnarray}
Exactly as before, we have
\begin{eqnarray}\notag
||U_{t}||_{C^{2,\alpha}_{\delta}(\Sigma_{\varepsilon}\times\R)}\leq C(||\tilde{U}_{t}||_{C^{0,\alpha}_{\delta}(\Sigma_{\varepsilon}\times\R)}+||W^{'''}
(v_{\star}(z))v_{\star}^{'}(t)U||
_{C^{0,\alpha}_{\delta}(\Sigma_{\varepsilon}\times\R)})\leq\\\notag
C(||\tilde{U}||_{C^{2,\alpha}_{\delta}(\Sigma_{\varepsilon}\times\R)}+
||U||_{C^{0,\alpha}_{\delta}(\Sigma_{\varepsilon}\times\R)})\leq C(||f||_{C^{0,\alpha}_{\delta}(\Sigma_{\varepsilon}\times\R)}+
||\tilde{U}||_{C^{2,\alpha}_{\delta}(\Sigma_{\varepsilon}\times\R)})\leq C||f||_{C^{0,\alpha}_{\delta}(\Sigma_{\varepsilon}\times\R)}.
\end{eqnarray}
Therefore we have
\begin{eqnarray}\notag
||\nabla^{3}(U\psi_{\delta})||_{\infty}\leq C||\nabla U||_{C^{2,\alpha}_{\delta}(\Sigma_{\varepsilon}\times\R)}\leq C||f||_{C^{0,\alpha}_{\varepsilon,\delta}(\Sigma\times\R)}.
\end{eqnarray}
Differentiating the equation once again, we get 
\begin{eqnarray}\notag
||\nabla^{4}(U\psi_{\delta})||_{\infty}+
[\nabla^{4}(U\psi_{\delta})]_{\alpha}\leq C||f||_{C^{0,\alpha}_{\delta}(\Sigma_{\varepsilon}\times\R)}.
\end{eqnarray}
In conclusion, we have (\ref{relnorm4}).
\end{proof}

\subsection{The proof of Proposition \ref{propaux_2}: solving equation (\ref{proj_prob}) by a fixed point argument}
Equation (\ref{proj_prob}) is equivalent to the fixed point problem
\begin{eqnarray}\notag
U=T_{2}(U):=G_{\varepsilon}\bigg\{-\chi_{4}F(\tilde{v}_{\varepsilon,\phi})-\text{T}(U,V_{\varepsilon,\phi,U},\phi)+p(y)v_{\star}^{'}(t)\bigg\}.
\end{eqnarray}
Once again, we will solve it by showing that $T_{2}$ is a contraction on the ball
\begin{eqnarray}\notag
\Lambda_{2}:=\{U\in\mathcal{E}^{4,\alpha}_{\delta,s}(\Sigma_{\varepsilon}\times\R):||U||_{C^{4,\alpha}_{\delta}(\Sigma_{\varepsilon}\times\R)}\leq C_{2}\varepsilon^{3}\},
\end{eqnarray}
provided $C_{2}>0$ is large enough. First we observe that, by definition of $p$, the right hand side is orthogonal to $v_{\star}^{'}(t)$ for any $y\in\Sigma_{\varepsilon}$, thus we can actually apply the operator $G_{\varepsilon}$. Moreover, if $U$ respects the symmetries of the Torus, then also the right-hand side does, thus, applying $G_{\varepsilon}$, we get once again something that respects these symmetries. Now we show that, if $||U||_{C^{4,\alpha}_{\delta}(\Sigma_{\varepsilon}\times\R)}\leq C_{2}\varepsilon^{2}$, then also $T_{2}(U)$ satisfies the same upper bound, for some large constant $C_{2}$.

We note that
\begin{eqnarray}\notag
||\chi_{4}F(\tilde{v}_{\varepsilon,\phi})||_{C^{0,\alpha}_{\delta}(\Sigma_{\varepsilon}\times\R)}\leq c\varepsilon^{3},
\end{eqnarray}
for some constant $\tilde{c}$ depending just on $W,\tau$ and the geometric quantities of $\Sigma$, and the same is true for $p(y)v_{\star}^{'}(t)$. The other terms are smaller, for instance, using (\ref{quadratic_w}) and the fact that $V$ is exponentially small,
\begin{eqnarray}\notag
||\chi_{1}Q_{\varepsilon,\phi}(U+V)||_{C^{0,\alpha}_{\delta}(\Sigma_{\varepsilon}\times\R)}\leq c\varepsilon^{6}.
\end{eqnarray}
Similarly, we can see that $||\text{M}_{\varepsilon,\phi}(V)||_{C^{0,\alpha}_{\delta}(\Sigma_{\varepsilon}\times\R)}\leq ce^{-a/\varepsilon}$. In addition, since all the coefficients of $\text{R}_{\varepsilon,\phi}$ are at least of order $\varepsilon$, we get that
\begin{eqnarray}\notag
||\chi_{4}\text{R}_{\varepsilon,\phi}(U)||_{C^{0,\alpha}_{\delta}(\Sigma_{\varepsilon}\times\R)}\leq c||U||_{C^{4,\alpha}_{\delta}(\Sigma_{\varepsilon}\times\R)}\leq c\varepsilon^{4}.
\end{eqnarray}
As regards the Lipschitz dependence on $U$, we observe that 
\begin{eqnarray}\notag
||\chi_{1}(Q_{\varepsilon,\phi}(U_{1}+V)-Q_{\varepsilon,\phi}(U_{2}+V))||_{C^{0,\alpha}_{\delta}(\Sigma_{\varepsilon}\times\R)}\leq c\varepsilon^{3}||U_{1}-U_{2}||_{C^{4,\alpha}_{\delta}(\Sigma_{\varepsilon}\times\R)}
\end{eqnarray}
and
\begin{eqnarray}\notag
||\chi_{4}(\text{R}_{\varepsilon,\phi}(U_{1})-\text{R}_{\varepsilon,\phi}(U_{2}))||_{C^{0,\alpha}_{\delta}(\Sigma_{\varepsilon}\times\R)}\leq c\varepsilon||U_{1}-U_{2}||_{C^{4,\alpha}_{\delta}(\Sigma_{\varepsilon}\times\R)}.
\end{eqnarray}
\textit{Estimate of the odd part of the solution $U_{\varepsilon,\phi}$.}\\

Up to now we have proved the existence of a solution $U_{\varepsilon,\phi}$ to equation (\ref{proj_prob}) satisfying $||U_{\varepsilon,\phi}||_{C^{4,\alpha}_{\delta}(\Sigma_{\varepsilon}\times\R)}\leq c\varepsilon^{3}$. However, we point out that the only terms of order $\varepsilon^{3}$ in the right-hand side come from $\chi_{4}F(\tilde{v}_{\varepsilon,\phi})$. In fact, as we observed above, $||\text{T}(U,V_{\varepsilon,\phi,U},\phi)||_{C^{0,\alpha}_{\delta}(\Sigma_{\varepsilon}\times\R)}\leq c\varepsilon^{4}$, so in particular the same is true for
\begin{eqnarray}\notag
\frac{1}{c_{\star}}\bigg(\int_{-\infty}^{\infty}\text{T}(U,V_{\varepsilon,\phi,U},\phi)v_{\star}^{'}(t)dt\bigg)v_{\star}^{'}(t).
\end{eqnarray}
Moreover, by Proposition \ref{solve_eq_bifo}, 
\begin{eqnarray}\notag
\int_{-\infty}^{\infty}\chi_{4}F(\tilde{v}_{\varepsilon,\phi})(y,t)v_{\star}^{'}(t)dt=\int_{-\infty}^{\infty}F(\tilde{v}_{\varepsilon,\phi})(y,t)v_{\star}^{'}(t)dt
+\int_{-\infty}^{\infty}(\chi_{4}-1)F(\tilde{v}_{\varepsilon,\phi})(y,t)v_{\star}^{'}(t)dt
\end{eqnarray}
is of order $\varepsilon^{4}$, since the second term is exponentially small. Going back to Section $5$, it is possible to see that the only terms of order $\varepsilon^{3}$ in $F(\tilde{v}_{\varepsilon,\phi})$ are even in $t$, thus the odd part of the right-hand side is of order $\varepsilon^{4}$, and therefore, by Lemma \ref{Lemma_oe}, the same is true for $U_{\varepsilon,\phi}$, namely $||U_{\varepsilon,\phi}||_{C^{4,\alpha}_{\delta}(\Sigma_{\varepsilon}\times\R)}\leq c\varepsilon^{4}$.\\

\textit{Lipschitz dependence on $\phi$.}\\

Let us fix $\phi_{1},\phi_{2}\in\Lambda_{2}$. To simplify the notation, we set, for $k=1,2$, $\tilde{V}_{k}:=\tilde{v}_{\varepsilon,\phi_{k}}$, $U_{k}:=U_{\varepsilon,\phi_{k}}$, $V_{k}:=V_{\varepsilon,\phi_{k},U_{k}}$ and so on. In this proof, $\varepsilon$ will always be small but fixed, and we will be interested in the dependence on $\phi$. 

First we note that
\begin{eqnarray}\notag
\chi_{4}(F(\tilde{v}_{1})-F(\tilde{v}_{2}))=F(\tilde{v}_{1})-F(\tilde{v}_{2})+(\chi_{4}-1)(F(\tilde{v}_{1})-F(\tilde{v}_{2}))
\end{eqnarray}
The first term satisfies
\begin{eqnarray}\notag
||F(\tilde{v}_{1})-F(\tilde{v}_{2})||_{C^{4,\alpha}_{\delta}(\Sigma_{\varepsilon}\times\R)}\leq c\varepsilon^{3}|\phi_{1}-\phi_{2}|_{C^{4,\alpha}(\Sigma)},
\end{eqnarray}
because, for instance, 
\begin{eqnarray}\notag
|\varepsilon^{2}(|\nabla_{\Sigma}\phi_{1}|^{2}-|\nabla_{\Sigma}\phi_{2}|^{2})v_{\star}^{(4)}|\leq c\varepsilon^{2}(|\nabla_{\Sigma}\phi_{1}|+|\nabla_{\Sigma}\phi_{2}|)|\phi_{1}-\phi_{2}|_{C^{4,\alpha}(\Sigma)}\leq c\varepsilon^{3}|\phi_{1}-\phi_{2}|_{C^{4,\alpha}(\Sigma)}.
\end{eqnarray}
The other terms are similar, or even easier to treat because there is already an $\varepsilon^{3}$ that multiplies everything (see section 
$5,1$). The Lipschitz constant of the second term is exponentially small in $\varepsilon$, namely 
\begin{eqnarray}\notag
||(\chi_{4}-1)(F(\tilde{v}_{1})-F(\tilde{v}_{2}))||_{C^{4,\alpha}_{\delta}(\Sigma_{\varepsilon}\times\R)}\leq ce^{-a/\varepsilon}|\phi_{1}-\phi_{2}|_{C^{4,\alpha}(\Sigma)}.
\end{eqnarray}
Using the Lipschitz dependence of $V$ on the data proved in Proposition \ref{propaux_1} and the definitions of $\text{M}_{\varepsilon,\phi}$, $Q_{\varepsilon,\phi}$ and $\text{R}_{\varepsilon,\phi}$, it is possible to see that
\begin{eqnarray}\notag
||\text{M}_{1}(V_{1})-\text{M}_{2}(V_{2})||_{C^{4,\alpha}_{\delta}(\Sigma_{\varepsilon}\times\R)}\leq ce^{-a/\varepsilon}(||U_{1}-U_{2}||_{C^{4,\alpha}_{\delta}(\Sigma_{\varepsilon}\times\R)}+|\phi_{1}-\phi_{2}|_{C^{4,\alpha}(\Sigma)}),\\\notag
||\chi_{4}(Q_{1}(U_{1}+V_{1})-Q_{2}(U_{2}+V_{2}))||_{C^{4,\alpha}_{\delta}(\Sigma_{\varepsilon}\times\R)}\leq c\varepsilon^{3}(||U_{1}-U_{2}||_{C^{4,\alpha}_{\delta}(\Sigma_{\varepsilon}\times\R)}+|\phi_{1}-\phi_{2}|_{C^{4,\alpha}(\Sigma)}),\\\notag
||\chi_{4}(\text{R}_{1}(U_{1})-\text{R}_{2}(U_{2}))||_{C^{4,\alpha}_{\delta}(\Sigma_{\varepsilon}\times\R)}\leq c\varepsilon||U_{1}-U_{2}||_{C^{4,\alpha}_{\delta}(\Sigma_{\varepsilon}\times\R)}+c\varepsilon^{4}|\phi_{1}-\phi_{2}|_{C^{4,\alpha}(\Sigma)}.
\end{eqnarray}
Now it remains to deal with $p(y)$, that also depends on $\varepsilon$ and $\phi$. We write, for any $y\in\Sigma_{\varepsilon}$ and $\phi\in B_{4}(\tau/4)$,
\begin{eqnarray}\notag
p(y)=\int_{-\infty}^{\infty}F(\tilde{v}_{\varepsilon,\phi})(y,t)v_{\star}^{'}(t)dt+\big\{p_{1}(\phi)(y)+p_{2}(\phi)(y)+p_{3}(\phi)(y)
+p_{4}(\phi)(y)\big\}v_{\star}^{'}(t),
\end{eqnarray}
where we have set
\begin{eqnarray}
p_{1}(\phi)(y):=\frac{1}{c_{\star}}\int_{-\infty}^{\infty}(1-\chi_{4})F(\tilde{v}_{\varepsilon,\phi})(y,t)v_{\star}^{'}(t)dt,\label{proj1}\\
p_{2}(\phi)(y):=\frac{1}{c_{\star}}\int_{-\infty}^{\infty}\chi_{1}Q_{\varepsilon,\phi}(U+V)(y,t)v_{\star}^{'}(t)dt,\label{proj2}\\
p_{3}(\phi)(y):=\frac{1}{c_{\star}}\int_{-\infty}^{\infty}\chi_{1}\text{M}_{\varepsilon,\phi}(V)(y,t)v_{\star}^{'}(t)dt\label{proj3}\\
p_{4}(\phi)(y):=\frac{1}{c_{\star}}\int_{-\infty}^{\infty}\chi_{4}\text{R}_{\varepsilon,\phi}(U_{\varepsilon,\phi})(y,t)v_{\star}^{'}(t)dt\label{proj_R}
\end{eqnarray}
and $U:=U_{\varepsilon,\phi}$, $V:=V_{\varepsilon,\phi,U}$. Since we want to deal with functions defined on $\Sigma$, we will set, for any $y\in\Sigma_{\varepsilon}$, $\tilde{p}_{i}(\phi)(\varepsilon y):=p_{i}(\phi)(y)$, for $i=1,\dots,4$. It follows from Proposition \ref{solve_eq_bifo} and that
\begin{eqnarray}
\bigg|\int_{-\infty}^{\infty}F(\tilde{v}_{\varepsilon,\phi_{1}})(y,t)v_{\star}^{'}(t)dt
-\int_{-\infty}^{\infty}F(\tilde{v}_{\varepsilon,\phi_{2}})(y,t)v_{\star}^{'}(t)dt\bigg|\leq c\varepsilon^{3}|\phi_{1}-\phi_{2}|_{C^{4,\alpha}(\Sigma)}.
\end{eqnarray}
In addition, by the previous discussion,
\begin{eqnarray}
\begin{cases}
|\tilde{p}_{1}(\phi)|_{C^{0,\alpha}(\Sigma)}\leq ce^{-a/\varepsilon}\\
|\tilde{p}_{1}(\phi_{1})-\tilde{p}_{1}(\phi_{2})|_{C^{0,\alpha}(\Sigma)}\leq ce^{-a/\varepsilon}|\phi_{1}-\phi_{2}|_{C^{4,\alpha}(\Sigma)}.
\end{cases}
\label{dec_proj1}
\end{eqnarray}
Furthermore, by the Lipschitz dependence of $V$ on the data, proved in Proposition \ref{propaux_1}, and by the fact that $||U||_{C^{4,\alpha}_{\delta}(\Sigma_{\varepsilon}\times\R)}\leq C_{2}\varepsilon^{3}$, we have
\begin{eqnarray}
\begin{cases}
|\tilde{p}_{2}(\phi)|_{C^{0,\alpha}(\Sigma)}\leq c\varepsilon^{6}\\
|\tilde{p}_{2}(\phi_{1})-\tilde{p}_{2}(\phi_{2})|_{C^{0,\alpha}(\Sigma)}\leq c\varepsilon^{3}(|\phi_{1}-\phi_{2}|_{C^{4,\alpha}(\Sigma)}+||U_{1}-U_{2}||_{C^{4,\alpha}_{\delta}(\Sigma_{\varepsilon}\times\R)}).
\end{cases}
\label{dec_proj2}
\end{eqnarray}
and, similarly
\begin{eqnarray}
\begin{cases}
|\tilde{p}_{3}(\phi)|_{C^{0,\alpha}(\Sigma)}\leq ce^{-a/\varepsilon}\\
|\tilde{p}_{3}(\phi_{1})-\tilde{p}_{3}(\phi_{2})|_{C^{0,\alpha}(\Sigma)}\leq ce^{-a/\varepsilon}(|\phi_{1}-\phi_{2}|_{C^{4,\alpha}(\Sigma)}+||U_{1}-U_{2}||_{C^{4,\alpha}_{\delta}(\Sigma_{\varepsilon}\times\R)}).
\end{cases}
\label{dec_proj3}
\end{eqnarray}
As regards $\tilde{p}_{4}$, we give a first, rough estimate that is enough to prove the Lipschitz dependence of $U$ on $\phi$. However, we will see later that this estimate is actually not enough to solve the bifurcation equation, thus we will improve it in Lemma \ref{lemmaR}, using the estimate of the odd part of $U$ (see section $7$). 
\begin{eqnarray}
\begin{cases}
|\tilde{p}_{4}(\phi)|_{C^{0,\alpha}(\Sigma)}\leq c\varepsilon^{4}\\
|\tilde{p}_{4}(\phi_{1})-\tilde{p}_{4}(\phi_{2})|_{C^{0,\alpha}(\Sigma)}\leq c\varepsilon||U_{1}-U_{2}||_{C^{4,\alpha}_{\delta}(\Sigma_{\varepsilon}\times\R)}.
\end{cases}
\label{dec_proj4}
\end{eqnarray}
In conclusion, the equation satisfied by the difference of the solutions $U_{1}-U_{2}$ is of the form
\begin{eqnarray}\notag
\mathcal{L}^{2}_{\varepsilon}(U_{1}-U_{2})=g(\phi_{1})(y,t)-g(\phi_{2})(y,t),
\end{eqnarray}
where $g(\phi_{i})$ and $U_{i}$ satisfy
\begin{eqnarray}\notag
\int_{-\infty}^{\infty}(g(\phi_{1})-g(\phi_{2}))(y,t)v_{\star}^{'}(t)dt=\int_{-\infty}^{\infty}(U_{1}-U_{2})(y,t)v_{\star}^{'}(t)dt=0,
\end{eqnarray}
thus, by Proposition \ref{propflat},
\begin{eqnarray}\notag
||U_{1}-U_{2}||_{C^{4,\alpha}_{\delta}(\Sigma_{\varepsilon}\times\R)}\leq c\varepsilon||U_{1}-U_{2}||_{C^{4,\alpha}_{\delta}(\Sigma_{\varepsilon}\times\R)}+c\varepsilon^{3}|\phi_{1}-\phi_{2}|_{C^{4,\alpha}(\Sigma)},
\end{eqnarray}
and hence, reabsorbing the first term of the right-hand side,
\begin{eqnarray}\notag
\frac{1}{2}||U_{1}-U_{2}||_{C^{4,\alpha}_{\delta}(\Sigma_{\varepsilon}\times\R)}\leq c\varepsilon^{3}|\phi_{1}-\phi_{2}|_{C^{4,\alpha}(\Sigma)}. 
\end{eqnarray}

\section{Solving the bifurcation equation} 
\subsection{The proof of Proposition \ref{prop_int_v}}
First let us fix some notation. For any $\phi\in C^{4,\alpha}(\Sigma)_{s}$ and $0<\varepsilon\leq 1$, 
$|\Sigma_{\varepsilon,\phi}|_{3}$ will be the volume of the interior of $\Sigma_{\varepsilon,\phi}$, that is its $3$-Lebesgue measure. Moreover, we set 
\begin{eqnarray}\notag
B_{1}:=\{x=Z_{\varepsilon}(y,t+\phi(\varepsilon y)):-5-\tau/2\varepsilon<t<0\}\\\notag
B_{2}:=\{x=Z_{\varepsilon}(y,z):0<t<5+\tau/2\varepsilon\},
\end{eqnarray}
$V_{i}$ will be the volume of $B_{i}$, for $i=1,2$, and $A:=\R^{3}\backslash B$. Now we note that
\begin{eqnarray}\notag
\int_{\R^{3}}(1-v_{\varepsilon,\phi}(x))dx=\int_{A}(1-v_{\varepsilon,\phi}(x))dx+\int_{B}(1-v_{\varepsilon,\phi}(x))dx
\end{eqnarray}
and
\begin{eqnarray}\notag
\int_{A}(1-v_{\varepsilon,\phi}(x))dx+\int_{B}1dx=2(|\Sigma_{\varepsilon,\phi}|_{3}-V_{1})+V_{1}+V_{2}=2|\Sigma_{\varepsilon\phi}|_{3}+V_{2}-V_{1}.
\end{eqnarray}
In the forthcoming integrals, we will use the natural change of variables induced on $V_{\tau/\varepsilon}$ by the parametrization $Y_{\varepsilon}(\text{y})=\varepsilon^{-1}Y(\varepsilon\text{y})$ (see (\ref{param_Sigma})). 
The absolute value of the Jacobian determinant is $\varepsilon^{2}\big\{(z+\varepsilon^{-1})^{2}\cos(\varepsilon\text{y}_{1})+(z+\varepsilon^{-1})\varepsilon^{-1}\sqrt{2}\big\}$, thus we can see that
\begin{eqnarray}
|\Sigma_{\varepsilon,\phi}|_{3}=2\pi\varepsilon^{-1}\int_{0}^{2\pi/\varepsilon}d\text{y}_{1}\int_{-1/\varepsilon-\phi(\varepsilon\text{y}_{1})}^{0}
\varepsilon^{2}\big\{(t+\phi(\varepsilon\text{y}_{1})+\varepsilon^{-1})^{2}\cos\text{y}_{1}\\\notag
+(t+\phi(\varepsilon\text{y}_{1})+\varepsilon^{-1})\varepsilon^{-1}\sqrt{2}\big\}dz=
\varepsilon^{-3}2\pi^{2}\sqrt{2}+\varepsilon^{-2}\int_{\Sigma}\phi(\zeta)d\sigma(\zeta)\\\notag
+2\pi\varepsilon^{-1}\int_{0}^{2\pi}\phi^{2}(\vartheta)(\cos\vartheta+\sqrt{2}/2)d\vartheta+
\frac{2\pi}{3}\int_{0}^{2\pi}\phi^{3}(\vartheta)\cos(\vartheta)d\vartheta,
\label{vol_Sigma}
\end{eqnarray}
since the surface integral over $\Sigma_{\varepsilon}$ of a function $\psi$ of the variable $\text{y}_{1}$ is given by
\begin{eqnarray}
\int_{\Sigma_{\varepsilon}}\psi(y)d\sigma(y)=2\pi\varepsilon^{-1}\int_{0}^{2\pi/\varepsilon}(\cos(\varepsilon\text{y}_{1})
+\sqrt{2})\psi(\text{y}_{1})d\text{y}_{1}.
\label{int_Sigma}
\end{eqnarray}
Similarly, we can show that
\begin{eqnarray}
V_{2}-V_{1}=\label{vol_tau}\\\notag
2\pi\varepsilon^{-1}\int_{0}^{2\pi/\varepsilon}d\text{y}_{1}\int_{0}^{6+\tau/2\varepsilon}
\varepsilon^{2}\big\{(t+\phi(\varepsilon\text{y}_{1})+\varepsilon^{-1})^{2}\cos(\varepsilon\text{y}_{1})\\\notag
+(t+\phi(\varepsilon\text{y}_{1})+\varepsilon^{-1})\varepsilon^{-1}\sqrt{2}\big\}dt\\\notag
-2\pi\varepsilon^{-1}\int_{0}^{2\pi/\varepsilon}d\text{y}_{1}\int_{-6-\tau/2\varepsilon}^{0}
\varepsilon^{2}\big\{(t+\phi(\varepsilon\text{y}_{1})+\varepsilon^{-1})^{2}\cos(\varepsilon\text{y}_{1})\\\notag
+(t+\phi(\varepsilon\text{y}_{1})+\varepsilon^{-1})\varepsilon^{-1}\sqrt{2}\big\}dt=\\\notag
2\pi\varepsilon^{-1}\int_{0}^{6+\tau/2\varepsilon}tdt\int_{0}^{2\pi}\big\{2\sqrt{2}+4\varepsilon\phi(\vartheta_{1})
\cos(\vartheta_{1})\big\}d\vartheta_{1}.
\end{eqnarray}
Observing that 
\begin{eqnarray}
v_{\varepsilon,\phi}(\varepsilon\text{y}_{1},t)=\tilde{v}_{\varepsilon,\phi}(\varepsilon\text{y}_{1},t)+(1-\chi_{5}(x))(\mathbb{H}(x)
-\tilde{v}_{\varepsilon,\phi}(\varepsilon\text{y}_{1},t))
\end{eqnarray}
we compute
\begin{eqnarray}
\int_{B}v_{\varepsilon,\phi}(x)dx\label{exp_term}\\\notag
=2\pi\varepsilon^{-1}\int_{0}^{2\pi/\varepsilon}d\text{y}_{1}\int_{-6-\tau/2\varepsilon}^{6+\tau/2\varepsilon}
\varepsilon^{2}\tilde{v}_{\varepsilon,\phi}(\varepsilon\text{y}_{1},t)\big\{(t+\phi(\varepsilon\text{y}_{1})
+\varepsilon^{-1})^{2}\cos(\varepsilon\text{y}_{1})\\\notag
+(t+\phi(\varepsilon\text{y}_{1})+\varepsilon^{-1})\varepsilon^{-1}\sqrt{2}\big\}dt\\\notag
+2\pi\varepsilon^{-1}\int_{0}^{2\pi/\varepsilon}d\text{y}_{1}\int_{-6-\tau/2\varepsilon}^{6+\tau/2\varepsilon}(1-\chi_{5})
(\mathbb{H}(x)-\tilde{v}_{\varepsilon,\phi}(\varepsilon\text{y}_{1},t))
\varepsilon^{2}\big\{(t+\phi(\varepsilon\text{y}_{1})+\varepsilon^{-1})^{2}\cos(\varepsilon\text{y}_{1})\\\notag
+(t+\phi(\varepsilon\text{y}_{1})+\varepsilon^{-1})\varepsilon^{-1}\sqrt{2}\big\}dt.
\end{eqnarray}
The second integral is exponentially decreasing in $\varepsilon$, and the same is true for its Lipschitz constant. As regards the second one, exploiting the symmetry of $v_{\star}$, $\eta$ and of the domain, we can see that
\begin{eqnarray}\notag
2\pi\varepsilon^{-1}\int_{0}^{2\pi/\varepsilon}d\text{y}_{1}\int_{-6-\tau/2\varepsilon}^{6+\tau/2\varepsilon}\varepsilon^{2}
\tilde{v}_{\varepsilon,\phi}(\varepsilon\text{y}_{1},t)\big\{(t+\phi(\varepsilon\text{y}_{1})+\varepsilon^{-1})^{2}\cos(\varepsilon\text{y}_{1})\\\notag
+(t+\phi(\varepsilon\text{y}_{1})+\varepsilon^{-1})\varepsilon^{-1}\sqrt{2}\big\}dt=\\\notag
2\pi\varepsilon^{-1}\int_{0}^{6+\tau/2\varepsilon}tv_{\star}(t)dt\int_{0}^{2\pi}\big\{4\varepsilon\phi(\vartheta_{1})
\cos(\vartheta)+2\sqrt{2}\big\}d\vartheta_{1}+G^{1}_{\varepsilon}(\phi).
\label{int_v_star}
\end{eqnarray}
with $G^{1}_{\varepsilon}$ satisfying (\ref{small_G}). Thus, taking the sum of (\ref{vol_Sigma}), (\ref{vol_tau}), (\ref{exp_term}) and (\ref{int_v_star}), 
\begin{eqnarray}
\int_{\R^{3}}(1-v_{\varepsilon,\phi}(x))dx=\varepsilon^{-3}4\pi^{2}\sqrt{2}+2\varepsilon^{-2}\int_{\Sigma}\phi(\zeta)d\zeta\\\notag
+2\pi\varepsilon^{-1}\int_{0}^{6+\tau/2\varepsilon}t(1-v_{\star}(t))dt\int_{0}^{2\pi}\big\{4\varepsilon\phi(\vartheta_{1})
\cos(\vartheta_{1})+2\sqrt{2}\big\}d\vartheta_{1}+G^{2}_{\varepsilon}(\phi),
\end{eqnarray}
with $G^{2}_{\varepsilon}$ satisfying (\ref{small_G}). 

It remains to deal with the term involving $w_{\varepsilon,\phi}$.
\begin{eqnarray}\notag
\int_{\R^{3}}|w_{\varepsilon,\phi}(x)|dx=\int_{\R^{3}}|w_{\varepsilon,\phi}(x)|\varphi_{\delta}(x)\varphi_{-\delta}(x)dx\\\notag
\leq c||w_{\varepsilon,\phi}||_{C^{4,\alpha}_{\delta}(\R^{3})}\int_{\R^{3}}\varphi_{-\delta}(x)dx\leq c\varepsilon^{3}
\end{eqnarray}
and, by Propositions \ref{propaux_1} and \ref{propaux_2},
\begin{eqnarray}\notag
\bigg|\int_{\R^{3}}(w_{\varepsilon,\phi_{1}}(x)-w_{\varepsilon,\phi_{2}}(x))dx\bigg|\\\notag
\leq c||w_{\varepsilon,\phi_{1}}-w_{\varepsilon,\phi_{2}}||_{C^{4,\alpha}_{\delta}(\R^{3})}\int_{\R^{3}}\varphi_{-\delta}(x)dx\leq c\varepsilon^{3}|\phi_{1}-\phi_{2}|_{C^{4,\alpha}(\Sigma)},
\end{eqnarray}
for any $\phi_{1},\phi_{2}\in C^{4,\alpha}(\Sigma)_{s}$ satisfying $|\phi_{1}|_{C^{4,\alpha}(\Sigma)},|\phi_{2}|_{C^{4,\alpha}(\Sigma)}\leq c\varepsilon$.

\subsection{The proof of Proposition \ref{prop_bifo}}
Before giving the proof, we state a technical Lemma, in which we prove that the term $\tilde{p}_{4}$ is small enough.
\begin{lemma}
For any $\varepsilon>0$ small enough, for any $c>0$ and for any $\phi,\phi_{1},\phi_{2}$ satisfying $|\phi|_{C^{4,\alpha}(\Sigma)},|\phi_{1}|_{C^{4,\alpha}(\Sigma)},|\phi_{2}|_{C^{4,\alpha}(\Sigma)}\leq c\varepsilon$, we have
\begin{eqnarray}\notag
\begin{cases}
|\tilde{p}_{4}(\phi)|_{C^{0,\alpha}(\Sigma)}\leq\tilde{c}\varepsilon^{5}\\\notag
|\tilde{p}_{4}(\phi_{1})-\tilde{p}_{4}(\phi_{2})|_{C^{0,\alpha}(\Sigma)}\leq\tilde{c}\varepsilon^{5}|\phi_{1}-\phi_{2}|_{C^{0,\alpha}(\Sigma)},
\end{cases}
\end{eqnarray}
for some constant $\tilde{c}>0$.
\label{lemmaR}
\end{lemma}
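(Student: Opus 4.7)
The rough bound (\ref{dec_proj4}) came from $\|\chi_{4}\text{R}_{\varepsilon,\phi}(U_{\varepsilon,\phi})\|_{C^{0,\alpha}_{\delta}(\Sigma_{\varepsilon}\times\R)}\leq c\varepsilon\,\|U_{\varepsilon,\phi}\|_{C^{4,\alpha}_{\delta}}\leq c\varepsilon^{4}$. To gain the extra power of $\varepsilon$ the plan is to exploit the $t$-parity of the integrand $\chi_{4}\text{R}_{\varepsilon,\phi}(U_{\varepsilon,\phi})\,v_{\star}^{'}(t)$. Since $v_{\star}$ and $\eta$ are odd in $t$ and $W$ is even, $\tilde v_{\varepsilon,\phi}$ is odd in $t$; hence $W^{''}(\tilde v_{\varepsilon,\phi})$, $W^{''}(\tilde v_{\varepsilon,\phi})-W^{''}(v_{\star})$ and $W^{(4)}(\tilde v_{\varepsilon,\phi})$ are all $t$-even, while $W^{'''}(\tilde v_{\varepsilon,\phi})$ is $t$-odd. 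Moreover $\mathcal{L}_{\varepsilon}$ preserves $t$-parity, and $v_{\star}^{'}$ is even, so only the $t$-even part of $\text{R}_{\varepsilon,\phi}(U_{\varepsilon,\phi})$ survives the projection against $v_{\star}^{'}(t)$.

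I then decompose $U_{\varepsilon,\phi}=U_{e}+U_{o}$ with $\|U_{e}\|_{C^{4,\alpha}_{\delta}}\leq C_{2}\varepsilon^{3}$ and $\|U_{o}\|_{C^{4,\alpha}_{\delta}}\leq C_{2}\varepsilon^{4}$ (Proposition \ref{propaux_2}) and examine each summand of $\text{R}_{\varepsilon,\phi}(U)$. In $\mathcal{L}_{\varepsilon}\text{D}(U)$ and $\text{D}\mathcal{L}_{\varepsilon}(U)$, the leading piece of $\text{D}$ is $-\varepsilon H(\varepsilon y)\partial_{t}$, which reverses $t$-parity: applied to $U_{e}$ it gives an odd $O(\varepsilon^{4})$ term killed by the projection, whereas the $t$-even surviving contributions of size $\varepsilon^{5}$ come either from $-\varepsilon H\partial_{t}$ acting on $U_{o}$, or from the subleading $t$-dependent pieces of $\text{D}$ (the $-\varepsilon^{2}tH_{2}\partial_{t}$ term in the Taylor expansion of $\hat H$ and the $\varepsilon(t+\phi)\{a_{1}^{ij}\partial_{ij}+\varepsilon b_{1}^{i}\partial_{i}\}$ blocks) acting on $U_{e}$, where an explicit $t$-factor flips parity and each additional $\varepsilon$ in the coefficient delivers the remaining power. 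The pieces $\mathcal{L}_{\varepsilon}(W^{''}(\tilde v_{\varepsilon,\phi})-W^{''}(v_{\star}))U$ and $(W^{''}(\tilde v_{\varepsilon,\phi})-W^{''}(v_{\star}))\mathcal{L}_{\varepsilon}U$ are directly $O(\varepsilon^{2}\cdot\varepsilon^{3})=O(\varepsilon^{5})$ in both parities, and the quadratic block $(\text{D}+W^{''}(\tilde v_{\varepsilon,\phi})-W^{''}(v_{\star}))^{2}U$ is also $O(\varepsilon^{5})$. Finally, in $W^{'''}(\tilde v_{\varepsilon,\phi})(-\Delta\tilde v_{\varepsilon,\phi}+W^{'}(\tilde v_{\varepsilon,\phi}))U$, the factor $W^{'''}(\tilde v_{\varepsilon,\phi})$ is $t$-odd and $-\Delta\tilde v_{\varepsilon,\phi}+W^{'}(\tilde v_{\varepsilon,\phi})=\varepsilon Hv_{\star}^{'}+O(\varepsilon^{2})$ with the leading piece $t$-even; this contributes an odd $O(\varepsilon^{4})$ part from $U_{e}$ (killed by projection) and an even $O(\varepsilon^{5})$ part from $U_{o}$ together with the $O(\varepsilon^{2})$ corrections. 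Summing, the $t$-even part of $\text{R}_{\varepsilon,\phi}(U_{\varepsilon,\phi})$ is $O(\varepsilon^{5})$ in $C^{0,\alpha}_{\delta}$, which yields the first estimate.

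For the Lipschitz bound I apply the same parity accounting to $\text{R}_{\varepsilon,\phi_{1}}(U_{\varepsilon,\phi_{1}})-\text{R}_{\varepsilon,\phi_{2}}(U_{\varepsilon,\phi_{2}})$. The additional ingredient is the Lipschitz analogue $\|(U_{\varepsilon,\phi_{1}}-U_{\varepsilon,\phi_{2}})_{o}\|_{C^{4,\alpha}_{\delta}}\leq c\varepsilon^{4}|\phi_{1}-\phi_{2}|_{C^{4,\alpha}(\Sigma)}$ of the odd-part estimate, which is obtained by revisiting the proof of Proposition \ref{propaux_2}: inspecting the expansion of $F(\tilde v_{\varepsilon,\phi})$ in Section $5$, every $t$-odd term at order $\varepsilon^{3}$ carries a factor of $\phi$ or $\nabla_{\Sigma}\phi$, so its Lipschitz dependence on $\phi$ is of order $\varepsilon^{3}|\phi_{1}-\phi_{2}|$ multiplied by the smallness $|\phi_{k}|_{C^{4,\alpha}}\leq c\varepsilon$ already present on the remaining $\phi$ factor; this yields $\varepsilon^{4}|\phi_{1}-\phi_{2}|$ for the odd part of the right-hand side of the fixed-point equation for $U_{\varepsilon,\phi_{1}}-U_{\varepsilon,\phi_{2}}$, and Lemma \ref{Lemma_oe} delivers the claimed bound on the odd part of the difference. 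Combining this with the parity analysis of the previous paragraph produces the required $\varepsilon^{5}|\phi_{1}-\phi_{2}|$ Lipschitz estimate for $\tilde p_{4}$.

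The main obstacle is the careful parity bookkeeping: $\hat H(\varepsilon y,\varepsilon(t+\phi(\varepsilon y)))$ is not purely $t$-even, the factors $(t+\phi)^{j}$ carry mixed parities, and each of the coefficients $a_{1}^{ij}$, $a_{2}^{ij}$, $b_{1}^{i}$, $b_{2}^{i}$, $\overline a^{ij}$, $\overline b^{i}$ in (\ref{exp_laplt}) must be split into its $t$-even and $t$-odd components and propagated through the simple action of $\text{D}$, through the composition with $\mathcal{L}_{\varepsilon}$, and through the square $\text{D}^{2}$. Fortunately every apparent $O(\varepsilon^{4})$ $t$-even contribution is suppressed to $\varepsilon^{5}$ by either an extra $t$ factor paired with $\partial_{t}U_{e}$, an extra $\varepsilon$ from the Taylor coefficient, or by the smallness $|\phi|_{C^{4,\alpha}}\leq c\varepsilon$.
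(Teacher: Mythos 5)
Your estimate $|\tilde{p}_{4}(\phi)|_{C^{0,\alpha}(\Sigma)}\leq\tilde c\varepsilon^{5}$ is proved exactly as in the paper: you split $U_{\varepsilon,\phi}=U_{e}+U_{o}$, use $\|U_{o}\|\leq c\varepsilon^{4}$ together with the fact that every coefficient of $\text{R}_{\varepsilon,\phi}$ carries at least one power of $\varepsilon$, and observe that the order-$\varepsilon$ blocks of $\text{R}_{\varepsilon,\phi}$ applied to $U_{e}$ are odd in $t$ and hence are annihilated by the projection against the even function $v_{\star}^{'}$, while all remaining blocks have coefficients of size at least $\varepsilon^{2}$ (here the hypothesis $|\phi|_{C^{4,\alpha}}\leq c\varepsilon$ is used, as you note). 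This is the paper's argument, only written out block by block.

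The Lipschitz half, however, has a genuine gap. Your key auxiliary claim is $\|(U_{\varepsilon,\phi_{1}}-U_{\varepsilon,\phi_{2}})_{o}\|_{C^{4,\alpha}_{\delta}}\leq c\varepsilon^{4}|\phi_{1}-\phi_{2}|_{C^{4,\alpha}(\Sigma)}$, justified by saying that every $t$-odd term of order $\varepsilon^{3}$ in $F(\tilde v_{\varepsilon,\phi})$ carries a factor of $\phi$ or $\nabla_{\Sigma}\phi$, so that after differencing ``the smallness already present on the remaining $\phi$ factor'' supplies an extra $\varepsilon$. That reasoning fails for the terms that are \emph{linear} in $\phi$: for instance $\varepsilon^{3}H\Delta_{\Sigma}\phi\,v_{\star}^{''}$, $\varepsilon^{3}(2H|A|^{2}-4\tr A^{3})\phi\,v_{\star}^{''}$, $2\varepsilon^{3}(\nabla_{\Sigma}H,\nabla_{\Sigma}\phi)v_{\star}^{''}$ and $-2\varepsilon^{3}(a_{1}^{ij}\phi_{ij}+b_{1}^{i}\phi_{i})v_{\star}^{''}$. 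Differencing these in $\phi$ consumes the only $\phi$ factor and leaves a Lipschitz constant of order $\varepsilon^{3}|\phi_{1}-\phi_{2}|$, with no extra power of $\varepsilon$ available. The actual reason these terms are harmless is a cancellation you never invoke: their sum is exactly $\varepsilon^{3}L\phi\,v_{\star}^{''}$ with $L$ as in (\ref{choiceL}), and it is cancelled identically by the term $\varepsilon^{3}L\phi\,L_{\star}^{2}\eta=-\varepsilon^{3}L\phi\,v_{\star}^{''}$ coming from the correction built into $\tilde v_{\varepsilon,\phi}$ — this is precisely what $L$ was chosen for. Even after using this cancellation, the surviving odd terms quadratic in $\phi$ (e.g. $\varepsilon^{2}|\nabla_{\Sigma}\phi|^{2}W^{'''}(v_{\star})(v_{\star}^{'})^{2}$ from $T^{3}$ and $-\varepsilon^{2}d|\nabla_{\Sigma}\phi|^{2}v_{\star}^{''}$ from the correction) have Lipschitz constant of order $\varepsilon^{2}\cdot\varepsilon|\phi_{1}-\phi_{2}|=\varepsilon^{3}|\phi_{1}-\phi_{2}|$, not $\varepsilon^{4}|\phi_{1}-\phi_{2}|$, so your sketch delivers at best $\|(U_{1}-U_{2})_{o}\|\lesssim\varepsilon^{3}|\phi_{1}-\phi_{2}|$ and hence only $|\tilde p_{4}(\phi_{1})-\tilde p_{4}(\phi_{2})|\lesssim\varepsilon^{4}|\phi_{1}-\phi_{2}|$, one power short of the stated bound. (The paper's own proof of this lemma only writes out the size estimate and is silent on the Lipschitz constant, so you are trying to fill a real gap — but the argument as given does not close it; you would need to identify and exploit the $L\phi$ cancellation and then track carefully which odd contributions survive and with what $\phi$-dependence.)
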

\begin{proof}
We write $U_{\varepsilon,\phi}=(U_{\varepsilon,\phi})_{o}+(U_{\varepsilon,\phi})_{e}$. By Proposition \ref{propaux_2}, we know that $||(U_{\varepsilon,\phi})_{o}||_{C^{4,\alpha}_{\delta}(\Sigma_{\varepsilon}\times\R)}\leq c\varepsilon^{4}$, therefore $||R_{\varepsilon,\phi}((U_{\varepsilon,\phi})_{o})||_{C^{4,\alpha}_{\delta}(\Sigma_{\varepsilon}\times\R)}\leq c\varepsilon^{5}$, since all the coefficients of $\text{R}_{\varepsilon,\phi}$ are at least of order $\varepsilon$. It remains to deal with the even part $U_{e}$. We will see that all the terms of order $\varepsilon^{4}$ in the expression of $\text{R}_{\varepsilon,\phi}(U_{\varepsilon,\phi})_{e}$ will vanish after projection. This can be seen by a direct computation
\begin{eqnarray}\notag
\chi_{4}R_{\varepsilon,\phi}((U_{\varepsilon,\phi})_{e})=\varepsilon\chi_{4}\big\{ HW^{'''}(v_{\star})v_{\star}^{'}(U_{\varepsilon,\phi})_{e}-\Delta_{\Sigma_{\varepsilon}}(\partial_{t}(U_{\varepsilon,\phi})_{e}
+a_{1}^{ij}\partial_{ij}(U_{\varepsilon,\phi})_{e}t)\\\notag
+H\partial_{ttt}(U_{\varepsilon,\phi})_{e}+W^{''}(v_{\star})(H\partial_{t}(U_{\varepsilon,\phi})_{e}
+a_{1}^{ij}\partial_{ij}(U_{\varepsilon,\phi})_{e}t)\\\notag
+(H\partial_{t}(U_{\varepsilon,\phi})_{e}+a_{1}^{ij}\partial_{ij}(U_{\varepsilon,\phi})_{e}t)\mathcal{L}_{\varepsilon}(U_{\varepsilon,\phi})_{e}
+\tilde{R}_{\varepsilon,\phi}((U_{\varepsilon,\phi})_{e})\big\},
\end{eqnarray}
where $\tilde{R}_{\varepsilon,\phi}((U_{\varepsilon,\phi})_{e})$ is some linear operator with coefficients of order at least $\varepsilon^{2}$. All the terms of order $\varepsilon$ are odd, thus they vanish when we multiply by $v_{\star}^{'}$ and integrate, the other ones give rise to terms of order $\varepsilon^{5}$, being $U_{\varepsilon,\phi}$ of order $\varepsilon^{3}$.
\end{proof}
Now we are ready to prove Proposition \ref{prop_bifo}.
\begin{proof}
In view of Proposition \ref{solve_eq_bifo}, the system of equations (\ref{eqbifo_1}) and (\ref{eqbifo_2}) is equivalent to the fixed point problem
\begin{eqnarray}\notag
\phi=T_{3}(\phi):=-P\bigg(\mathcal{L}^{-1}\bigg(\varepsilon c_{\star}^{-1}\mathcal{F}_{\varepsilon,\phi}
+\varepsilon^{-4}\big\{\tilde{p}_{1}(\phi)+\tilde{p}_{2}(\phi)+\tilde{p}_{3}(\phi)+\tilde{p}_{4}(\phi)\big\},\\\notag
4\sqrt{2}\pi^{2}\varepsilon\int_{0}^{6+\tau/2\varepsilon}t(1-v_{\star}(t))dt+\varepsilon^{2}G_{\varepsilon}(\phi)\bigg)\bigg),
\end{eqnarray}
where $P:C^{4,\alpha}(\Sigma)_{s}\times\R\to C^{4,\alpha}(\Sigma)_{s}$ is the projection onto the first component. We will show that $T_{3}$ is a contraction on the ball
\begin{eqnarray}\notag
\Lambda_{3}:=\{\phi\in C^{4,\alpha}(\Sigma)_{s}:|\phi|_{C^{4,\alpha}(\Sigma)}<C_{3}\varepsilon\},
\end{eqnarray}
provided $C_{3}$ is large enough.

Using once again the same estimates as in the proof of Proposition \ref{propaux_2} and the fact that Lipschitzianity of $U$ with respect to $\phi$, we can see that $\tilde{p}_{1}$ and $\tilde{p}_{3}$ are exponentially small in $\varepsilon$, that is they satisfy, for instance
\begin{eqnarray}\notag
\begin{cases}
|\tilde{p}_{1}(\phi)|_{C^{0,\alpha}(\Sigma)}\leq ce^{-a/\varepsilon}\\\notag
|\tilde{p}_{1}(\phi_{1})-\tilde{p}_{1}(\phi_{2})|_{C^{0,\alpha}(\Sigma)}\leq ce^{-a/\varepsilon}|\phi_{1}-\phi_{2}|_{C^{4,\alpha}(\Sigma)},
\end{cases}
\end{eqnarray}
for any $\phi,\phi_{1},\phi_{2}\in\Lambda_{3}$. Similarly, by (\ref{quadratic_w}), we can see that
\begin{eqnarray}\notag
\begin{cases}
|\tilde{p}_{2}(\phi)|_{C^{0,\alpha}(\Sigma)}\leq c\varepsilon^{6}\\\notag
|\tilde{p}_{2}(\phi_{1})-\tilde{p}_{2}(\phi_{2})|_{C^{0,\alpha}(\Sigma)}\leq c\varepsilon^{6}|\phi_{1}-\phi_{2}|_{C^{4,\alpha}(\Sigma)}.
\end{cases}
\end{eqnarray}
The term $\varepsilon c_{\star}^{-1}\mathcal{F}_{\varepsilon,\phi}$ is small according to Proposition \ref{solve_eq_bifo}. 
The most difficult term is the one involving $\text{R}_{\varepsilon,\phi}$, since there are some coefficients of order $\varepsilon$ and $U$ is just of order $\varepsilon^{3}$. However, we verified in Lemma \ref{lemmaR} that these terms do not give rise to terms of order $\varepsilon^{4}$ after projection, thanks to the symmetries.

The second component can be treated in a similar way. In fact
\begin{eqnarray}\notag
4\sqrt{2}\pi^{2}\varepsilon\int_{0}^{6+\tau/2\varepsilon}t(1-v_{\star}(t))dt\leq c\varepsilon
\end{eqnarray}
and it is independent of $\phi$. To conclude, $\varepsilon^{2}G_{\varepsilon,\phi}$ is small according to Proposition \ref{prop_int_v}. In conclusion, $T_{3}$ is a contraction of the ball $\lambda_{3}$, provided $C_{3}$ is large enough.
\end{proof}

\end{document}